\documentclass[11pt]{article}

\usepackage[T1]{fontenc}
\usepackage[utf8]{inputenc}
\usepackage{lmodern}
\usepackage[a4paper,margin=30mm]{geometry}
\usepackage{microtype}
\usepackage{amsmath,amssymb,amsthm,mathtools}
\usepackage{enumitem}
\usepackage{xcolor}
\usepackage[numbers]{natbib}
\usepackage[hidelinks]{hyperref}
\hypersetup{
  pdftitle={General Constructions of Permutation-Fair Dice},
  pdfauthor={Tomas Gavenciak; Bernhard Haeupler; Vaclav Rozhon},
  pdfsubject={Constructions, optimal individual sizes, and pointwise approximation for permutation-fair dice},
  pdfkeywords={permutation-fair dice, permutation-fair strings, Hahn polynomials, positive quadrature, Chebyshev-type quadrature, approximate fairness}
}

\newtheorem{theorem}{Theorem}[section]
\newtheorem{lemma}[theorem]{Lemma}
\newtheorem{proposition}[theorem]{Proposition}
\newtheorem{corollary}[theorem]{Corollary}
\newtheorem{claim}[theorem]{Claim}

\theoremstyle{definition}
\newtheorem{definition}[theorem]{Definition}
\theoremstyle{remark}
\newtheorem{remark}[theorem]{Remark}

\newcommand{\N}{\mathbb{N}}

\newcommand{\PiSet}{\mathop{\Pi}}
\newcommand{\fall}[2]{#1^{\underline{#2}}}

\newcommand{\concat}{\mathbin{\circ}}
\DeclareMathOperator{\lcm}{lcm}

\definecolor{dieRed}{HTML}{C8444F}
\definecolor{dieGreen}{HTML}{23865A}
\definecolor{dieBlue}{HTML}{326EB3}
\newcommand{\dietile}[2]{%
  \ensuremath{%
    \vcenter{\hbox{%
      \begingroup
      \setlength{\fboxsep}{0pt}%
      \colorbox{#1}{%
        \makebox[1.35em][c]{%
          \rule[-0.22em]{0pt}{1.35em}%
          \color{white}\sffamily\bfseries #2%
        }%
      }%
      \endgroup
    }}\mkern1mu%
  }%
}
\DeclareRobustCommand{\dieOne}{\dietile{dieRed}{1}}
\DeclareRobustCommand{\dieTwo}{\dietile{dieGreen}{2}}
\DeclareRobustCommand{\dieThree}{\dietile{dieBlue}{3}}

\title{General Constructions of Permutation-Fair Dice}
\author{%
  Tom\'a\v{s} Gaven\v{c}iak\\
  \small Centre for Theoretical Study\\[-1mm]
  \small Charles University, Prague\\[-1mm]
  \small\texttt{gavenciak@cts.cuni.cz}
  \and
  Bernhard Haeupler\\
  \small ETH Zurich\\[-1mm]
  \small\texttt{bernhard.haeupler@inf.ethz.ch}
  \and
  V\'aclav Rozho\v{n}\\
  \small Faculty of Mathematics and Physics\\[-1mm]
  \small Charles University, Prague\\[-1mm]
  \small\texttt{vaclavrozhon@gmail.com}}
\date{\today}

\begin{document}

\maketitle

\begin{abstract}
Suppose that each of \(n\) players rolls a different die and the players are
ordered by their rolls.  Can the dice use disjoint labels such that
ties are impossible, while all \(n!\) orders remain equally likely?

We give two general constructions of such dice and solve some related problems.
\end{abstract}

\section{Introduction}

There are \(n\) players, and player \(i\) is assigned a die with
\(k_i\geq1\) faces.  Across all dice, the faces receive the distinct labels
\(1,2,\ldots,\sum_i k_i\), each label appearing exactly once.  The players
roll independently and are ordered by their labels.  We call the dice
\emph{permutation-fair} if the resulting order is uniform on the \(n!\)
permutations.

The problem was raised by James Ernest and has been studied extensively for
small numbers of players; see, for example,
\cite{harshbarger-web,harshbarger2012,ford2023}.  Permutation-fair dice are
known to exist for every \(n\).  We present our elementary construction,
give a second and more efficient construction, and prove bounds on the
number of faces needed.

\subsection{Our results}\label{sec:results}

\paragraph{A three-die example.}
For three dice, consider the string
\[
  s=
  \dieTwo\dieOne\dieThree\dieOne
  \dieTwo\dieTwo\dieTwo\dieTwo
  \dieOne\dieThree\dieOne\dieTwo.
\]
The \(j\)-th box specifies which die owns the face labeled \(j\); red,
green, and blue represent dice \(1,2,3\), respectively.  Thus the string
represents the following three dice:
\[
  \dieOne:\{2,4,9,11\},\qquad
  \dieTwo:\{1,5,6,7,8,12\},\qquad
  \dieThree:\{3,10\}.
\]
A joint roll chooses one occurrence of each box color, and their
left-to-right order is the order of the rolled labels.  Each of the six
possible orders occurs exactly eight times as a subsequence of \(s\).  Since
there are \(4\cdot6\cdot2=48\) possible rolls, every order has probability
\(8/48=1/6\).

More generally, read all face labels in increasing order and record the owner
of each face.  This turns any collection of dice into a string \(s\), in which
symbol \(i\) occurs \(k_i\) times.  If \(C_\pi(s)\) denotes the number of
occurrences of a permutation \(\pi\) as a subsequence of \(s\), then
\[
  \Pr[\text{order}=\pi]
  =\frac{C_\pi(s)}{\prod_{i=1}^n k_i}.
\]
Consequently, the dice are permutation-fair exactly when all \(n!\)
permutations occur equally often in \(s\).  This correspondence replaces a
probabilistic question about dice by a combinatorial question about strings.

\paragraph{The elementary construction.}
Our first construction\footnote{We first presented this construction in our
YouTube video~\cite{polylog-video}, later cited by Purcell~\cite{purcell}.}
starts from \(12\cdots n\) and repeatedly
concatenates all \(n!\) relabelings of the string obtained so far.  For
\(n=3\), start with \(u_1\) and concatenate its six relabelings to obtain
\(u_2\):
\[
  \begin{aligned}
  u_1&=\dieOne\dieTwo\dieThree,\\[.4em]
  u_2&=
  \dieOne\dieTwo\dieThree\mid
  \dieOne\dieThree\dieTwo\mid
  \dieTwo\dieOne\dieThree\mid
  \dieTwo\dieThree\dieOne\mid
  \dieThree\dieOne\dieTwo\mid
  \dieThree\dieTwo\dieOne.
  \end{aligned}
\]
The string \(u_1\) has equal counts of all one-symbol patterns, and \(u_2\)
has equal counts of all two-symbol patterns.  Next, fix any
ordering \(\tau_1,\ldots,\tau_6\) of the six permutations of the symbols.
Write \(u_2^\tau\) for the string obtained by replacing each symbol \(i\)
by \(\tau(i)\), and let \(\concat\) denote concatenation.  Concatenating the
six relabelings in that order gives
\[
  u_3=u_2^{\tau_1}\concat u_2^{\tau_2}\concat\cdots\concat u_2^{\tau_6}.
\]
This string has equal counts of all three-symbol patterns, and hence it is
permutation-fair.  In general, each round enforces symmetry for patterns one
symbol longer, so \(n-1\) rounds produce a permutation-fair string.

\begin{theorem}[Elementary construction
  (informal version of Theorem~\ref{thm:first-size})]
For every \(n\), there are \(n\) equal-sided permutation-fair dice with
\(2^{O(n^2\log n)}\) faces each.
\end{theorem}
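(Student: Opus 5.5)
We prove by induction on \(m\) that the string \(u_m\) from the elementary construction is \emph{\(m\)-symmetric}: for every sequence \((a_1,\ldots,a_m)\) of \(m\) distinct symbols from \([n]\), the number \(C_{a_1\cdots a_m}(u_m)\) of occurrences of \(a_1\cdots a_m\) as a subsequence of \(u_m\) is the same. Here \(u_1=12\cdots n\) and \(u_{m+1}=u_m^{\tau_1}\concat\cdots\concat u_m^{\tau_{n!}}\). Then \(u_n\) is \(n\)-symmetric, which is exactly permutation-fairness. Moreover, each symbol occurs equally often in \(u_n\), because this holds for \(u_1\) and is preserved by concatenating all relabelings. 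So the dice have equal numbers of faces. The length satisfies \(|u_{m+1}|=n!\,|u_m|\), so \(|u_n|=n\cdot(n!)^{n-1}\), and each die has \((n!)^{n-1}=2^{O(n^2\log n)}\) faces.

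For the induction step, assume \(u=u_m\) is \(m\)-symmetric and let \(w=u^{\tau_1}\concat\cdots\concat u^{\tau_N}\) with \(N=n!\). First, relabeling is compatible with pattern counts: \(C_{a_1\cdots a_j}(u^\tau)=C_{\tau^{-1}(a_1)\cdots\tau^{-1}(a_j)}(u)\). Every occurrence of a pattern \(p=a_1\cdots a_{m+1}\) in \(w\) splits uniquely according to which blocks its letters fall into. This splits \(p\) into consecutive nonempty pieces \(p=p_1p_2\cdots p_r\), placed in blocks \(b_1<\cdots<b_r\). The count of such occurrences is \(\prod_t C_{p_t}(u^{\tau_{b_t}})\). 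We sum over all splittings and all block choices.

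The key observation is that whenever \(r\ge 2\), each piece has length at most \(m\). A piece of length \(j\le m\) is a sequence of distinct symbols, and by \(m\)-symmetry (which implies \(j\)-symmetry for \(j\le m\), by summing over extensions) its count in any relabeling of \(u\) is a constant \(c_j\) depending only on \(j\). Hence the contribution of the terms with \(r\ge2\) depends only on the piece lengths and the block positions, not on \(p\).

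The only \(p\)-dependent part is \(r=1\), where all of \(p\) lies inside one block. Its contribution is \(\sum_{b}C_p(u^{\tau_b})=\sum_{\tau\in S_n}C_{\tau^{-1}(p)}(u)\). As \(\tau\) runs over \(S_n\), \(\tau^{-1}(p)\) runs over every sequence of \(m+1\) distinct symbols exactly \((n-m-1)!\) times. So this sum is the same for all \(p\), and \(w\) is \((m+1)\)-symmetric.

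The main obstacle is purely one of bookkeeping. We must make sure the splitting argument counts each occurrence exactly once, and verify that lower-order symmetry is inherited, i.e.\ that \(m\)-symmetry implies \(j\)-symmetry for \(j<m\). The latter holds because \(C_q(u)\cdot\binom{|u|-|q|}{\cdot}\)-type relations are not needed. Instead, for \(j<m\) one uses that every symbol appears equally often and sums \(C_{q\,a}\) over appended symbols. Alternatively, one can simply strengthen the induction hypothesis to ``\(u_m\) is \(j\)-symmetric for all \(j\le m\)''. Lower \(j\) are then handled by the same splitting argument, with the hypothesis at \(j\le m\) available directly, and \(j=1\) holds trivially from \(u_1\).
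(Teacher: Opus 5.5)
Your argument is correct. In the version with the strengthened hypothesis, that \(u_m\) is \(j\)-symmetric for every \(j\leq m\), it is exactly the paper's proof: Lemma~\ref{lem:symmetrize} splits occurrences by block occupancy, uses \((\leq k,n)\)-fairness for pieces of length at most \(k\), and averages over all \(n!\) relabelings for the single-block term. One caution on your other route: summing \(C_{qa}\) over \emph{appended} symbols does not recover \(C_q\); you must instead insert a fixed letter \(a\notin q\) at all \(|q|+1\) positions, whose counts sum to \(C_q\,c_a\) as in Lemma~\ref{lem:partial} and so give \(j\)-symmetry under equal multiplicities, but the strengthened hypothesis makes this step unnecessary.
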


Using all \(n!\) relabelings at every step makes the strings grow quickly.
The second construction gives smaller dice.

\paragraph{The Hahn-polynomial construction.}
The second construction adds one symbol at a time:
\[
  v_1=\dieOne,
  \qquad
  v_2=
  \dieTwo\mid
  v_1\mid
  \dieTwo\dieTwo\dieTwo\dieTwo\mid
  v_1\mid
  \dieTwo,
\]
\[
  v_3=
  \dieThree\dieThree\mid v_2\mid
  \dieThree\dieThree\dieThree\dieThree
  \dieThree\dieThree\dieThree\dieThree
  \mid v_2\mid
  \dieThree\dieThree.
\]
Each string is obtained by placing copies of the previous string between
runs of the new symbol.  The final string \(v_3\) has \(4,12,12\)
occurrences of symbols \(1,2,3\), respectively.  Although the dice have
different numbers of faces, each full permutation occurs exactly \(96\)
times, so the string is permutation-fair.

The run lengths are dictated by a quadrature condition: after normalization,
they must integrate the Bernstein polynomials~\cite{lorentz1986} of the
current degree exactly.  Above, both \((1,4,1)/6\) and \((2,8,2)/12\) are
Simpson's rule.  Using Hahn polynomials and estimates of Wilson and
Zaremba~\cite{wilson-hahn,zaremba1975}, we find suitable positive rational
rules with small common denominators for every \(n\), which gives a sharper
asymptotic bound.

\begin{theorem}[Hahn construction
  (informal version of Theorem~\ref{thm:main})]
For every \(n\), there are \(n\) equal-sided permutation-fair dice with
\(2^{O(n\log n)}\) faces each.
\end{theorem}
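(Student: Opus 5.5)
The plan is to formalize the recursive ``runs of a new symbol around copies of the previous string'' construction. We reduce permutation-fairness to a quadrature identity for the run lengths, produce suitable positive integer run lengths of polynomial size from Hahn polynomials, and equalize the numbers of faces at the end.

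\emph{Step 1 (reduction).} Let \(v_{m-1}\) be permutation-fair on symbols \(1,\ldots,m-1\). Summing over the positions of the omitted symbols shows that every subset of those symbols also has equal counts of all of its orderings. Set
\[
  v_m = m^{a_0}\concat v_{m-1}\concat m^{a_1}\concat\cdots\concat v_{m-1}\concat m^{a_r},
\]
with \(r\) copies of \(v_{m-1}\). Fix a pattern in which \(m\) is preceded by \(j\) other symbols and the occurrence of \(m\) lies in run \(t\). Then the first \(j\) symbols must come from copies \(\le t\) and the remaining \(m-1-j\) from copies \(>t\). Expanding over which copy each symbol falls in, and using the subpattern symmetry, this count is independent of the relative order of the other symbols. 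It equals \(a_t\,F_j(t)\), where \(F_j\) is a fixed polynomial of degree \(m-1\) in \(t\), essentially a discrete Bernstein basis polynomial \(\binom{t}{\cdot}\binom{r-t}{\cdot}\) weighted by the subset counts. So \(v_m\) is permutation-fair iff \(\sum_t a_t F_j(t)\) is the same for all \(j=0,\ldots,m-1\). This is a set of \(m-1\) linear conditions saying that the weights \((a_t)\) reproduce a fixed functional on polynomials of degree \(<m\). I will verify this against the Simpson examples in the introduction.

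\emph{Step 2 (quadrature, the main obstacle).} We need positive integers \(a_0,\ldots,a_r\) satisfying these conditions, with \(r\) and \(\sum_t a_t\) bounded by \(m^{O(1)}\). First I would take \(r=Cm^2\) and start from a smooth positive reference weight on the nodes \(0,\ldots,r\). This weight is exact for the target functional up to an error supported on a few Hahn polynomials \(Q_0,\ldots,Q_{m-1}\), which are orthogonal for the uniform measure on \(\{0,\ldots,r\}\). The correction is a combination of these \(Q_i\). The Wilson and Zaremba sup-norm estimates for Hahn polynomials on \(\{0,\ldots,r\}\) should show that, with \(r\gg m^2\), the correction is uniformly smaller than the reference weight, so all weights stay positive with margin \(\Omega(1/r)\). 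To obtain rationals with small denominator, I round all weights except those at \(m\) well-spread nodes to multiples of \(1/D\), and then solve exactly for the remaining \(m\) weights. The Vandermonde-type system on spread nodes is well conditioned, and Cramer's rule bounds the denominators. Keeping the perturbation below the positivity margin forces \(D=m^{O(1)}\). Clearing denominators gives integer \(a_t\) with \(\sum_t a_t\le m^{O(1)}\).

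\emph{Step 3 (size and equalization).} With \(L_m=|v_m|\) we have \(L_m=\sum_t a_t + r L_{m-1}\le m^{O(1)}(1+L_{m-1})\), so \(L_n=n^{O(n)}=2^{O(n\log n)}\). To make the dice equal-sided, replace every occurrence of symbol \(i\) by \(c_i\) consecutive copies of \(i\). This multiplies every \(C_\pi\) by \(\prod_i c_i\), so fairness is preserved. We choose \(c_i=K/k_i\), where \(K=\lcm(k_1,\ldots,k_n)\le\prod_i k_i\). Then each die has \(K\) faces. Since each \(k_i\le L_n\), we get \(K\le L_n^{\,n}\), which is only \(2^{O(n^2\log n)}\). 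So I would instead arrange equal multiplicities at each level. One option is to choose the weights so that \(\sum_t a_t\) is a multiple of the current symbol counts. Another is to scale all counts by a common factor of size \(m^{O(1)}\) at each level, so that \(K\) remains \(2^{O(n\log n)}\). This bookkeeping is secondary to Step 2.
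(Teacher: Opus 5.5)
Your Step 1 and the positivity half of Step 2 follow the paper's route. One correction to Step 1: restriction fairness of \(v_{m-1}^t\) gives the count \(a_t\,t^j(r-t)^{m-1-j}\prod_i c_i/(j!(m-1-j)!)\). These are powers, not binomials, so the target functional is exactly \(\int_0^1\) at the nodes \(t/r\), which is what Wilson's integrals \(I_k\) control. Positivity on an \(O(m^2)\)-point grid then needs Zaremba's nodal bound \(|Q_k(i)|\le1\) together with Wilson's integral estimate \(|I_k|<3/2\) for the coefficients.

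\textbf{The gap is arithmetic.} Your claim that rounding and then solving for \(m\) pivot weights yields a common denominator \(D=m^{O(1)}\), and hence integers with \(\sum_t a_t\le m^{O(1)}\), is false for any such rule. Suppose positive integers \(a_t\) with sum \(K\) integrate degrees \(\le m-1\) exactly at the nodes \(t/r\). Then \(\sum_t a_t t^k=Kr^k/(k+1)\), which forces \((k+1)\mid Kr^k\). Taking \(k=p-1\) shows that every prime \(p\le m\) not dividing \(r\) divides \(K\), so \(K\ge 2^{\Omega(m)}\) when \(r=m^{O(1)}\). Cramer's rule shows the same thing: the pivot weights carry denominators such as \(\lcm(1,\ldots,m)\) and \(\prod_{j\ne i}|s_i-s_j|\). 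Good conditioning controls the size of the correction, not its denominator.

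\textbf{Consequence for Step 3.} Since \(\sum_t a_t\) enters your length recursion additively, the total length survives, but Step 3 does not, and it is not secondary. Your option of scaling by \(m^{O(1)}\) per level was premised on \(D=m^{O(1)}\). With a fresh rule at every level, keeping equal multiplicities requires the level-\(m\) denominator to divide \(rF_{m-1}\) up to a small factor. Nothing in your construction provides that: the pivots are chosen for conditioning, and the factors \(\prod_{j\ne i}|s_i-s_j|\) change from level to level. Post hoc equalization gives only \(2^{O(n^2\log n)}\), as you noted.

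\textbf{The missing idea.} Use a single rule, exact through degree \(n\) on the fixed grid \(\{0,\ldots,N\}\) with \(N=n(n+2)\), at every level.
\begin{itemize}
  \item The Hahn weights \(W_i=\sum_k (I_k/H_k)Q_k(i)\) are already rational, so no rounding is needed. The explicit \({}_3F_2\) expansion of \(Q_k\), term-by-term integration for \(I_k\), and the norm formula for \(H_k\) give the common denominator \(M=(N+1)\lcm(1,\ldots,n+1)\,\fall{N}{n}\,(N+2)\cdots(N+n+1)=2^{O(n\log n)}\).
  \item Start from \(1^M\) and put \(F_rW_i\) copies of the new symbol in gap \(i\). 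Since \(F_r=MN^{r-1}\) is divisible by \(M\), these are positive integers.
  \item They sum to \(NF_r\), which is exactly the new count of every old symbol. Equal composition is therefore automatic, the denominator is paid only once, and \(F_n=MN^{n-1}=2^{O(n\log n)}\).
\end{itemize}
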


\paragraph{Lower bounds.}
Restricting a fair string to any \(p\) symbols, where \(p\) is prime, forces
\(p\) to divide the product of their multiplicities.  Applying this
observation simultaneously for many primes gives the following result.

\begin{theorem}[Lower bound
  (informal version of Theorem~\ref{thm:many-large})]
Every permutation-fair collection of \(n\) dice contains a die with
\(2^{\Omega(n)}\) faces.
\end{theorem}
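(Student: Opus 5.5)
The plan is to derive a divisibility constraint for every prime-size subset of symbols, and then combine these constraints over many primes at once. First I will check that permutation-fairness is inherited by restrictions. If \(s\) is fair on \(n\) symbols and \(S\) is a set of \(m\) symbols, then deleting all other symbols gives a string \(s|_S\). Each ordering \(\sigma\) of \(S\) occurs in \(s|_S\) exactly as often as \(\sigma\) occurs as a subsequence of \(s\). That count is the sum of \(C_\pi(s)\) over the \(n!/m!\) permutations \(\pi\) that restrict to \(\sigma\). Hence all \(m!\) orderings of \(S\) occur equally often. Equivalently, the sub-collection of dice indexed by \(S\) is permutation-fair. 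So each ordering has probability \(1/m!\), and its count is \(\prod_{i\in S}k_i/m!\). This count must be an integer, so \(m!\mid\prod_{i\in S}k_i\).

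Next I specialize to a prime \(m=p\). Then \(p\) divides \(\prod_{i\in S}k_i\), so some die in \(S\) has \(p\mid k_i\). This holds for every \(p\)-subset \(S\). Therefore fewer than \(p\) dice have \(k_i\) not divisible by \(p\). In other words, at least \(n-p+1\) dice have \(p\mid k_i\).

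Now I combine primes. Let \(P\) be the set of primes \(p\le n/2\). For each \(p\in P\), at least \(n-p+1\ge n/2\) indices \(i\) satisfy \(p\mid k_i\). Count the pairs \((i,p)\) with \(p\mid k_i\): there are at least \(|P|\cdot n/2\) of them. By averaging, some die \(i\) is divisible by at least \(|P|/2\) primes from \(P\). That die's \(k_i\) is then at least the product of its \(|P|/2\) smallest primes, and this product is at least \(2^{|P|/2}\). This only gives \(2^{\Omega(n/\log n)}\), so I need to sharpen it.

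The sharper step weights each prime by \(\log p\) instead of counting primes. For every \(p\in P\), the set of \(i\) with \(p\mid k_i\) has size at least \(n/2\). Hence
\[
\sum_i \log k_i\;\ge\;\sum_i\sum_{p\in P,\,p\mid k_i}\log p\;\ge\;\frac n2\sum_{p\le n/2}\log p=\frac n2\,\vartheta(n/2).
\]
By Chebyshev's bound, \(\vartheta(x)=\Theta(x)\). So the average of \(\log k_i\) is \(\Omega(n)\), and hence \(\max_i k_i\ge 2^{\Omega(n)}\). This is also where the main obstacle lies. The naive counting of distinct primes loses a \(\log n\) factor, and the fix is exactly this \(\log p\) weighting together with Chebyshev's estimate \(\vartheta(x)\ge cx\), which I would quote as a standard fact. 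The argument in fact shows more: the geometric mean of the \(k_i\) is \(2^{\Omega(n)}\). With slightly more care, restricting to primes \(p\le \epsilon n\), it should give a constant fraction of dice with \(2^{\Omega(n)}\) faces, which is presumably the formal version in Theorem~\ref{thm:many-large}.
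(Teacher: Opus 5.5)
Your argument is correct and is essentially the paper's proof of Lemma~\ref{lem:exp-lower}. In both, restricting to any \(p\)-subset forces \(p\mid k_i\) for all but at most \(p-1\) dice. Weighting by \(\log p\) over the primes \(p\leq n/2\) and using the primorial (Chebyshev) estimate then gives \(\prod_i k_i=2^{\Omega(n^2)}\), hence \(\max_i k_i\geq 2^{\Omega(n)}\).

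There is one small slip in your restriction step: \(\sum_{\pi|_S=\sigma}C_\pi(s)\) equals \(C_\sigma(s)\prod_{\ell\notin S}k_\ell\), not \(C_\sigma(s)\). This factor does not depend on \(\sigma\), so your conclusion that all orderings of \(S\) occur equally often still holds.
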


Thus, if \(f(n)\) is the smallest possible value of the largest die in a
permutation-fair collection, then
\[
  2^{\Omega(n)}\leq f(n)\leq2^{O(n\log n)}.
\]
The precise lower bound is stronger: for every fixed
\(0<\varepsilon<1\), it forces \(2^{\Omega(\varepsilon n)}\) faces on at
least \((1-\varepsilon)n\) dice.

\paragraph{The smallest individual die.}
The exponential bound applies to most dice, but a prescribed die can be much
smaller.  Let \(g(n)\) be the minimum possible number of faces of one die
in a permutation-fair family of \(n\) dice.

\begin{theorem}[Optimal individual size
  (informal version of Theorem~\ref{thm:individual-size})]
We have \(g(n)=\Theta(n^2)\).
\end{theorem}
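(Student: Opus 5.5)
Two bounds are needed: every permutation-fair family of \(n\) dice has every die of size \(\Omega(n^2)\), and some family has a die of size \(O(n^2)\).

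\textbf{Lower bound.} Fix a die, say symbol \(a\), with multiplicity \(k_a\). Restrict the string \(s\) to \(a\) and one other symbol \(b\). A fair \(n\)-symbol string restricts to a fair string on any subset of symbols, since each sub-pattern count is a sum of full-pattern counts with equal multiplicities. Hence \(C_{ab}=C_{ba}=k_ak_b/2\). For each occurrence of \(a\), let \(x_b\in\{0,\dots,k_b\}\) be the number of \(b\)'s preceding it. Then \(\sum_{\text{occ of }a} x_b=k_ak_b/2\), so the average position of the \(a\)'s relative to \(b\) is exactly one half.

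Now use triples \(\{a,b,c\}\), or better the full fairness. Consider the \(n\) relative "times" \(t_i\in[0,1]\) of a uniformly random occurrence of each die. Fairness says the induced order is uniform, so die \(a\) is equally likely to be in each of the \(n\) ranks. Condition on which face of \(a\) is rolled. Each of the \(k_a\) faces contributes a rank distribution that is Poisson-binomial, with parameters given by the fractions of each other die below that face. The mixture of these \(k_a\) Poisson-binomials must be uniform on \(\{0,\dots,n-1\}\). A Poisson-binomial with \(n-1\) trials has variance at most \((n-1)/4\), so each component is concentrated in a window of width \(O(\sqrt n)\) around its mean. Uniform mass on the ranks near \(0\) and near \(n-1\) cannot come from a few components. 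The sharper claim is that \(k_a\) components, each with standard deviation \(\le\sqrt n/2\), mixing to a distribution with mass exactly \(1/n\) at every rank, need \(k_a=\Omega(n^2)\).

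I would make this quantitative via the extreme rank. The mass at rank \(0\) is \(\frac1{k_a}\sum_j\prod_{b}(1-p_{jb})=1/n\). Since fairness is exact, I would instead exploit divisibility or an exact moment identity. Matching the first few moments of the uniform distribution gives a quadrature-like constraint. A positive quadrature with \(k_a\) nodes cannot reproduce a distribution whose ranks spread over \(n\) values unless each node has small variance relative to its spacing. I expect this counting step to be the main obstacle. I would try first a second-moment/Chebyshev argument: the component means must cover \([0,n-1]\) with exact uniform weights \(1/k_a\), and the mass at rank exactly \(0\) is \(1/n\). Only components with mean \(O(\sqrt n)\) contribute non-negligibly there, and each contributes at most \(1/k_a\). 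Refining this to force \(k_a\gtrsim n^2\) likely requires the exact uniform probabilities at ranks \(0\) and \(1\) jointly, e.g. a ratio argument showing each contributing face gives rank-\(1\)/rank-\(0\) ratio inconsistent with \(1\) unless faces are spread on scale \(1/n^2\).

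\textbf{Upper bound.} Take the fair string \(w\) on \(n-1\) symbols from Theorem~\ref{thm:main}, made equal-sided, and insert the new symbol \(a\) by the one-symbol-at-a-time scheme of the Hahn construction, but with the roles reversed. Write \(s=w_0\,a\,w_1\,a\cdots a\,w_m\), where the blocks \(w_j\) are concatenations of copies of \(w\). Fairness then reduces to a positive quadrature condition on the placement fractions of the \(a\)'s against the Bernstein polynomials of degree \(n-1\). Choosing \(a\)'s positions as nodes of a Chebyshev-type equal-weight rule (each face of \(a\) carries weight \(1/k_a\)) for degree \(n-1\) needs \(O(n^2)\) nodes by Bernstein's classical bound for equal-weight quadrature on \([-1,1]\) in the appropriate weight. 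Node positions must be rational, realized by the numbers of copies of \(w\) between consecutive \(a\)'s. This can be fixed by perturbing and absorbing the discrepancy into the other dice's weights via the Hahn rational rules.
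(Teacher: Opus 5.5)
\textbf{Lower bound.} This part is not a proof, and the route you settle on cannot be completed. After the pair identity, the only information you use is the rank law of die $a$: the mixture over its faces $q$ of Poisson-binomials with parameters $p_b(q)$ must be uniform on $\{0,\ldots,n-1\}$. Your ``sharper claim'' is false. Take the string $a\,b_1\,a\,b_2\,a\cdots a\,b_{n-1}\,a$, with one face for each other die. Die $a$ has only $n$ faces, every component is a point mass (all $p_b(q)\in\{0,1\}$), and the rank of $a$ is exactly uniform. So neither concentration of the components nor the exact masses at ranks $0$ and $1$ can force more than about $n$ faces.

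The rank law sees only symmetric functions of $p(q)$. The missing input is the full family of non-symmetric identities $\frac1K\sum_q\prod_{i\in S}p_i(q)=\frac1{|S|+1}$ for every subset $S$ of the other dice. Equivalently, $\mathbb{E}_wF(p)=\int_0^1F(t,\ldots,t)\,dt$ for every multi-affine $F$, and this must be combined with the coordinatewise monotonicity of $p(q)$ in $q$. The paper uses monotonicity to pair coordinates with thresholds (Lemma~\ref{individual-paired-thresholds}). Products of paired factors are then nonnegative on every row, while their diagonal restrictions vanish at Gaussian, resp.\ right Radau, nodes. Exactness gives first $p_i(K)\ge1-\pi^2/n^2$ and then $1/K=w_K\le200/(n-1)^2$.

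\textbf{Upper bound.} Your reduction is correct. Take $N$ copies of $w$ in total, and let $x_q\in\frac1N\mathbb{Z}$ be the fraction of copies preceding the $q$-th face of $a$. By the count in Lemma~\ref{lem:insertion}, with $a$ as the inserted symbol, fairness of $w_0\,a\,w_1\cdots a\,w_K$ reduces to $\frac1K\sum_qb_{n-1,j}(x_q)=\frac1n$ for all $j$. But this is an exact equal-weight rule with rational nodes, and that is where the whole difficulty lies. Bernstein's (or Gilboa--Peled's) nodes are real and generically irrational, and rounding them destroys exactness.

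Your proposed fix is not a mechanism. Inside this framework all other dice share one cumulative distribution and have no free weights. The Hahn rules are also the wrong tool: they fix the nodes and vary the weights, whereas here the weights are forced to be $1/K$ and it is the nodes that are irrational. Using them would put unequal numbers of $a$-faces in the gaps, so $K$ would have to clear their denominators, for which only the bound $2^{O(n\log n)}$ is available.

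Correcting through the other dice is indeed the paper's idea, but it requires leaving the insertion framework:
\begin{itemize}
\item the other dice receive different piecewise-constant densities $f_j=1+\sum_{q\in Q_j}\delta_{j,q}h_{q,t_q}$, built from local bumps with vanishing moments through degree $n-2$ and the one-sided identities~\eqref{individual-bump-moments};
\item one must show (Lemma~\ref{individual-order-response}) that every full-order probability, including the relative order of the other dice, equals $\frac1K\sum_qp_k(t_q)+B(p_k')$ for a single functional $B$; this needs disjoint correction sets $Q_j$ (hence $K\ge(n-1)^2$) and Vandermonde solves making all $B_j$ equal;
\item the continuous model must then be discretized by fair blocks on each interval, via Claim~\ref{clm:restriction-blowup}, without changing $K$.
\end{itemize}
None of this appears in your sketch. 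An exact rational equal-weight rule with $O(n^2)$ nodes would close your reduction directly, but that is a genuinely arithmetic problem, not a consequence of perturbation.
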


\paragraph{Weaker and approximate notions.}
Sections~\ref{sec:weaker} and~\ref{sec:approximate} contain three further results.
Dice are \emph{go-first fair} if every player is equally likely to roll the
smallest label, and \emph{place fair} if every player is equally likely to
occupy each rank.  Here \emph{full-set} means that fairness is required when
all \(n\) dice are rolled, without imposing it on every subset of the dice.

\begin{enumerate}[label=\textup{(\roman*)},leftmargin=*,itemsep=.45em]
  \item \textbf{Go-first fairness
    (informal version of Theorem~\ref{thm:go-first-optimum}).}
    Among full-set go-first-fair strings, allowing different multiplicities,
    the minimum possible total length is exactly \(n(n+1)/2\).

  \item \textbf{Place fairness
    (informal version of Theorem~\ref{thm:place-prouhet}).}
    For every \(n\geq2\), there are \(n\) equal-sided, full-set place-fair
    dice with \(2n^{n-2}\) faces each.

  \item \textbf{Approximate permutation fairness
    (informal version of Theorem~\ref{thm:approximate}).}
    For every \(n\geq3\) and \(0<\varepsilon\leq1\), there are equal-sided
    dice with
    \[
      O\!\left(n^{13/10}(\log n)^{1/5}\varepsilon^{-2/5}\right)
    \]
    faces each that give every permutation probability within relative
    error \(\varepsilon\) of \(1/n!\).
\end{enumerate}

\paragraph{Roadmap.}
Section~\ref{sec:prelim} develops the string representation and the elementary
symmetrization construction.  Section~\ref{sec:second} gives the sharper
Hahn-polynomial construction.  Section~\ref{sec:lower} proves the lower
bounds and determines the minimum size of one prescribed die.
Section~\ref{sec:weaker} treats go-first and place fairness, and
Section~\ref{sec:approximate} treats approximate permutation fairness.
The technical estimates for the approximate construction are proved in
Appendix~\ref{app:approximate-proof}.
All construction and bound results in this article have been formalized in
Lean~4, modulo classical inputs.\footnote{These are the existence of
Gaussian and Radau quadrature rules, the Gilboa--Peled equal-weight
quadrature theorem, and Bonami's hypercontractive inequality.  They are
stated as explicit hypotheses.}
The source code and documentation of the formalization are in the
accompanying repository~\cite{fairdice-lean}.

\subsection{Related work}

The dice problem can be viewed as a rigid finite-precision version of the
random-key method for sampling a permutation.  Draw \(n\) independent uniform
keys in \([0,1]\) and sort them.  Ties have probability zero, and symmetry
makes every order equally likely; see the discussion of uniform order
statistics in~\cite[Sec.~I.4.3]{devroye1986}.  With finite keys, one normally tolerates a small collision probability
or resamples when a collision occurs.  Neither option is allowed here: every
player makes one independent uniform draw, the supports are pairwise
disjoint, and the induced order must be exactly uniform.  This rigidity has
a substantial cost: by our lower bound, a constant fraction of the players
need supports of size \(2^{\Omega(n)}\), for a total of \(\Omega(n^2)\) bits
of entropy per draw, whereas standard approaches need only
\(O(\log n!)=O(n\log n)\) bits.

Permutation-fair, place-fair, and go-first-fair dice have been investigated
for small \(n\) in~\cite{harshbarger-web,harshbarger2012,ford2023,purcell}.
The insertion framework underlying our second construction originates in
this literature; an early online account attributes an inductive version to
Paul Vanetti~\cite{knoll2012}.  Ford et al. give five equal 60-sided
hereditarily place-fair dice, unequal-sided permutation-fair examples, and
five equal 180-sided permutation-fair dice~\cite{ford2023}.  Purcell develops
the symmetrization construction further and uses it to construct five equal
120-sided permutation-fair dice~\cite{purcell}.  A computational construction
due to Paul Meyer gives five equal 60-sided permutation-fair
dice~\cite{meyer60}.  We are not aware of a
previous treatment of the exact full-set, unequal-multiplicity go-first
optimum proved in Section~\ref{sec:weaker}.

A weaker string problem asks only that every permutation occur at least once
as a subsequence.  This has a long history
\cite{chvatal1972,newey1973,adleman1974,koutas1975,galbiati1976,kleitman1976,
zalinescu2011,radomirovic2012}; see Engen and Vatter~\cite{engen2020} for a
survey.  A superficially similar but different problem asks for an ordinary
permutation in which all order-isomorphic patterns of a fixed length occur
equally often.  Beniamini, Lavee, and Linial settle that problem, including
nonexistence from pattern length four onward~\cite{beniamini2025}; their host
permutation has no repeated letters, so that obstruction does not apply here.

There is also an established literature on \emph{fair words}.  In \v{C}ern\'y's
terminology~\cite{cerny2007,cerny2009}, a word is fair if
\(C_{ab}=C_{ba}\) for every pair of distinct letters; the binary case goes
back at least to Prodinger~\cite{prodinger1979}.  Among words in which every
letter occurs equally often, this is exactly our \((2,n)\)-fairness
(Definition~\ref{def:pattern-fairness}):
every ordered pair of distinct letters has the same subsequence count.
\v{C}ern\'y also studied
reversal symmetry of longer scattered subwords~\cite{cerny2008}; see
Salomaa~\cite{salomaa2007} for related subword-balance questions and
Richomme~\cite{richomme2026} for a recent account.  To avoid confusing this
pairwise notion with equality of all full-permutation counts, we use
\emph{permutation-fair string} for the latter in prose while retaining the
compact notation \(n\)-fair and \((k,n)\)-fair.

The same pairwise condition appears in work on tied and balanced
nontransitive dice, often through the same word encoding
\cite{kronenthal-traldi2009,cooley-et-al2014,schaefer-schweig2017,
hur-kim2021,kim-et-al2024}.  Those papers prescribe pairwise win
probabilities, rather than the joint distribution of the full order, and are
therefore adjacent to but weaker than the permutation-fair problem.

Our second construction also draws on classical equidistant quadrature and
the Hahn polynomials.  Wilson gave the discrete least-squares projection
formula for the quadrature weights and studied equidistant quadrature
\cite{wilson-algorithm,wilson-least-squares,wilson-quadrature}; his Hahn
estimates control the continuous integrals of the basis
polynomials~\cite{wilson-hahn}.  Huybrechs revisited Wilson's stable
high-order rules on equidistant points~\cite{huybrechs2009}, and later
least-squares quadrature work treats scattered nodes and other weight
functions~\cite{glaubitz2020,cappellazzo-et-al2023}.  Classical positive
quadrature goes back at least to Fej\'er and Shohat
\cite{fejer1933,shohat1937}.  For
positivity we use Zaremba's sharper bound at the integer nodes
\cite{zaremba1975}; Dette later obtained bounds on the whole interval
\cite{dette1995}.  Our use of these ingredients is arithmetic: we give an
explicit common-denominator estimate and show how one rule can be reused in
every insertion step.

More broadly, existence theorems for unweighted designs on connected spaces
go back to Seymour and Zaslavsky~\cite{seymour-zaslavsky1984}, with modern
quantitative results due to Kane~\cite{kane2015}.  Chebyshev-type interval
quadrature for the uniform measure has quadratic-order real-node bounds going
back to Bernstein, with later sharp formulations by Kuijlaars and by Gilboa
and Peled~\cite{bernstein1937,bernstein-positive1937,kuijlaars1993,kuijlaars1995,gilboa2017}; see also Peled's
general-measure bounds~\cite{peled2010}.  Imposing rational nodes leads to the
rational-design problem of Cui, Xia, and Xiang~\cite{cui2019}, a related
arithmetic variant of the quadrature problem whose solution builds in part
on Hilbert--Kamke methods~\cite{arkhipov1985}.  Our construction
uses real quadrature nodes and corrects their rational approximation by
changing the distributions of the other dice.

Approximate dice connect to classical shuffling laws. The periodic word
\((12\cdots n)^m\) realizes an inverse \(m\)-riffle shuffle: each symbol
independently chooses one of \(m\) levels uniformly, and the symbols are
ordered by level, breaking ties in increasing symbol order.  Its probabilities
are described by descent enumeration~\cite{stanley1972,bayer-diaconis1992}.
Repeating the palindrome \(12\cdots n\,n\cdots21\) realizes a shelf
shuffle, with the same level assignment but alternating increasing and
decreasing symbol order within the \(2m\) levels.
Diaconis, Fulman, and Holmes~\cite{diaconis-fulman-holmes2013}
analyze its maximal relative error and establish the \(n^{3/2}\) scale for
fixed accuracy. We independently relabel the palindrome blocks and bound
the resulting error using multilinear hypercontractivity
\cite{bonami1970,odonnell2014} and a multinomial collision estimate.

\section{Permutation-fair strings and elementary construction}\label{sec:prelim}

\subsection{Fair strings and their properties}

Write \([n]=\{1,\ldots,n\}\).  A \emph{pattern} is a string with no repeated
letter.  Let \(\PiSet_{n,k}\) be the set of all length-\(k\) patterns over
\([n]\), and let \(\PiSet_n=\PiSet_{n,n}\).  For a pattern \(\pi\) and a string
\(s\), write \(C_\pi(s)\) for the number of occurrences of \(\pi\) as a
subsequence of \(s\).  We put \(C_\varnothing(s)=1\).

\begin{definition}[String representation]
Let \(D_1,\ldots,D_n\) be dice whose face labels partition \([K]\).  Their
\emph{string representation} is the string \(s\in[n]^K\) defined by
\(s[a]=i\) when label \(a\) is a face of \(D_i\).
\end{definition}

\begin{definition}[Permutation-fair string]
A string \(s\) over \([n]\) in which every letter occurs at least once is
\emph{\(n\)-fair}, or \emph{permutation-fair}, if \(C_\pi(s)\) has the same
value for every \(\pi\in\PiSet_n\).
\end{definition}

\begin{lemma}\label{lem:dice-string}
A set of dice is permutation-fair if and only if its string representation is
permutation-fair.
\end{lemma}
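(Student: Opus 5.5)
The plan is to set up an explicit bijection between joint rolls and subsequence occurrences, and then read off the formula \(\Pr[\text{order}=\pi]=C_\pi(s)/\prod_i k_i\) stated in the introduction. Let \(D_1,\ldots,D_n\) be dice with \(k_i=|D_i|\geq1\) faces, whose labels partition \([K]\), and let \(s\in[n]^K\) be their string representation. A joint roll is a tuple \((a_1,\ldots,a_n)\) with \(a_i\in D_i\). All \(\prod_i k_i\) such tuples are equally likely, since the dice are rolled independently and each is uniform. The labels \(a_i\) are distinct because the \(D_i\) are disjoint, so each roll determines a unique order \(\pi\in\PiSet_n\): list the players by increasing label.

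\textbf{Bijection.} For a fixed \(\pi=\pi_1\pi_2\cdots\pi_n\), I will map a roll with order \(\pi\) to the increasing index sequence \(a_{\pi_1}<a_{\pi_2}<\cdots<a_{\pi_n}\). Since \(s[a_{\pi_j}]=\pi_j\) by definition of the string representation, this sequence is an occurrence of \(\pi\) as a subsequence of \(s\). Conversely, take an occurrence of \(\pi\), that is, indices \(b_1<\cdots<b_n\) with \(s[b_j]=\pi_j\). Setting \(a_{\pi_j}=b_j\) gives \(a_i\in D_i\) for every \(i\), because \(\pi\) contains each player exactly once; the resulting roll has order \(\pi\). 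The two maps are mutually inverse. This gives \(\Pr[\text{order}=\pi]=C_\pi(s)/\prod_i k_i\).

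\textbf{Conclusion.} The dice are permutation-fair exactly when this probability is the same for all \(\pi\in\PiSet_n\). Because the denominator \(\prod_i k_i\) does not depend on \(\pi\), that holds exactly when \(C_\pi(s)\) is constant in \(\pi\). The string \(s\) satisfies the side condition in the definition of an \(n\)-fair string, namely that every letter occurs at least once, because each die has \(k_i\geq1\) faces. Conversely, a string in which every letter occurs corresponds to a legitimate set of dice. This proves both directions.

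There is no serious obstacle. The only points needing care are that disjointness of the labels rules out ties, so every roll has a well-defined order, and that the nonemptiness hypotheses on the dice and on the letters of the string match up.
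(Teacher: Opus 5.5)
Your proposal is correct and is essentially the paper's argument: a bijection between the \(\prod_i k_i\) equally likely face selections and the occurrences of full patterns in \(s\), so that the orders are equiprobable exactly when all \(C_\pi(s)\) are equal. You spell out the inverse maps and the side condition that every letter occurs more explicitly than the paper does, but the approach is the same.
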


\begin{proof}
Choose one face from each die.  Sorting the chosen labels produces an
occurrence of a unique pattern \(\pi\in\PiSet_n\) in the string
representation.  Conversely, every such occurrence selects exactly one face
from every die.  Since all \(\prod_i k_i\) face selections have the same
probability, the orders are equiprobable exactly when their subsequence counts
are equal.
\end{proof}

Relabeling the alphabet preserves fairness.  Restricting to fewer letters
does as well: after fixing one face of every deleted die, summing over the
positions of those faces shows that all orders of the retained letters have
the same count.  We record this together with another useful operation.

\begin{claim}\label{clm:restriction-blowup}
Let \(s\) be \(n\)-fair.
\begin{enumerate}[label=(\roman*)]
  \item Relabeling the letters of \(s\) by a permutation of \([n]\) preserves
        fairness, and restricting \(s\) to any subset of its letters produces
        a permutation-fair string on that subset.
  \item Replacing every occurrence of a fixed letter \(i\) by a run of \(q\)
        copies of \(i\), where \(q\in\N\), preserves fairness.
\end{enumerate}
\end{claim}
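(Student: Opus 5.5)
The plan is to prove each part by a direct bijection or a counting identity on occurrences, reducing everything to the defining equality of \(C_\pi(s)\) over \(\pi\in\PiSet_n\).

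\emph{Relabeling.} Let \(\sigma\) be a permutation of \([n]\) and \(s^\sigma\) the relabeled string. The position sets of occurrences of \(\pi\) in \(s^\sigma\) are exactly the position sets of occurrences of \(\sigma^{-1}\circ\pi\) (apply \(\sigma^{-1}\) letterwise) in \(s\). Hence \(C_\pi(s^\sigma)=C_{\sigma^{-1}\pi}(s)\). Since \(\pi\mapsto\sigma^{-1}\pi\) permutes \(\PiSet_n\), all these counts are equal.

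\emph{Restriction.} Let \(S\subseteq[n]\) with \(|S|=m\), let \(T=[n]\setminus S\), and let \(s|_S\) be the subsequence of letters in \(S\). Every letter still occurs, so \(s|_S\) is nonempty in each letter. Fix a pattern \(\rho\) of length \(m\) over \(S\). Consider pairs consisting of an occurrence of \(\rho\) in \(s|_S\) (a choice of positions of letters of \(S\) in \(s\), read in the order \(\rho\)) and a choice of one position for each letter of \(T\). I will count these pairs in two ways.

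\begin{itemize}
  \item Directly, the number of pairs is \(C_\rho(s|_S)\prod_{j\in T}k_j\), since the choices for \(T\) are unconstrained.
  \item Alternatively, each pair selects one position for every letter. Sorting these positions yields a full pattern \(\pi\in\PiSet_n\) whose restriction to \(S\) is \(\rho\). Conversely, every occurrence of such a \(\pi\) in \(s\) arises from exactly one pair. So the number of pairs is \(\sum_{\pi\,:\,\pi|_S=\rho}C_\pi(s)\).
\end{itemize}

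Equating the two counts and using fairness, the sum equals \(c\cdot n!/m!\), where \(c\) is the common value. This is independent of \(\rho\). Dividing by the positive product \(\prod_{j\in T}k_j\) gives that \(C_\rho(s|_S)\) is constant.

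\emph{Blow-up.} Let \(s'\) replace each \(i\) by \(i^q\). Occurrences of \(\pi\in\PiSet_n\) in \(s'\) correspond to an occurrence of \(\pi\) in \(s\) together with a choice of one of the \(q\) copies in the run replacing the chosen \(i\). Because \(i\) appears exactly once in \(\pi\), this correspondence is \(q\)-to-one onto occurrences in \(s\). Distinct runs are disjoint and preserve the relative order with other letters, so the map is well defined and bijective onto (occurrence, copy) pairs. Thus \(C_\pi(s')=q\,C_\pi(s)\), which is constant.

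The only step needing care is the double counting in the restriction: I must check that the sorted full selection is genuinely an occurrence of a pattern with \(\pi|_S=\rho\), and that the map from pairs to occurrences is injective. Both follow because positions are distinct and each letter is selected exactly once.
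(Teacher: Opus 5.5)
Your proof is correct and is essentially the paper's argument written out in full. Relabeling permutes the counts via \(C_\pi(s^\sigma)=C_{\sigma^{-1}\pi}(s)\); restriction is the same double count the paper uses, pairing an occurrence of the retained pattern with one chosen face of each deleted die and summing over the \(n!/m!\) full patterns extending it, as in Lemma~\ref{lem:partial}; and the blow-up gives every occurrence exactly \(q\) lifts.
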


\begin{proof}
Relabeling merely permutes the pattern counts.  The restriction statement
follows by the preceding summation argument.  For (ii), every occurrence of a
full pattern in \(s\) has exactly \(q\) lifts to the enlarged string, one for
each copy of its unique occurrence of \(i\).
\end{proof}

In particular, arbitrary permutation-fair dice may be made equal-sided, although this
generic operation is much more expensive than the equal-sided construction of
Section~\ref{sec:second}.

\begin{corollary}\label{cor:equalize}
If \(s\) is \(n\)-fair, there is an \(n\)-fair string \(s'\) in which every
letter has the same multiplicity and
\[
  |s'|\leq |s|^n.
\]
\end{corollary}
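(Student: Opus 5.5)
We will use Claim~\ref{clm:restriction-blowup}(ii), which lets us blow up each letter of \(s\) separately. Let \(k_i\ge1\) be the multiplicity of letter \(i\) in \(s\), and put \(P=\prod_{j=1}^n k_j\). We will aim for the common multiplicity \(P\), not the least common multiple. This makes the bound easy to check, and it is already enough.

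\textbf{Step 1 (construction).} For each letter \(i\) in turn, replace every occurrence of \(i\) by a run of \(q_i=P/k_i=\prod_{j\neq i}k_j\) copies of \(i\). Each \(q_i\) is a positive integer, so Claim~\ref{clm:restriction-blowup}(ii) applies at every step. After all \(n\) replacements the resulting string \(s'\) is still \(n\)-fair. Each replacement changes only the multiplicity of the letter being blown up, so letter \(i\) ends with multiplicity \(k_iq_i=P\). Hence every letter has the same multiplicity.

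\textbf{Step 2 (length bound).} The length of \(s'\) is \(|s'|=nP=n\prod_i k_i\). Since \(\sum_i k_i=|s|\), the AM--GM inequality gives \(P\le(|s|/n)^n\), and so
\[
  |s'|\le n\Bigl(\frac{|s|}{n}\Bigr)^{n}=\frac{|s|^n}{n^{n-1}}\le |s|^n .
\]

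There is no real obstacle. The only points to check are that the step sizes \(q_i\) are integers and that the blow-ups of different letters do not interfere with one another. The first holds because \(q_i\) is a product of the other multiplicities. The second holds because each application of part (ii) is made to a string that is already fair, and it changes only one letter's count. If a smaller string were wanted, one could use \(L=\lcm(k_1,\dots,k_n)\) and \(q_i=L/k_i\) instead, which gives \(|s'|=nL\le nP\); the same bound follows.
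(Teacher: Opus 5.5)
Your proof is correct and matches the paper's argument exactly. Both blow up each letter \(i\) by \(\prod_{j\neq i}k_j\) using Claim~\ref{clm:restriction-blowup}(ii), obtain common multiplicity \(\prod_j k_j\), and bound the length \(n\prod_j k_j\) by AM--GM.
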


\begin{proof}
If letter \(i\) occurs \(c_i\) times, blow it up by
\(\prod_{j\neq i}c_j\).  Every letter then occurs \(\prod_jc_j\) times, so
\[
  |s'|=n\prod_jc_j
  \leq n\left(\frac{|s|}{n}\right)^n
  \leq |s|^n.
\]
\end{proof}

Full fairness determines all partial-pattern counts.

\begin{lemma}[Partial-pattern count]\label{lem:partial}
Let \(s\) be \(n\)-fair, let \(C\) be the common count of a full pattern, and
let \(c_i=C_i(s)\).  If \(\pi\in\PiSet_{n,k}\) uses the set \(S\subseteq[n]\),
then
\begin{equation}\label{eq:partial-count}
  C_\pi(s)
  =\frac{(n!/k!)C}{\prod_{\ell\notin S}c_\ell}.
\end{equation}
\end{lemma}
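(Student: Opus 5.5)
Let \(\pi\in\PiSet_{n,k}\) use the letter set \(S\), \(|S|=k\), and write \(T=[n]\setminus S\). The plan is a double-counting argument that mirrors the restriction summation in Claim~\ref{clm:restriction-blowup}(i). We count, in two ways, the pairs consisting of an occurrence of \(\pi\) in \(s\) and one chosen position for each letter of \(T\).

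\textbf{First count.} Such a pair is an occurrence of \(\pi\) together with a choice of one occurrence of every \(\ell\in T\). There are \(\prod_{\ell\notin S}c_\ell\) ways to make the second choice, independently of the first. So the number of pairs is \(C_\pi(s)\prod_{\ell\notin S}c_\ell\).

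\textbf{Second count.} Each such pair selects exactly one position for every letter of \([n]\), and these positions are distinct because they carry different letters. Reading them left to right gives an occurrence of a unique full pattern \(\sigma\in\PiSet_n\), and the letters of \(S\) appear in \(\sigma\) in the order \(\pi\). Conversely, any occurrence of a full pattern \(\sigma\) whose restriction to \(S\) equals \(\pi\) arises from exactly one pair. The pair is recovered by taking its \(S\)-positions as the occurrence of \(\pi\) and its \(T\)-positions as the choices. Hence the number of pairs is
\[
\sum_{\sigma\in\PiSet_n,\ \sigma|_S=\pi} C_\sigma(s)=C\cdot\#\{\sigma\in\PiSet_n:\sigma|_S=\pi\},
\]
using \(n\)-fairness.

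\textbf{Counting extensions.} It remains to count the permutations of \([n]\) in which the letters of \(S\) appear in the prescribed order \(\pi\). By symmetry among the \(k!\) relative orders of \(S\), this number is \(n!/k!\). Equating the two counts gives \(C_\pi(s)\prod_{\ell\notin S}c_\ell=(n!/k!)C\). Every \(c_\ell\ge1\), since every letter occurs in an \(n\)-fair string, so we may divide and obtain~\eqref{eq:partial-count}.

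The only delicate point is the bijection in the second count. It holds because a subsequence occurrence is a set of positions, and positions carrying distinct letters never coincide, so nothing is double counted. I expect no real obstacle.
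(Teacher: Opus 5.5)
Your proposal is correct and follows essentially the same argument as the paper: double-count pairs consisting of an occurrence of \(\pi\) and one chosen occurrence of each omitted letter. These pairs correspond bijectively to occurrences of the \(n!/k!\) full patterns restricting to \(\pi\), each of which has count \(C\).
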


\begin{proof}
Count pairs consisting of an occurrence of \(\pi\) and one chosen occurrence
of each omitted letter.  Directly, their number is
\(C_\pi(s)\prod_{\ell\notin S}c_\ell\).  Sorting all chosen positions gives a
full pattern whose restriction to \(S\) is \(\pi\).  There are \(n!/k!\) such
full patterns, each with count \(C\), proving~\eqref{eq:partial-count}.
\end{proof}

\begin{corollary}\label{cor:self-concat}
If \(s\) is \(n\)-fair, then \(s\concat s\), and hence every positive power
\(s^m\), is \(n\)-fair.
\end{corollary}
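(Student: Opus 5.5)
We prove that \(s\concat s\) is \(n\)-fair; the statement for \(s^m\) then follows by induction on \(m\), writing \(s^{m}=s^{m-1}\concat s\) and applying the same argument to the pair \((s^{m-1},s)\). For that step we prove a slightly more general claim: if \(s\) and \(t\) are both \(n\)-fair, then so is \(s\concat t\). Since \(s^{m-1}\) is \(n\)-fair by the induction hypothesis, this covers the induction. Every letter still occurs in \(s\concat t\), so only the equality of full-pattern counts needs checking.

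Fix \(\pi=\pi_1\cdots\pi_n\in\PiSet_n\). An occurrence of \(\pi\) in \(s\concat t\) is determined by the number \(j\in\{0,\ldots,n\}\) of its letters that fall in \(s\). Once \(j\) is fixed, the occurrence is an occurrence of the prefix \(\pi_1\cdots\pi_j\) in \(s\) followed by an occurrence of the suffix \(\pi_{j+1}\cdots\pi_n\) in \(t\). Hence
\[
  C_\pi(s\concat t)=\sum_{j=0}^{n} C_{\pi_1\cdots\pi_j}(s)\,C_{\pi_{j+1}\cdots\pi_n}(t).
\]

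Now apply Lemma~\ref{lem:partial} to each factor. Let \(C\) and \(c_\ell\) be the full count and letter multiplicities for \(s\), and \(D\) and \(d_\ell\) those for \(t\). Write \(S_j=\{\pi_1,\ldots,\pi_j\}\). The \(j\)-th term equals
\[
  \frac{(n!/j!)\,C}{\prod_{\ell\notin S_j}c_\ell}\cdot
  \frac{(n!/(n-j)!)\,D}{\prod_{\ell\in S_j}d_\ell},
\]
which depends on \(\pi\) only through the set \(S_j\). As \(\pi\) varies this set changes, so the individual terms are not themselves invariant.

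The main obstacle is this apparent dependence on the set. It disappears in the case \(t=s\), which is the case the induction actually needs when \(m=2\). There \(d_\ell=c_\ell\) and \(D=C\), so the denominator \(\prod_{\ell\notin S_j}c_\ell\prod_{\ell\in S_j}c_\ell=\prod_\ell c_\ell\) is independent of \(S_j\). Thus \(C_\pi(s\concat s)=\sum_j \binom{n}{j}\,n!\,C^2/\prod_\ell c_\ell\), the same for every \(\pi\).

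For general \(m\) we should not rely on the lopsided pair \((s^{m-1},s)\), whose multiplicities differ. Instead we decompose an occurrence in \(s^m\) by how many of its letters lie in each of the \(m\) blocks, using a composition \(n=j_1+\cdots+j_m\). The \(r\)-th consecutive segment of \(\pi\), with letter set \(S^{(r)}\), has count
\[
  \frac{(n!/j_r!)\,C}{\prod_{\ell\notin S^{(r)}}c_\ell}
\]
in \(s\) by Lemma~\ref{lem:partial}. Since the sets \(S^{(r)}\) partition \([n]\), each letter is omitted from exactly \(m-1\) segments. So the product of denominators is \(\left(\prod_\ell c_\ell\right)^{m-1}\), again independent of \(\pi\). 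Summing over compositions shows \(C_\pi(s^m)\) is constant in \(\pi\), which proves the corollary directly without the general two-string claim.
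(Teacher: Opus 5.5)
Your argument is correct and is essentially the paper's. You split an occurrence by how many letters fall in each copy, apply Lemma~\ref{lem:partial} to every segment, and note that each \(c_\ell\) then appears in the combined denominator a number of times independent of \(\pi\); your direct \(m\)-block decomposition just makes the paper's ``iteration gives \(s^m\)'' explicit.

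Two remarks on the framing. First, the general claim in your first paragraph is actually false, not merely unproved: for \(s=1221\) and \(t=211112\), both \(2\)-fair, \(s\concat t\) has \(C_{12}=10\neq14=C_{21}\). That claim should be deleted. Second, the pair \((s^{m-1},s)\) you set aside does work. Its multiplicities \((m-1)c_\ell\) and \(c_\ell\) are proportional, so the \(j\)-th denominator becomes \((m-1)^{n-j}\prod_\ell c_\ell\), which depends only on \(j\).
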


\begin{proof}
For \(\pi\in\PiSet_n\), split an occurrence according to the number \(j\) of
letters chosen from the first copy:
\[
  C_\pi(s\concat s)
  =\sum_{j=0}^n C_{\pi[1:j]}(s)C_{\pi[j+1:n]}(s).
\]
For fixed \(j\), applying Lemma~\ref{lem:partial} to the two complementary
patterns makes the product independent of \(\pi\): its denominator contains
each \(c_i\) exactly once.  The result follows, and iteration gives \(s^m\).
\end{proof}

For the first construction, it is convenient to ask for fairness at every
pattern length.

\begin{definition}\label{def:pattern-fairness}
A string \(s\) over \([n]\) is \emph{\((k,n)\)-fair} if \(C_\pi(s)\) is the
same for all \(\pi\in\PiSet_{n,k}\).  It is \emph{\((\leq k,n)\)-fair} if it
is \((j,n)\)-fair for every \(0\leq j\leq k\).
\end{definition}

The concatenation of two \((\leq k,n)\)-fair strings is again
\((\leq k,n)\)-fair: split an occurrence at the concatenation point and use
fairness of the two pieces at the corresponding lengths.  Equal-composition
permutation-fair strings automatically satisfy this stronger property.

\begin{lemma}\label{lem:equal-full-to-partial}
If \(s\) is \(n\)-fair and every letter occurs equally often, then \(s\) is
\((\leq n,n)\)-fair.
\end{lemma}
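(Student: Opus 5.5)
This follows directly from Lemma~\ref{lem:partial}. Let \(c\) be the common multiplicity of the letters, so \(c_i=c\) for every \(i\in[n]\), and let \(C\) be the common count of a full pattern. Fix \(0\leq k\leq n\) and a pattern \(\pi\in\PiSet_{n,k}\) using the letter set \(S\), where \(|S|=k\). Equation~\eqref{eq:partial-count} gives
\[
  C_\pi(s)=\frac{(n!/k!)\,C}{\prod_{\ell\notin S}c_\ell}
  =\frac{(n!/k!)\,C}{c^{\,n-k}}.
\]
The right-hand side depends only on \(k\). It does not depend on the order of the letters in \(\pi\), and it does not depend on which \(k\)-subset \(S\) is used. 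Hence all patterns in \(\PiSet_{n,k}\) have the same count, so \(s\) is \((k,n)\)-fair for each \(k\), and therefore \((\leq n,n)\)-fair.

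The edge cases need a quick check. For \(k=0\), the only pattern is \(\varnothing\), so \((0,n)\)-fairness holds trivially; the formula is also consistent, since \(C_\varnothing(s)=1\) and \(n!\,C=c^n\) count the same face selections. For \(k=1\), the formula returns \(c\), as expected. I do not expect a real obstacle here. The only point to verify is that Lemma~\ref{lem:partial} applies to every \(k\), including \(k=n\), where the formula reduces to \(C\). Its double-counting proof covers all these cases, because \(c_i\geq1\) for every \(i\), so no denominator vanishes.
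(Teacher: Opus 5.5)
Your proposal is correct and is the same argument as the paper: apply the partial-pattern count formula \eqref{eq:partial-count} and observe that, with all multiplicities equal, the denominator depends only on the number of omitted letters. Your edge-case checks for \(k=0,1,n\) are accurate but not needed beyond what Lemma~\ref{lem:partial} already covers.
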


\begin{proof}
In~\eqref{eq:partial-count}, the denominator depends only on the number of
omitted letters, not on their identities.  Thus every pattern count depends
only on its length.
\end{proof}

\subsection{The first construction: symmetrization}\label{sec:first}

We first give an elementary construction.  Rather than adding letters one at
a time, it progressively enforces symmetry for longer patterns.

If \(\tau\in\PiSet_n\) is viewed as a permutation of \([n]\), let \(s^\tau\)
denote the string obtained from \(s\) by replacing every letter \(i\) by
\(\tau(i)\).

\begin{lemma}[Symmetrization; cf.\ \cite{purcell}]\label{lem:symmetrize}
Let \(k<n\).  From any \((\leq k,n)\)-fair string \(s\) of length \(\ell\), one
can construct a \((\leq k+1,n)\)-fair string of length \(\ell n!\).
\end{lemma}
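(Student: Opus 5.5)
The construction is the one from the introduction: fix any ordering \(\tau_1,\ldots,\tau_{n!}\) of the permutations of \([n]\) and set
\[
  t=s^{\tau_1}\concat s^{\tau_2}\concat\cdots\concat s^{\tau_{n!}},
\]
which has length \(\ell n!\). Two properties must be checked: that \(t\) is still \((\leq k,n)\)-fair, and that it is in addition \((k+1,n)\)-fair. The first is immediate. Relabeling permutes the set \(\PiSet_{n,j}\) of length-\(j\) patterns, so each \(s^{\tau}\) is \((\leq k,n)\)-fair. As remarked after Definition~\ref{def:pattern-fairness}, a concatenation of \((\leq k,n)\)-fair strings is again \((\leq k,n)\)-fair.

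For length \(k+1\), fix \(\pi\in\PiSet_{n,k+1}\) and classify each occurrence of \(\pi\) in \(t\) by how its letters are distributed among the \(n!\) blocks. There are two kinds of occurrences.
\begin{itemize}
  \item \emph{Spread occurrences} use at least two blocks. Then every block contributes a contiguous piece \(\pi[a:b]\) of length at most \(k\). The count of such an occurrence is a product of counts \(C_{\pi[a:b]}(s^{\tau_i})\). By \((\leq k,n)\)-fairness each factor equals \(N_{b-a}\), where \(N_j\) denotes the common count of length-\(j\) patterns in \(s\). For a fixed choice of blocks and split points, the product therefore depends only on the lengths of the pieces, not on \(\pi\).
  \item \emph{Concentrated occurrences} lie entirely inside a single block. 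These contribute
  \[
    \sum_{\tau}C_\pi(s^\tau)=\sum_\tau C_{\tau^{-1}(\pi)}(s),
  \]
  where \(\tau^{-1}(\pi)\) applies \(\tau^{-1}\) letterwise.
\end{itemize}

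The only real point is the concentrated term, and symmetrizing over all relabelings handles it. As \(\tau\) ranges over all permutations of \([n]\), the pattern \(\tau^{-1}(\pi)\) ranges over \(\PiSet_{n,k+1}\), and each pattern is hit exactly \((n-k-1)!\) times: these are the permutations that agree with a prescribed injection on the \(k+1\) letters of \(\pi\). This is where \(k<n\) is used, since it guarantees \(\PiSet_{n,k+1}\) is nonempty. The concentrated contribution is therefore
\[
  (n-k-1)!\sum_{\sigma\in\PiSet_{n,k+1}}C_\sigma(s),
\]
which is independent of \(\pi\).

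Adding the two contributions shows that \(C_\pi(t)\) is the same for every \(\pi\in\PiSet_{n,k+1}\). Hence \(t\) is \((\leq k+1,n)\)-fair, with length \(\ell n!\), as claimed.
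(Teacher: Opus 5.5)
Your proposal is correct and follows essentially the same route as the paper: the same concatenation of all $n!$ relabelings, the same split into occurrences whose block pieces all have length at most $k$ (handled by $(\leq k,n)$-fairness of the relabeled blocks) versus those concentrated in a single block, and the same count showing $\tau^{-1}(\pi)$ hits each length-$(k+1)$ pattern exactly $(n-k-1)!$ times. The only cosmetic difference is that you dispatch lengths $j\leq k$ by citing the concatenation remark, whereas the paper treats every length $t\leq k+1$ uniformly through its occupancy-vector argument.
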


\begin{proof}
List the \(n!\) permutations as \(\tau_1,\ldots,\tau_{n!}\), in any order, and
set
\[
  s'=s^{\tau_1}\concat s^{\tau_2}\concat\cdots\concat s^{\tau_{n!}}.
\]
Fix \(t\leq k+1\), a pattern \(\pi\in\PiSet_{n,t}\), and an occupancy vector
\((q_1,\ldots,q_{n!})\) specifying how many consecutive letters of \(\pi\) are
selected from each block.  If every \(q_a\leq k\), the contribution of this
occupancy vector is a product of counts of patterns of lengths \(q_a\) in
relabelings of \(s\).  Since \(s\) is \((\leq k,n)\)-fair, that product
depends only on the vector, not on \(\pi\).

The only remaining case is \(t=k+1\) and all \(k+1\) selected letters lie in
one block.  Summed over these \(n!\) possible blocks, the contribution is
\[
  \sum_{\tau\in\PiSet_n} C_\pi(s^\tau).
\]
As \(\tau\) ranges over all permutations, \(\tau^{-1}(\pi)\) ranges uniformly
over the length-\((k+1)\) patterns: each such pattern occurs
\((n-k-1)!\) times.  This sum is therefore independent of \(\pi\).  Summing
over all occupancy vectors proves that \(s'\) is
\((\leq k+1,n)\)-fair.
\end{proof}

\begin{theorem}[Elementary symmetrization]\label{thm:first-size}
For every \(n\), there are \(n\) equal-sided permutation-fair dice with
\((n!)^{n-1}\) faces each.  Equivalently, there is a
\((\leq n,n)\)-fair string of length
\[
  n(n!)^{n-1}=2^{O(n^2\log n)}.
\]
\end{theorem}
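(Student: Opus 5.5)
The plan is to start from the base string \(u_1=12\cdots n\) and apply Lemma~\ref{lem:symmetrize} exactly \(n-1\) times.

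First, check the base case. The string \(u_1\) has length \(n\). The only pattern of length \(0\) is \(\varnothing\), with count \(1\) by convention. Every pattern of length \(1\) occurs exactly once, because each letter appears once. Hence \(u_1\) is \((\leq 1,n)\)-fair.

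Next, induct. For \(k=1,\ldots,n-1\), Lemma~\ref{lem:symmetrize} takes a \((\leq k,n)\)-fair string \(u_k\) of length \(n(n!)^{k-1}\). It returns a \((\leq k+1,n)\)-fair string \(u_{k+1}\) of length \(n(n!)^{k}\). The lemma requires \(k<n\), which holds at every step. After \(n-1\) applications we obtain a \((\leq n,n)\)-fair string \(u_n\) of length \(n(n!)^{n-1}\).

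Then read off the statement.
\begin{itemize}
\item Since \(u_n\) is in particular \((n,n)\)-fair, all full permutations have equal counts, so \(u_n\) is \(n\)-fair.
\item Every letter occurs equally often in \(u_n\). Indeed, \(u_1\) has composition \((1,\ldots,1)\), and concatenating relabelings of a string with constant composition preserves constant composition. Each round multiplies every multiplicity by \(n!\).
\item Each letter therefore occurs \((n!)^{n-1}\) times. By Lemma~\ref{lem:dice-string}, the corresponding dice are permutation-fair and equal-sided with \((n!)^{n-1}\) faces each.
\end{itemize}

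Finally, the asymptotic bound follows from \(\log(n!)=O(n\log n)\). This gives \(\log\bigl(n(n!)^{n-1}\bigr)=\log n+(n-1)\,O(n\log n)=O(n^2\log n)\). There is no real obstacle here: all the combinatorial work is already done in Lemma~\ref{lem:symmetrize}. The only points needing care are the base case, where the empty pattern must be counted, and tracking the multiplicities to justify that the dice are equal-sided.
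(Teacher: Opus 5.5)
Your proposal is correct and follows the paper's proof: start from \(12\cdots n\), apply Lemma~\ref{lem:symmetrize} \(n-1\) times, and let each application multiply the length by \(n!\). Your extra bookkeeping on letter multiplicities is valid; it also follows at once from \((1,n)\)-fairness of the final string.
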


\begin{proof}
The string \(s_1=12\cdots n\) is \((\leq1,n)\)-fair.  Apply
Lemma~\ref{lem:symmetrize} \(n-1\) times.  Each application multiplies the
length by \(n!\).
\end{proof}

This proves the qualitative existence of permutation-fair dice for every
\(n\).  The next construction improves the exponent and keeps every die the
same size throughout.

\section{The second construction: Hahn quadrature}\label{sec:second}

We now add one letter at a time.  Fix the final target \(n\), and let \(s\) be
an equal-composition \(r\)-fair string, where \(r<n\).  Given \(N\) copies of
\(s\) and nonnegative integers \(k_0,\ldots,k_N\), form
\begin{equation}\label{eq:interlace}
 T=(r+1)^{k_0}\concat s\concat(r+1)^{k_1}\concat s\concat\cdots
   \concat s\concat(r+1)^{k_N}.
\end{equation}
There are \(N+1\) gaps, numbered \(0,\ldots,N\), around \(N\) copies of \(s\).

\subsection{The insertion step is a quadrature problem}

Let every old symbol occur \(F\) times in \(s\), and put
\(K=\sum_i k_i\) and \(w_i=k_i/K\).  The Bernstein
polynomials~\cite{lorentz1986}
\[
  b_{r,j}(x)=\binom rj x^j(1-x)^{r-j},\qquad 0\leq j\leq r,
\]
satisfy
\begin{equation}\label{eq:bernstein-integral}
  \int_0^1 b_{r,j}(x)\,dx=\frac1{r+1}.
\end{equation}

\begin{lemma}[Insertion criterion]\label{lem:insertion}
If
\begin{equation}\label{eq:insertion-moments}
  \sum_{i=0}^N w_i b_{r,j}(i/N)=\frac1{r+1}
  \qquad(0\leq j\leq r),
\end{equation}
then the string \(T\) in~\eqref{eq:interlace} is \((r+1)\)-fair.
\end{lemma}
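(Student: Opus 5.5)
Fix a full pattern \(\pi\in\PiSet_{r+1}\) and write it as \(\pi=\alpha\,(r+1)\,\beta\), where \(\alpha\) is a pattern of length \(j\) and \(\beta\) a pattern of length \(r-j\) on the complementary old letters. We count occurrences of \(\pi\) in \(T\) by conditioning on which inserted copy of the letter \(r+1\) is used. If that copy lies in gap \(i\), then \(\alpha\) must occur using the first \(i\) copies of \(s\) and \(\beta\) using the last \(N-i\) copies. This gives
\[
  C_\pi(T)=\sum_{i=0}^N k_i\, C_\alpha(s^i)\,C_\beta(s^{N-i}).
\]
The plan is to show that \(C_\alpha(s^i)C_\beta(s^{N-i})\) depends only on \(j\) and \(i\), and equals a constant multiple of \(b_{r,j}(i/N)\). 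The hypothesis~\eqref{eq:insertion-moments} then makes the sum independent of \(j\).

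\textbf{Step 1: counts in powers of \(s\).} By Corollary~\ref{cor:self-concat}, \(s^m\) is \(r\)-fair, and every letter occurs \(mF\) times in it. By Lemma~\ref{lem:equal-full-to-partial}, \(s^m\) is therefore \((\leq r,r)\)-fair, so \(C_\alpha(s^m)\) depends only on \(|\alpha|=j\). Let \(C\) be the common full-pattern count of \(s\).

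I will compute \(C_\alpha(s^m)\) via Lemma~\ref{lem:partial}. The one remaining ingredient is the full count \(C_{\rm full}(s^m)\). Split an occurrence across the \(m\) copies as in Corollary~\ref{cor:self-concat}. Each block contributes a partial count given by Lemma~\ref{lem:partial}, and summing over occupancy vectors gives a polynomial in \(m\).

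It is cleaner to avoid that polynomial. The set \(S\) of letters of \(\alpha\) and its complement play symmetric roles, so I compute the product directly. Consider the restriction of \(s\) to \(S\): it is \(j\)-fair with multiplicities \(F\). The key identity I would aim for is
\[
  C_\alpha(s^m)=\frac{m^j F^j}{j!}\cdot\frac{j!\,C_{\alpha}(s)\cdot(\text{stuff})}{F^j}\Big|\ldots
\]
More concretely, I expect \(C_\alpha(s^m)\) to be a polynomial in \(m\) whose leading behaviour is \((mF)^j/j!\) times a correction. This is where care is needed.

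\textbf{Step 2: the main obstacle.} The product \(C_\alpha(s^i)C_\beta(s^{N-i})\) is in general not exactly \(\binom{r}{j}^{-1}\) times a pure monomial in \(i\), because lower-order terms appear. The fix I would try is to sum over \(j\)-patterns instead. Because \(s^m\) is \((\le r,r)\)-fair, we have \(C_\alpha(s^m)=\frac{(r-j)!}{r!}\,E_j(s^m)\), where \(E_j\) is the total number of length-\(j\) patterns occurring. I would then try to prove \(E_j(s^m)=\binom{r}{j}(mF)^j\) up to a factor independent of \(m\).

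If that fails, the fallback is to avoid computing explicitly and argue by symmetry. Let \(\bar\alpha\) denote the pattern obtained from \(\alpha\) by reversing the alphabet roles. The complementary decomposition \(\pi'=\beta'\,(r+1)\,\alpha'\) shows that the \(i\)-dependence is governed by exactly the Bernstein weights. Indeed, the number of ways to interleave a uniformly random ordering of \(r\) i.i.d.\ uniform positions with one fixed cut at \(x=i/N\), so that exactly the \(j\) letters of \(S\) fall before the cut, is \(b_{r,j}(x)\) times a normalisation.

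\textbf{Step 3: conclude.} Writing \(C_\alpha(s^i)C_\beta(s^{N-i})=A\,b_{r,j}(i/N)\) with \(A\) independent of \(\pi\), we get
\[
  C_\pi(T)=A K\sum_i w_i b_{r,j}(i/N)=\frac{AK}{r+1}.
\]
This value is independent of \(\pi\), so \(T\) is \((r+1)\)-fair. I expect Step 2, the exact factorisation into a Bernstein polynomial rather than an approximate one, to be the main obstacle.
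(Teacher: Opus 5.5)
\textbf{Verdict: there is a gap.} Your setup is correct. Writing \(\pi=\alpha\,(r+1)\,\beta\) and conditioning on the gap of the chosen copy of \(r+1\) gives
\[
C_\pi(T)=\sum_{i=0}^N k_i\,C_\alpha(s^i)\,C_\beta(s^{N-i}),
\]
and your Step~3 is the right conclusion. But Step~2 is the entire content of the lemma, and you leave it open. Worse, you assert that the factorisation fails because ``lower-order terms appear.'' They do not.

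\textbf{The missing observation.} Fix the set \(S\) of the \(j\) letters of \(\alpha\). In \(s^m\) there are \((mF)^j\) ways to choose one occurrence of each letter of \(S\). Each such choice is an occurrence of exactly one of the \(j!\) patterns on \(S\). Since \(s^m\) is \((j,r)\)-fair, which you already noted in Step~1, these \(j!\) counts are equal. Hence
\[
C_\alpha(s^m)=\frac{(mF)^j}{j!}
\]
exactly. For \(m=0\) both sides vanish unless \(j=0\), with \(0^0=1\); this case needs no appeal to Corollary~\ref{cor:self-concat}. Therefore
\[
C_\alpha(s^i)\,C_\beta(s^{N-i})
=\frac{(iF)^j\,((N-i)F)^{r-j}}{j!\,(r-j)!}
=\frac{F^rN^r}{r!}\,b_{r,j}(i/N).
\]
So \(A=F^rN^r/r!\), and \(C_\pi(T)=KF^rN^r/(r+1)!\) for every \(\pi\). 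This is exactly the paper's argument.

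\textbf{Your other routes.} Your first fallback via \(E_j\) would have closed the gap if you had finished it. The identity \(E_j(s^m)=\binom rj(mF)^j\) holds exactly, not just up to an \(m\)-independent factor, by the same partition argument summed over all \(j\)-sets. Then
\[
C_\alpha(s^m)=\frac{(r-j)!}{r!}\binom rj(mF)^j=\frac{(mF)^j}{j!}.
\]
Likewise, the route through Lemma~\ref{lem:partial} yields a monomial, because the case \(j=r\) gives \(C_{\mathrm{full}}(s^m)=(mF)^r/r!\). The occupancy-vector polynomial you anticipated collapses to this.

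Your second fallback, about i.i.d.\ uniform positions and a cut at \(x=i/N\), is only a continuous heuristic. It cannot by itself give an exact identity for discrete subsequence counts. As written, the proposal does not prove the lemma, although the fix is one line.
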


\begin{proof}
By Lemma~\ref{lem:equal-full-to-partial}, \(s\) is
\((\leq r,r)\)-fair; so is each power \(s^a\).  Consequently, for every
pattern \(\pi\in\PiSet_{r,j}\) of old symbols,
\[
  C_\pi(s^i)=\frac{(iF)^j}{j!},
\]
because all \(j!\) orders partition the \((iF)^j\) choices of one occurrence
of each symbol.  The analogous suffix count is
\(((N-i)F)^{r-j}/(r-j)!\).

Fix a full pattern in which the new symbol has rank \(j+1\).  If its chosen
occurrence of \(r+1\) lies in gap \(i\), the number of choices for the old
symbols is therefore
\[
  \frac{(iF)^j}{j!}\,
  \frac{((N-i)F)^{r-j}}{(r-j)!}.
\]
After summing over all \(k_i\) occurrences in every gap, its total count is
\begin{align*}
  \sum_{i=0}^N k_i
  \frac{(iF)^j((N-i)F)^{r-j}}{j!(r-j)!}
  & =\frac{KF^rN^r}{r!}
    \sum_{i=0}^N w_i b_{r,j}(i/N)\\
  & =\frac{KF^rN^r}{(r+1)!},
\end{align*}
which is independent of both \(j\) and the order of the old symbols.
\end{proof}

Thus it suffices to find positive rational weights \(w_i\) that integrate all
polynomials of degree at most \(n\) on \([0,1]\).  The same rule will then
satisfy~\eqref{eq:insertion-moments} at every stage \(r<n\).

\begin{remark}[Why positivity needs care]\label{rem:positivity-gap}
A Riemann sum is close to the integral, but correcting its moments at only
\(n+1\) selected grid points does not automatically preserve nonnegativity.
For example, in degree \(2\), correcting the uniform weights \(1/m\) at
\(0,1/3,2/3\) gives at \(1/3\) the weight
\[
  -\frac1{2m}+\frac3{2m^2},
\]
which is negative for every \(m>3\).  The Hahn-basis projection below spreads the
correction across the entire grid and gives a uniform positivity estimate.
\end{remark}

\subsection{Hahn polynomials on an equally spaced grid}

Set
\begin{equation}\label{eq:N-choice}
  N=n(n+2),
  \qquad N+1=(n+1)^2.
\end{equation}
For \(0\leq k\leq n\), define the discrete Chebyshev (Hahn) polynomial
\begin{equation}\label{eq:Q-hypergeom}
  Q_k(x)=Q_k(x;0,0,N+1)
  ={}_3F_2\!\left(\begin{matrix}-k,-x,k+1\\1,-N\end{matrix};1\right).
\end{equation}
In Wilson's normalization~\cite{wilson-hahn}, \(Q_k(0)=1\), and
\begin{equation}\label{eq:Q-expansion}
  Q_k(x)=\sum_{a=0}^k(-1)^a
  \binom ka\binom{k+a}{a}
  \frac{\fall{x}{a}}{\fall{N}{a}}.
\end{equation}
Here \(\fall{x}{a}=x(x-1)\cdots(x-a+1)\), with \(\fall{x}{0}=1\),
is the falling factorial.
The polynomials \(Q_0,\ldots,Q_n\) are orthogonal for the uniform measure on
\(\{0,1,\ldots,N\}\).  Define the unnormalized squared norm and the continuous
integral
\[
  H_k=\sum_{i=0}^NQ_k(i)^2,
  \qquad
  I_k=\int_0^NQ_k(x)\,dx.
\]
The standard Hahn norm formula
(see the primary formulas in~\cite{karlin-mcgregor1961,dette1995}, as well as
\cite[Sec.~9.5, Eq.~(9.5.2)]{koekoek2010} and
\cite[Table~18.19.1]{dlmf}, translated from their largest-grid-index
convention to Wilson's number-of-grid-points convention) specializes to
\begin{equation}\label{eq:H-norm}
  H_k
  =\frac{N+1}{2k+1}
   \frac{(N+2)(N+3)\cdots(N+k+1)}{N(N-1)\cdots(N-k+1)}
  =\frac{(N+k+1)!(N-k)!}{(2k+1)(N!)^2}.
\end{equation}
In particular,
\begin{equation}\label{eq:H-lower}
  H_k\geq\frac{N+1}{2k+1}.
\end{equation}

\begin{lemma}[Zaremba's nodal bound]\label{lem:Q-supnorm}
For every \(0\leq k\leq n\) and every integer \(i\in\{0,\ldots,N\}\),
\begin{equation}\label{eq:Q-supnorm}
  |Q_k(i)|\leq1.
\end{equation}
\end{lemma}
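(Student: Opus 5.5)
The plan is to combine the symmetry of \(Q_k\) with a discrete Sonin-type (energy) argument driven by the second-order difference equation of the Hahn polynomials. Since the lemma carries Zaremba's name, the fallback is to quote~\cite{zaremba1975} directly; I would still try to reconstruct the argument.

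The first step is symmetry. From the orthogonality of \(Q_0,\dots,Q_n\) for the uniform measure on \(\{0,\dots,N\}\), which is invariant under \(x\mapsto N-x\), one gets \(Q_k(N-x)=(-1)^kQ_k(x)\). It therefore suffices to treat integers \(0\le i\le N/2\). At the endpoint, \(Q_k(0)=1\) by normalization, and \eqref{eq:Q-expansion} gives
\[
  Q_k(1)=1-\frac{k(k+1)}{N},
\]
which lies in \([-1,1]\) because \(k(k+1)\le n(n+1)\le N=n(n+2)\). This inequality is exactly where the choice~\eqref{eq:N-choice} enters.

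The second step uses the Hahn difference equation with \(\alpha=\beta=0\) and \(\lambda=k(k+1)\):
\[
  B(x)\bigl(Q(x+1)-Q(x)\bigr)-D(x)\bigl(Q(x)-Q(x-1)\bigr)=-\lambda Q(x),
\]
where \(B(x)=(x+1)(N-x)\) and \(D(x)=x(N+1-x)\). I would verify this from~\eqref{eq:Q-expansion} on falling factorials. Writing \(\Delta_i=Q(i+1)-Q(i)\), I would define an energy
\[
  E(i)=Q(i)Q(i+1)+\frac{B(i)}{\lambda}\,\Delta_i^2
\]
or a close variant, for example with \(\max(Q(i)^2,Q(i+1)^2)\) in place of the first term. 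The aim is to show that \(E(i+1)\le E(i)\) using the recurrence, as long as the ratio of \(B(i)\) to \(D(i+1)\) and the size of \(\lambda\) keep \(Q\) in the oscillatory regime. Monotonicity would then bound \(Q(i)^2\) by the value of the energy at the left endpoint. The first term is chosen as \(Q(0)Q(1)\) so that the endpoint value is
\[
  E(0)=1-\frac{\lambda}{N}+\frac{N}{\lambda}\cdot\frac{\lambda^2}{N^2}=1.
\]
By contrast, the naive choice \(Q(0)^2+\cdots\) gives \(1+\lambda/N\), which is not enough. This gives \(|Q_k(i)|\le1\) on \(0\le i\le N/2\), and symmetry extends it to all \(i\).

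The main obstacle is the monotonicity step. I need an exact algebraic identity expressing \(E(i)-E(i+1)\) as a nonpositive quantity, and the sign depends on comparing \(B(i)-D(i+1)\), which equals \(N-2i-1\ge0\) for \(i<N/2\), against the relevant terms. The risk is that the sign fails near the turning region, where \(\lambda\) is comparable to \(B(i)\). If no energy of this form works uniformly, I would fall back on one of two routes. The first is to prove the weaker statement needed later, namely a bound of the form \(|Q_k(i)|\le 1\) only under \(k(k+1)\le N\), which is exactly our regime. The second is to cite Zaremba's theorem~\cite{zaremba1975} that \(\max_i|Q_k(i)|=Q_k(0)\) in this range. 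In either case I would check that only integer nodes, never the whole interval, are needed downstream.
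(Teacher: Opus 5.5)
\textbf{Gap in the energy argument.} Your self-contained Sonin argument does not work, and the problem goes beyond the unverified monotonicity step. Use the difference equation at $x=i+1$ together with $D(i+1)=B(i)$. A direct computation then gives
\[
  E(i+1)-E(i)=\frac{N-2i-2}{B(i+1)}\,\Delta_i\Bigl(Q(i+1)-\frac{B(i)}{\lambda}\Delta_i\Bigr).
\]
This is positive whenever $\Delta_i$ has the sign of $Q(i+1)$ and $|\Delta_i|<\lambda|Q(i+1)|/B(i)$, that is, as $Q$ approaches an interior extremum. It actually occurs: for $N=24$ and $k=4$ one finds $E(4)-E(3)\approx0.0037>0$, just before the first trough. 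Replacing $Q^2$ by $Q(i)Q(i+1)$ leaves a sign-indefinite cross term, which has no counterpart in the continuous identity $f'=-p'Q'^2/\lambda$.

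A nonincreasing $E$ with $E(0)=1$ would still not prove the lemma. The energy can be rewritten as
\[
  E(i)=\tfrac14\bigl(Q(i)+Q(i+1)\bigr)^2+\Bigl(\frac{B(i)}{\lambda}-\frac14\Bigr)\Delta_i^2,
\]
so it controls only the average of neighbouring values and does not dominate $Q(i)^2$. Already for $N=8$ and $k=2$ one has $E(3)=E(4)=185/392<25/49=Q(4)^2$. The best you can extract, by Cauchy--Schwarz, is $|Q(i)|\le\sqrt{4B(i)/(4B(i)-\lambda)}$, which is strictly larger than $1$. The lemma, however, asserts the sharp constant $Q_k(0)=1$. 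So the energy route cannot be repaired by verifying one inequality; it would need a genuinely different Lyapunov function.

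\textbf{The fallbacks.} Your first fallback is not a weaker statement. For $k\le n$ the condition $k(k+1)\le N$ always holds, so ``$|Q_k(i)|\le1$ under $k(k+1)\le N$'' is just the lemma itself. What remains is your second fallback, citing Zaremba, and that is exactly the paper's proof. In the largest-grid-index convention, the paper's $Q_k(x;0,0,N+1)$ is Zaremba's $Q_k(x,0,0,N)$. Zaremba's nodal estimate gives $|Q_k(i)|\le1$ at every integer $0\le i\le N$ whenever $k(k+1)\le N$, and here $k(k+1)\le n(n+1)<n(n+2)=N$; the case $k=0$ is trivial. Your symmetry observation and the value $Q_k(1)=1-k(k+1)/N$ are correct but not needed. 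The lemma should be presented as this citation plus the one-line check, with the energy argument dropped.
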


\begin{proof}
In the largest-grid-index convention used by Zaremba and Dette, our
\(Q_k(x;0,0,N+1)\) is \(Q_k(x,0,0,N)\).  Zaremba's nodal estimate
\cite{zaremba1975} (see also~\cite[Remark~3.3]{dette1995}) gives
\(|Q_k(i)|\leq1\) at every integer \(0\leq i\leq N\) whenever
\[
  k(k+1)\leq N.
\]
Here
\[
  k(k+1)\leq n(n+1)<n(n+2)=N.
\]
The case \(k=0\) is immediate.
\end{proof}

Wilson's aim was to quantify how closely Hahn polynomials on a uniform grid
imitate their Jacobi limits~\cite{wilson-hahn}.  He developed an expansion in
inverse powers of the grid size and used it to compare discrete and continuous
least-squares expansions.  In the Legendre case he isolated the integrals
\(I_k\): they measure the failure of the nonconstant discrete orthogonal basis
to have zero continuous mean, and they also enter quadrature formulas on
equidistant nodes.  In our argument they are precisely the coefficients that
must be controlled in the projection weights below.

\begin{lemma}[Wilson's integral estimate]\label{lem:wilson-integral}
Let \(N\geq1\), let \(0\leq k\leq N\), and let \(Q_k\) and \(I_k\) be as
above.  If \(k\) is odd, then \(I_k=0\).  If \(k\geq2\) is even and
\(N+1\geq(k+1)^2\), then
\begin{equation}\label{eq:I-wilson}
  0<-I_k\leq\frac{N+1}{N}
  \left(1+\frac{(k-1)(k+2)}{4(N-1)}\right).
\end{equation}
Consequently, for the choice \(N=n(n+2)\),
\begin{equation}\label{eq:I-bound}
  |I_k|<\frac32\qquad(1\leq k\leq n).
\end{equation}
\end{lemma}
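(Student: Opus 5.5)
This is Wilson's estimate, so the plan is to take the inequality~\eqref{eq:I-wilson} from~\cite{wilson-hahn} as the main input. The work left to us is the easy parity statement and the specialization to \(N=n(n+2)\), which gives~\eqref{eq:I-bound}.

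\emph{Odd \(k\).} We use the reflection symmetry of the Hahn polynomials with \(\alpha=\beta=0\):
\[
  Q_k(N-x)=(-1)^kQ_k(x).
\]
This identity holds at the grid points: the reflection \(i\mapsto N-i\) preserves the uniform measure on \(\{0,\ldots,N\}\), so the polynomials \(Q_k(N-x)\) are again orthogonal. Degree-\(k\) orthogonal polynomials are unique up to a scalar, so \(Q_k(N-x)=c\,Q_k(x)\) for some constant \(c\). Comparing leading coefficients in~\eqref{eq:Q-expansion} gives \(c=(-1)^k\). Both sides are polynomials of degree \(k\leq N\) that agree at \(N+1\) points, so the identity holds for all real \(x\). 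For odd \(k\), substituting \(x\mapsto N-x\) in \(I_k=\int_0^NQ_k\) gives \(I_k=-I_k\), hence \(I_k=0\).

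\emph{Even \(k\).} Here we quote Wilson's asymptotic/explicit bound. He writes \(Q_k\) as a perturbation of the shifted Legendre polynomial \(P_k(2x/N-1)\), whose integral vanishes for \(k\geq1\). The leading correction is of order \(1/N\) times a quantity that is explicitly negative. Under \(N+1\geq(k+1)^2\) the remainder is controlled, which gives the bracket \(1+\frac{(k-1)(k+2)}{4(N-1)}\).

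If a self-contained route is wanted, I would integrate~\eqref{eq:Q-expansion} term by term, using
\[
  \int_0^N\fall{x}{a}\,dx=\sum_i\fall{N}{a}\cdot(\text{known Stirling-type constants}),
\]
and compare the result with the discrete identity \(\sum_{i=0}^NQ_k(i)=0\) via Euler--Maclaurin. The Euler--Maclaurin endpoint corrections involve \(Q_k(0)=Q_k(N)=1\) together with derivative terms. This comparison is where the sign and the bound come from, and it is the main obstacle: controlling the higher Euler--Maclaurin terms uniformly needs the condition \(k(k+1)\lesssim N\). The leading term is \(-(Q_k(0)+Q_k(N))/2=-1\), multiplied by \((N+1)/N\)-type normalizations, which is consistent with~\eqref{eq:I-wilson}.

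\emph{Specialization.} Set \(N=n(n+2)\). For \(1\leq k\leq n\) we have \((k+1)^2\leq(n+1)^2=N+1\), so the hypothesis of~\eqref{eq:I-wilson} holds. The case \(k\) odd gives \(I_k=0\). For even \(k\geq2\) with \(k\leq n\), we have \(n\geq2\) and \(N\geq8\). The bound uses \((k-1)(k+2)\leq(n-1)(n+2)<N\), so
\[
  \frac{(k-1)(k+2)}{4(N-1)}\leq\frac{N-2}{4(N-1)}<\frac14.
\]
Together with \((N+1)/N\leq 9/8\) this gives \(|I_k|<\frac98\cdot\frac54=\frac{45}{32}<\frac32\), which is~\eqref{eq:I-bound}.
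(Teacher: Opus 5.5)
Your proposal is correct and follows the paper's proof. You use the reflection identity $Q_k(N-x)=(-1)^kQ_k(x)$ for odd $k$; your uniqueness-of-orthogonal-polynomials justification is a welcome detail the paper omits. You cite Wilson for both the sign and the upper bound when $k$ is even, and you use the same arithmetic to reach $|I_k|<45/32<3/2$.

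One cosmetic slip: $(n-1)(n+2)<N$ by itself only yields $\le N-1$. In fact $(n-1)(n+2)=N-n-2\le N-2$, and the non-strict bound $\frac14$ would suffice anyway.
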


\begin{proof}
The reflection identity \(Q_k(N-x)=(-1)^kQ_k(x)\) gives
\(I_k=(-1)^kI_k\), and hence the odd case.  Wilson writes our \(I_k\) as
\(I_k(N+1)\).  For even \(k\), his lower estimate shows that \(-I_k>0\),
while his non-asymptotic upper estimate is exactly~\eqref{eq:I-wilson}; see
\cite[Sec.~3, Eqs.~(3.1) and (3.14)]{wilson-hahn}.

For the consequence, when \(n=1\) the only index is \(k=1\), which is odd.
If \(n\geq2\), then
\(N+1=(n+1)^2\geq(k+1)^2\), and
\[
  \frac{N+1}{N}\leq\frac98,
  \qquad
  \frac{(k-1)(k+2)}{4(N-1)}
  \leq\frac{(n-1)(n+2)}{4(N-1)}<\frac14.
\]
Thus \(|I_k|<45/32<3/2\) for even \(k\), while the odd integrals vanish.
\end{proof}

Wilson's estimates apply to finite grids.  Using
a recurrence due to Levit, he rewrites \(I_k\) as a weighted integral of a
neighboring Hahn polynomial; its zeros yield convexity and endpoint bounds,
and a coefficient-ratio argument turns those bounds into a geometric-series
estimate.  This quadratic relation between grid size and degree allows
\(k\) to grow with \(n\).

\subsection{Positive exact weights}

The following formula is the discrete least-squares projection formula for
quadrature weights used by Wilson~\cite{wilson-least-squares}: it expands the
integral functional in the orthogonal basis \(Q_0,\ldots,Q_n\).
For \(0\leq i\leq N\), put
\begin{equation}\label{eq:W-def}
  W_i=\sum_{k=0}^n\frac{I_k}{H_k}Q_k(i).
\end{equation}

\begin{proposition}[Exactness and positivity]\label{prop:quadrature}
The weights \(W_0,\ldots,W_N\) are strictly positive and satisfy
\begin{equation}\label{eq:quadrature-N}
  \sum_{i=0}^NW_ip(i)=\int_0^Np(x)\,dx
  \qquad\text{for every polynomial \(p\) with \(\deg p\leq n\)}.
\end{equation}
Moreover, \(\sum_iW_i=N\).
\end{proposition}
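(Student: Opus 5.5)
Exactness comes first, and it is pure linear algebra. Since \(\deg Q_k=k\) (the leading coefficient in~\eqref{eq:Q-expansion} is nonzero because \(\fall{N}{k}\neq0\) for \(k\leq n<N\)), the polynomials \(Q_0,\ldots,Q_n\) form a basis of the polynomials of degree at most \(n\). By linearity it therefore suffices to check~\eqref{eq:quadrature-N} for \(p=Q_m\), \(0\leq m\leq n\). Substituting~\eqref{eq:W-def} and using discrete orthogonality gives
\[
  \sum_{i=0}^NW_iQ_m(i)
  =\sum_{k=0}^n\frac{I_k}{H_k}\sum_{i=0}^NQ_k(i)Q_m(i)
  =\frac{I_m}{H_m}H_m=I_m=\int_0^NQ_m(x)\,dx .
\]
Taking \(p=1=Q_0\) then yields \(\sum_iW_i=I_0=N\).

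Positivity is the substantive part, and it follows from the two estimates just recorded. The \(k=0\) term is \(I_0/H_0=N/(N+1)\), because \(Q_0\equiv1\) and \(H_0=N+1\). I will bound the other terms crudely. By Lemma~\ref{lem:Q-supnorm}, \(|Q_k(i)|\leq1\). By~\eqref{eq:H-lower}, \(1/H_k\leq(2k+1)/(N+1)\). By Lemma~\ref{lem:wilson-integral}, \(I_k=0\) for odd \(k\) and \(|I_k|<3/2\) for even \(k\geq2\). Together these give
\[
  W_i\geq\frac{N}{N+1}-\frac{3}{2(N+1)}\sum_{\substack{2\leq k\leq n\\ k\text{ even}}}(2k+1).
\]

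It remains to show that the right-hand side is positive, that is,
\[
  \frac32\sum_{\substack{2\leq k\leq n\\ k\text{ even}}}(2k+1)<N=n(n+2).
\]
The even-index sum is about \(n^2/2\). More precisely, over \(k=2,4,\ldots,2m\) with \(2m\leq n\) it equals \(2m(m+1)+m=2m^2+3m\). Times \(3/2\) this is \(3m^2+4.5m\), while \(N\geq2m(2m+2)=4m^2+4m\). The difference \(4m^2+4m-3m^2-4.5m=m^2-0.5m\) is positive for \(m\geq1\). For \(n\leq1\) the sum is empty, so \(W_i=N/(N+1)>0\) trivially.

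This shows that the choice \(N=n(n+2)\) is exactly what makes both Zaremba's bound and Wilson's bound available. The only real obstacle is keeping the constants honest; the structural steps are immediate. If the crude bound had failed, the fallback would be to use the sign information \(-I_k>0\) or the sharper \(45/32\), but the computation above indicates this is unnecessary.
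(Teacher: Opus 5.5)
Your proposal is correct and follows essentially the same route as the paper: exactness via discrete orthogonality in the Hahn basis, and positivity by bounding the even nonconstant terms with Zaremba's nodal bound, \(H_k\geq(N+1)/(2k+1)\), and Wilson's estimate \(|I_k|<3/2\). The only difference is cosmetic. You compare \(3m^2+\tfrac92m\) with \(4m^2+4m\), whereas the paper bounds \(2m^2+3m\leq n(n+3)/2\) and obtains the explicit bound \(W_i>n(n-1)/(4(N+1))\).
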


\begin{proof}
For \(0\leq\ell\leq n\), discrete orthogonality gives
\[
  \sum_{i=0}^NW_iQ_\ell(i)
  =\sum_{k=0}^n\frac{I_k}{H_k}
    \sum_{i=0}^NQ_k(i)Q_\ell(i)
  =I_\ell.
\]
The \(Q_\ell\) form a basis for the polynomials of degree at most \(n\), which
proves exactness.  Taking \(p=1\) yields \(\sum_iW_i=N\).

It remains to prove positivity.  Since \(Q_0=1\), \(I_0=N\), and
\(H_0=N+1\), the \(k=0\) term in~\eqref{eq:W-def} is \(N/(N+1)\).
If \(n=1\), then \(I_1=0\), so every \(W_i=3/4>0\).  Suppose now that
\(n\geq2\), and put \(m=\lfloor n/2\rfloor\).  Since all odd \(I_k\) vanish,
Lemma~\ref{lem:Q-supnorm}, \eqref{eq:I-bound}, and~\eqref{eq:H-lower}
give, uniformly in \(0\leq i\leq N\),
\begin{align*}
  \left|W_i-\frac{N}{N+1}\right|
  &\leq \sum_{\substack{2\leq k\leq n\\ k\ \mathrm{even}}}
       \frac{|I_k|\,|Q_k(i)|}{H_k}\\
  &<\frac{3}{2(N+1)}\sum_{j=1}^{m}(4j+1)\\
  &=\frac{3(2m^2+3m)}{2(N+1)}\\
  &\leq\frac{3n(n+3)}{4(N+1)}.
\end{align*}
Consequently,
\[
  W_i>\frac{N-3n(n+3)/4}{N+1}
      =\frac{n(n-1)}{4(N+1)}>0.
\]
\end{proof}

After scaling from \([0,N]\) to \([0,1]\), the normalized weights
\(w_i=W_i/N\) satisfy
\begin{equation}\label{eq:quadrature-unit}
  \sum_{i=0}^Nw_iq(i/N)=\int_0^1q(x)\,dx
  \qquad(\deg q\leq n).
\end{equation}

\subsection{A small common denominator}

Positivity alone is not enough: the gap lengths must be integers.  We now
bound a common denominator explicitly.  Let
\[
  A_k=\fall{N}{k}=N(N-1)\cdots(N-k+1),
  \qquad
  B_k=(N+2)(N+3)\cdots(N+k+1),
\]
with \(A_0=B_0=1\), and let
\[
  L=\lcm(1,2,\ldots,n+1).
\]
Formula~\eqref{eq:Q-expansion} shows that
\begin{equation}\label{eq:Q-denom}
  A_kQ_k(x)\in\mathbb Z[x],
  \qquad Q_k(i)\in A_k^{-1}\mathbb Z\quad(i\in\mathbb Z).
\end{equation}
Integrating the integer-coefficient polynomial \(A_kQ_k(x)\) term by term
between integer endpoints introduces only denominators \(1,\ldots,k+1\).
Thus
\begin{equation}\label{eq:I-denom}
  I_k\in(A_kL)^{-1}\mathbb Z.
\end{equation}
On the other hand,~\eqref{eq:H-norm} says
\[
  H_k=\frac{(N+1)B_k}{(2k+1)A_k}.
\]
Combining these facts gives
\begin{equation}\label{eq:term-denom}
  \frac{I_k}{H_k}Q_k(i)
  \in\bigl((N+1)LA_kB_k\bigr)^{-1}\mathbb Z.
\end{equation}
Since \(A_k\mid A_n\) and \(B_k\mid B_n\), the single integer
\begin{equation}\label{eq:M-def}
  M=(N+1)LA_nB_n
\end{equation}
clears all the weights:
\begin{equation}\label{eq:MW-integer}
  MW_i\in\mathbb Z_{>0}\qquad(0\leq i\leq N).
\end{equation}

Finally, \(L\leq(n+1)!\), \(A_n\leq N^n\), and
\(B_n\leq(2N)^n\).  Therefore
\begin{equation}\label{eq:M-bound}
  M\leq2^{n+1}(n+1)^{n+1}N^{2n+1}=2^{O(n\log n)}.
\end{equation}

\subsection{Reusing one rule at every induction level}

\begin{theorem}[Hahn construction]\label{thm:main}
For every \(n\geq1\), there are \(n\) equal-sided permutation-fair dice
with \(F_n\) faces each, where, for \(N=n(n+2)\),
\begin{equation}\label{eq:F-explicit}
  \boxed{\begin{aligned}
  F_n={}&(N+1)\lcm(1,\ldots,n+1)\fall{N}{n}\\
        &{}\cdot(N+2)(N+3)\cdots(N+n+1)N^{n-1}.
  \end{aligned}}
\end{equation}
In particular,
\[
  F_n=2^{O(n\log n)}.
\]
The corresponding string is \((\leq n,n)\)-fair.
\end{theorem}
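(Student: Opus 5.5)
The plan is induction on the number of letters, applying Lemma~\ref{lem:insertion} at every stage with the single rule \(W_0,\ldots,W_N\) from Proposition~\ref{prop:quadrature}. The gap lengths are scaled so that the new letter gets exactly as many occurrences as each old letter. Let \(M\) be as in~\eqref{eq:M-def} and \(N=n(n+2)\).

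\emph{Base case.} Put \(s_1=1^{M}\). It is trivially \(1\)-fair and has equal composition, with \(F_1=M\).

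\emph{Inductive step.} Suppose \(s_r\) (\(1\le r<n\)) is an equal-composition \(r\)-fair string in which every letter occurs \(F_r\) times. Suppose also that \(M\mid F_r\); this holds at \(r=1\) and is preserved, as checked below. Form \(T\) as in~\eqref{eq:interlace} with \(N\) copies of \(s_r\) and gap lengths
\[
  k_i=F_rW_i=\frac{F_r}{M}\,(MW_i),\qquad 0\le i\le N .
\]
By~\eqref{eq:MW-integer} each \(k_i\) is a positive integer. Using \(\sum_iW_i=N\), we get \(K=\sum_ik_i=NF_r\), so the normalized weights are \(w_i=k_i/K=W_i/N\).

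Since \(b_{r,j}\) has degree \(r\le n-1\le n\), the rescaled exactness~\eqref{eq:quadrature-unit} together with~\eqref{eq:bernstein-integral} gives~\eqref{eq:insertion-moments} for all \(0\le j\le r\). Lemma~\ref{lem:insertion} then shows that \(s_{r+1}:=T\) is \((r+1)\)-fair.

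For the composition, each old letter occurs \(NF_r\) times (\(N\) copies of \(s_r\)), and the new letter occurs \(K=NF_r\) times. So \(s_{r+1}\) again has equal composition, with \(F_{r+1}=NF_r\). This is still a multiple of \(M\), so the induction continues.

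\emph{Conclusion.} After \(n-1\) steps we get
\[
  F_n=MN^{n-1}=(N+1)\lcm(1,\ldots,n+1)\,\fall{N}{n}\,B_n\,N^{n-1},
\]
which is exactly~\eqref{eq:F-explicit}. The resulting string \(s_n\) is \(n\)-fair with equal composition, so it gives \(n\) equal-sided dice with \(F_n\) faces by Lemma~\ref{lem:dice-string}. It is \((\le n,n)\)-fair by Lemma~\ref{lem:equal-full-to-partial}.

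For the size bound, combine~\eqref{eq:M-bound} with \(N^{n-1}=2^{O(n\log n)}\), using \(N=O(n^2)\). This gives \(F_n=2^{O(n\log n)}\).

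The only real content is the integrality and bookkeeping. The scaling \(k_i=F_rW_i\) must simultaneously make every gap an integer, which is why \(M\mid F_r\) is carried through the induction, and equalize the new letter's count with the old ones, which uses \(\sum_iW_i=N\). The heavy lifting was already done in Proposition~\ref{prop:quadrature} and~\eqref{eq:MW-integer}. One point needs a check: a single rule of degree \(n\) serves all stages \(r<n\). This holds because exactness for degree \(\le n\) covers every Bernstein polynomial of degree \(r\le n-1\).
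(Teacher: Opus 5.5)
Your proposal is correct and follows the paper's own proof essentially step for step. Both start from $s_1=1^M$, insert runs $k_i=F_rW_i$ around $N$ copies of $s_r$, and carry $F_r=MN^{r-1}$ so that integrality follows from $M\mid F_r$ and equal composition from $\sum_iW_i=N$; the single degree-$n$ rule then serves Lemma~\ref{lem:insertion} at every stage, and the argument closes with Lemma~\ref{lem:equal-full-to-partial} and the bound~\eqref{eq:M-bound}.
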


\begin{proof}
Use the positive weights and the denominator \(M\) constructed above, and
start with
\[
  s_1=1^M.
\]
Inductively suppose \(s_r\) is \(r\)-fair and every symbol occurs
\[
  F_r=MN^{r-1}
\]
times.  Around \(N\) copies of \(s_r\), put in gap \(i\) a run of the new
symbol of length
\begin{equation}\label{eq:k-i}
  k_i=F_rW_i.
\end{equation}
These numbers are positive integers because \(F_r\) is divisible by \(M\).
By Proposition~\ref{prop:quadrature},
\[
  \sum_{i=0}^Nk_i=F_r\sum_{i=0}^NW_i=NF_r.
\]
Each old symbol also occurs \(NF_r\) times in the \(N\) copies.  Thus every
symbol in the new string occurs
\[
  F_{r+1}=NF_r=MN^r
\]
times.  The normalized gap weights are \(k_i/(NF_r)=W_i/N\), and
\eqref{eq:quadrature-unit} is exact for all degree-\(r\) Bernstein
polynomials.  Lemma~\ref{lem:insertion} therefore proves that the new string
is \((r+1)\)-fair.

After \(n-1\) insertion steps, every symbol occurs \(MN^{n-1}\) times.
Substituting the definition~\eqref{eq:M-def} of \(M\) gives
\eqref{eq:F-explicit}.  Using \(N+1\leq2N\) and the estimates above,
\[
  F_n\leq2^{n+1}(n+1)^{n+1}N^{3n}=2^{O(n\log n)}.
\]
The total string length is \(nF_n=2^{O(n\log n)}\).  Since all letter
multiplicities are equal, Lemma~\ref{lem:equal-full-to-partial} also makes the
final string \((\leq n,n)\)-fair.
\end{proof}

The rule in Theorem~\ref{thm:main} uses exactly \(N+1=(n+1)^2\) grid
points.  The quadratic grid changes the constants in the construction but
not its \(2^{O(n\log n)}\) asymptotic face bound.

\begin{remark}[A three-die check]
The strings \(v_2\) and \(v_3\) in Section~\ref{sec:results} use Simpson's
degree-three rule on \(0,1,2\) in place of the general-purpose rule:
\[
  (W_0,W_1,W_2)=\left(\frac13,\frac43,\frac13\right),
  \qquad N=2.
\]
Both gap vectors normalize to
\((1/6,2/3,1/6)=(W_0,W_1,W_2)/N\).
\end{remark}

\section{Sizes of permutation-fair dice}\label{sec:lower}

Divisibility across restrictions forces exponentially many faces on most
dice. We first prove that statement, then determine the minimum possible
size of one prescribed die.

\begin{theorem}[Many dice are large]\label{thm:many-large}
There is an absolute constant \(c>0\) such that, for every fixed
\(0<\varepsilon<1\) and all sufficiently large \(n\), every
permutation-fair collection with face counts \(k_1,\ldots,k_n\) satisfies
\[
  \bigl|\{i\in[n]: k_i\geq2^{c\varepsilon n}\}\bigr|
  \geq (1-\varepsilon)n.
\]
\end{theorem}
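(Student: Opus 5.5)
The approach is to turn the divisibility constraint coming from restrictions into a double-counting argument over a window of primes of size about \(\varepsilon n\).

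\textbf{Divisibility step.} Let \(p\le n\) be prime and let \(S\) be any set of \(p\) dice. By Claim~\ref{clm:restriction-blowup}(i), restricting the string to \(S\) gives a \(p\)-fair string. Its \(p!\) full patterns have a common count \(C_S\), and together they account for all \(\prod_{i\in S}k_i\) face selections. Hence \(\prod_{i\in S}k_i=p!\,C_S\), so \(p\) divides this product and therefore divides some \(k_i\) with \(i\in S\). Consequently, for every prime \(p\le n\), the set of dice with \(p\nmid k_i\) has at most \(p-1\) elements.

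\textbf{Double counting over a prime window.} Fix a parameter \(Q\), to be chosen of order \(\varepsilon n\), and let \(\mathcal P\) be the primes in \([Q,2Q]\). By Chebyshev-type bounds (or the prime number theorem), for large \(Q\) we have \(|\mathcal P|\ge Q/(2\log Q)\). Call die \(i\) \emph{small} if \(k_i<2^{c\varepsilon n}\). A small die can be divisible by at most \(c\varepsilon n/\log_2 Q\) primes of \(\mathcal P\), because each such prime is at least \(Q\). Now count the pairs \((i,p)\) with \(i\) small, \(p\in\mathcal P\) and \(p\nmid k_i\). By the divisibility step, there are at most \(\sum_{p\in\mathcal P}(p-1)\le 2Q|\mathcal P|\) such pairs. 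On the other hand, each small die contributes at least \(|\mathcal P|-c\varepsilon n/\log_2 Q\) pairs.

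\textbf{Choosing the parameters.} Take \(Q=\varepsilon n/8\), and choose \(c\) small and absolute. Then \(c\varepsilon n/\log_2 Q=8cQ/\log_2 Q\le |\mathcal P|/2\) for large \(n\), using the lower bound on \(|\mathcal P|\) and \(c\) small. So each small die fails at least \(|\mathcal P|/2\) primes. Comparing with the upper bound \(2Q|\mathcal P|\) on the number of pairs gives \(\#\{\text{small dice}\}\le 4Q=\varepsilon n/2\le\varepsilon n\), which proves the theorem. For fixed \(\varepsilon\), the requirements \(2Q\le n\) and \(Q\) large enough for the prime estimate both hold once \(n\) is sufficiently large.

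\textbf{Main obstacle.} The delicate point is the choice of primes. If one uses all primes up to \(P\), the count picks up a \(\log n\) loss: the sum \(\sum p\) is about \(P^2/\log P\), while a small die can be divisible by as many as \(c\varepsilon n/\log 2\) small primes. Restricting to a dyadic window \([Q,2Q]\) removes this loss, because each prime in the window costs about \(\log Q\) bits of \(k_i\), and that factor exactly balances the density \(1/\log Q\) of primes in the window. The only external input is a uniform lower bound for the number of primes in \([Q,2Q]\), which I would quote from Chebyshev's estimates.
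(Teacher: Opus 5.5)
Your argument is correct, but it is organized differently from the paper's. The paper works in two stages. First, Lemma~\ref{lem:exp-lower} uses every prime \(p\le n/2\). At least \(n/2\) multiplicities are divisible by each such \(p\), so \(\prod_i c_i\ge\prod_{p\le n/2}p^{n/2}=2^{\Omega(n^2)}\). This only yields \(\max_i c_i\ge2^{\Omega(n)}\). The theorem then follows by a monotonicity transfer. Restrict to the \(r=\lfloor\varepsilon n\rfloor\) least frequent letters, and apply the lemma to that \(r\)-fair string to find one of them with at least \(2^{c_0r}/r\) occurrences. Each of the other \(n-r\) letters occurs at least as often.

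You skip this transfer and bound the number of small dice directly. You double count non-divisibility pairs over the primes in \([Q,2Q]\) with \(Q\asymp\varepsilon n\). This gives a one-step proof with explicit constants, and in fact at most \(\varepsilon n/2\) small dice. The cost is that you need a lower bound of order \(Q/\log Q\) on the number of primes in a dyadic interval. That bound is classical (Chebyshev), but slightly finer than the bare primorial estimate \(\prod_{p\le x}p=2^{\Theta(x)}\) the paper uses. The paper's route also yields the single-die bound as a reusable lemma.

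The \(\log n\) loss you describe in your last paragraph comes from counting primes without weights. After taking logarithms, the paper's product bound is a \(\log p\)-weighted count of the pairs \((i,p)\) with \(p\mid c_i\). A similarly weighted version of your double count, over all primes up to a small multiple of \(\varepsilon n\), would also work without a dyadic window.
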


\subsection{Divisibility across restrictions}

The first argument is arithmetic.  Divisibility obstructions are classical
in the fair-dice literature; in the equal-sided setting, the corresponding
primorial condition is recorded by Purcell~\cite{purcell}.  Applying the same
idea after restriction to many different subsets gives a useful bound even
when the dice have unequal numbers of faces.

\begin{lemma}\label{lem:exp-lower}
Every \(n\)-fair string has length \(2^{\Omega(n)}\).
\end{lemma}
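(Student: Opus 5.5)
The plan is to prove the divisibility fact from the introduction: for any set \(S\) of \(p\) letters with \(p\) prime, the prime \(p\) divides \(\prod_{i\in S}c_i\). Applying this to many primes simultaneously then forces the length \(|s|=\sum_i c_i\) to be exponential.

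\textbf{The divisibility step.} Let \(s\) be \(n\)-fair and let \(S\subseteq[n]\) have size \(p\). By Claim~\ref{clm:restriction-blowup}(i), the restriction \(s|_S\) is \(p\)-fair. Every choice of one occurrence of each letter of \(S\) sorts into exactly one pattern, so
\[
  \prod_{i\in S}c_i=p!\,C',
\]
where \(C'\) is the common full-pattern count of \(s|_S\). Hence \(p\) divides \(\prod_{i\in S}c_i\), and since \(p\) is prime, \(p\mid c_i\) for some \(i\in S\).

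\textbf{A covering property for each prime.} Fix a prime \(p\leq n\). The letters whose multiplicity is \emph{not} divisible by \(p\) form a set containing no \(p\)-subset. So at most \(p-1\) letters have \(p\nmid c_i\), and at least \(n-p+1\) letters have \(p\mid c_i\).

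\textbf{Turning this into a length bound.} Take all primes \(p\leq n/2\). For each such prime, at least \(n/2\) letters satisfy \(p\mid c_i\). Double counting pairs \((p,i)\) with \(p\mid c_i\) gives at least \((n/2)\,\pi(n/2)\) pairs. Therefore some letter \(i\) is divisible by at least \(\pi(n/2)/2\) of these primes, and so \(c_i\) is at least their product.

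This is still too weak as stated: the product of the smallest \(\pi(n/2)/2\) primes is only bounded below by \(2^{\pi(n/2)/2}\), which is \(2^{\Omega(n/\log n)}\). The fix is to count by logarithmic weight instead of by number of primes:
\[
  \sum_i\log c_i\;\geq\;\sum_{p\leq n/2}\,(n/2)\log p\;=\;(n/2)\,\vartheta(n/2)\;=\;\Omega(n^2).
\]
By Chebyshev's bound \(\vartheta(x)=\Theta(x)\), the average of \(\log c_i\) is \(\Omega(n)\). Hence some \(c_i\) satisfies \(c_i\geq2^{\Omega(n)}\), and so \(|s|\geq c_i=2^{\Omega(n)}\).

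The expected main obstacle is exactly this step: one must aggregate across primes by \(\log p\) rather than by prime count to avoid losing a \(\log n\) factor, and this needs Chebyshev's estimate on \(\vartheta\).

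The same weighted count also yields the stronger Theorem~\ref{thm:many-large}. Each \(c_i\) satisfies \(\log c_i\leq\log|s|\), but the sum of the \(\log c_i\) is \(\Omega(n^2)\), which limits how many dice can be small.
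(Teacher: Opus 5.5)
Your proof is correct and follows the paper's route: restricting to $p$ letters gives $p\mid\prod_{i\in S}c_i$, so at most $p-1$ multiplicities avoid $p$. Your log-weighted count is the paper's bound $\prod_i c_i\geq\prod_{p\leq n/2}p^{n/2}$, combined with the primorial (Chebyshev) estimate. One caveat on your closing aside: $\log c_i\leq\log|s|$ does not by itself limit how many dice are small, so Theorem~\ref{thm:many-large} does not follow that way; the paper instead applies this lemma to the restriction to the $\lfloor\varepsilon n\rfloor$ least frequent letters and uses that every other letter occurs at least as often.
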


\begin{proof}
Let \(c_i=C_i(s)\).  Restrict \(s\) to a set \(S\) of \(p\) letters, where
\(p\leq n\) is prime.  The restricted string remains \(p\)-fair.  If \(C\) is
its common full-pattern count, then
\[
  \prod_{i\in S}c_i=p!C.
\]
Hence at least one \(c_i\), \(i\in S\), is divisible by \(p\).  Since this
holds for every \(p\)-subset, at most \(p-1\) of the \(n\) numbers \(c_i\) can
fail to be divisible by \(p\).  Consequently,
\[
  \prod_{i=1}^nc_i
  \geq\prod_{\substack{p\leq n/2\\p\ \mathrm{prime}}}p^{n/2}.
\]
The primorial estimate
\(\prod_{p\leq x}p=2^{\Theta(x)}\)~\cite{finch2003} now gives
\(\prod_i c_i=2^{\Omega(n^2)}\).  Therefore
\(\max_i c_i\geq2^{\Omega(n)}\), and the string is at least that long.
\end{proof}

\begin{proof}[Proof of Theorem~\ref{thm:many-large}]
Let \(r=\lfloor\varepsilon n\rfloor\), and restrict a fair string to its
\(r\) least frequent symbols.  By Lemma~\ref{lem:exp-lower}, the restricted
string has length at least \(2^{c_0r}\) for some absolute \(c_0>0\), so one
of these \(r\) symbols occurs at least \(2^{c_0r}/r\) times.  Each of the
other \(n-r\) symbols occurs at least as often.  For fixed \(\varepsilon\)
and sufficiently large \(n\),
\[
  \frac{2^{c_0r}}{r}\geq2^{(c_0/4)\varepsilon n}
  \qquad\text{and}\qquad
  n-r\geq(1-\varepsilon)n.
\]
This proves Theorem~\ref{thm:many-large} with \(c=c_0/4\).
\end{proof}

It is important not to overinterpret this argument.  For every subset
\(S\subseteq[n]\), restriction and permutation fairness imply the basic
divisibility condition
\[
  |S|!\ \bigm|\ \prod_{i\in S}c_i.
\]
These conditions alone are compatible with one prescribed die having a
single face: the multiplicity vector
\[
  (1,n!,n!,\ldots,n!)
\]
satisfies all of them. 
\subsection{The optimal size of one die}\label{individual-section}

The arithmetic lower bound forces almost all dice to be exponentially
large, but it leaves room for a prescribed die to be much smaller.  The
following theorem determines the optimal order of its size.

\begin{theorem}[The size of one die]\label{thm:individual-size}
Every die in a permutation-fair family of \(n\geq2\) dice has at least
\((n-1)^2/200\) faces.  Conversely, there is an absolute constant \(C>0\)
such that, for every \(n\geq2\) and every integer \(K\geq Cn^2\), there
is a permutation-fair family in which a specified die has exactly \(K\)
faces.  The remaining \(n-1\) dice can all have the same finite number of
faces.
\end{theorem}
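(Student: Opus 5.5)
Both directions rest on a single reduction: the insertion step of Lemma~\ref{lem:insertion}, read in reverse, turns the prescribed die into a quadrature rule. Relabel so that the prescribed die is letter \(n\) with \(K\) faces, and let \(s\) be a permutation-fair string. By Claim~\ref{clm:restriction-blowup}, the restriction \(s'\) of \(s\) to letters \(1,\ldots,n-1\) is \((n-1)\)-fair. Because the other multiplicities may differ, Lemma~\ref{lem:equal-full-to-partial} does not apply directly. Instead I would use Lemma~\ref{lem:partial}: for a prefix \(u\) of \(s'\), the counts \(C_\pi(u)\) of partial patterns are not constant, but full fairness of \(s\) forces, for each rank \(j+1\) of letter \(n\), that
\[
  \sum_{\text{occurrences of } n}C_{\pi[1:j]}(\text{prefix})\,C_{\pi[j+1:n-1]}(\text{suffix})
\]
is independent of \(\pi\). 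Averaging over \(\pi\) with fixed prefix letter set \(S\), the prefix/suffix products become \(\prod_{i\in S}a_i\prod_{i\notin S}(c_i-a_i)/(j!(n-1-j)!)\), where \(a_i\) is the number of occurrences of letter \(i\) in the prefix. The \(K\) occurrences of \(n\) therefore define \(K\) points \(x^{(t)}\in[0,1]^{n-1}\), with \(x_i=a_i/c_i\), carrying equal weights \(1/K\). For every \(S\) these points must reproduce
\[
  \int_0^1 x^{|S|}(1-x)^{n-1-|S|}\,dx .
\]
The points actually satisfy the stronger condition of reproducing \(\prod_{i\in S}x_i\prod_{i\notin S}(1-x_i)\) as if \(x\) lay on the diagonal with uniform law; averaging over \(S\) of fixed size is the step that lets me treat them this way.

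\textbf{Lower bound.} Symmetrize over \(S\) of size \(j\). The elementary symmetric means of the coordinates of each point \(x^{(t)}\) then give an equal-weight rule that integrates every Bernstein polynomial of degree \(n-1\). I would reduce this to a genuine one-dimensional statement by taking the points \(y_t=\frac1{n-1}\sum_i x^{(t)}_i\), or better by a convexity/Maclaurin argument: the symmetric means \(e_j(x)/\binom{n-1}{j}\) are dominated by the corresponding powers of the mean. The aim is an equal-weight (Chebyshev-type) rule, with \(K\) nodes, that is exact up to degree \(\Omega(n)\) for \(\int_0^1\), or at least satisfies one-sided moment inequalities. Bernstein's classical theorem on Chebyshev-type quadrature says that exactness to degree \(d\) for the uniform measure needs \(\Omega(d^2)\) nodes. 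I would reprove the quantitative form in the one-sided version I actually obtain, using a positive polynomial of degree \(O(n)\) that is concentrated near an endpoint (a squared Fejér-type kernel). Such a polynomial has integral of order \(1/n^2\) but value \(\ge\) const at a node near \(0\), and some node must lie within \(O(1/n^2)\) of the endpoint. This gives \(1/K\lesssim 1/n^2\). The constant \(1/200\) should come out of tracking this kernel. \textbf{This reduction is the main obstacle}: the coordinates \(x_i\) vary independently, so the symmetric functions do not collapse to a single variable. To make the inequality hold at a node, I will first try the extremal monomial \(\prod_i(1-x_i)\) together with AM--GM.

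\textbf{Upper bound.} I construct the family directly by the insertion step \eqref{eq:interlace}, run in reverse order of roles. Take an equal-composition \((n-1)\)-fair string \(s\) from Theorem~\ref{thm:main}, and insert letter \(n\) with run lengths \(k_i\) summing to \(K\). By Lemma~\ref{lem:insertion} it suffices to have a positive rational rule \(w_i=k_i/K\) on a grid \(\{i/N\}\) that is exact for degree \(n-1\). With denominator \(K\), the weights must be integer multiples of \(1/K\), which is the Chebyshev-type problem with repetitions allowed. Here I invoke the Gilboa--Peled theorem: for every \(K\ge Cn^2\) there are \(K\) real nodes in \([0,1]\) with equal weights that are exact to degree \(n-1\). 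The nodes are real, not grid points. As the related-work discussion indicates, I repair this by rounding each node to a rational \(p_t/N\) and absorbing the error into the other dice. The other dice are built as an \((n-1)\)-fair string whose copies are separated by the new runs; instead of \(N\) identical copies of \(s\), I use a concatenation of blocks whose letter-counts are distorted so that the prefix fractions \(a_i/c_i\) equal the actual real nodes, rationally approximated exactly. Concretely, I perturb the exact rule by a small positive Hahn correction as in Proposition~\ref{prop:quadrature}, use positivity margins to keep everything nonnegative, and finally equalize the other \(n-1\) multiplicities by a blow-up (Corollary~\ref{cor:equalize}) that does not touch letter \(n\).
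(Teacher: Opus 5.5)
The proposal has a genuine gap in each half.

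\textbf{Lower bound.} Your reduction to the points $x^{(t)}$, $x_i=a_i/c_i$, with $\frac1K\sum_tF(x^{(t)})=\int_0^1F(u,\ldots,u)\,du$ for every multi-affine $F$, is correct. It is exactly the paper's starting point, with $m=n-1$. But the step you call the main obstacle is left open, and the tools you propose cannot close it.

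\begin{enumerate}[label=(\roman*)]
  \item Maclaurin and AM--GM only give $\frac1K\sum_ty_t^j\ge\frac1{j+1}$ and $\frac1K\sum_t(1-y_t)^j\ge\frac1{j+1}$. The two-point configuration $y\in\{0,1\}$ already satisfies these, so no bound on $K$ can follow from them.
  \item A Fej\'er-type square has no evident multi-affine lift that is nonnegative at off-diagonal rows: for instance, $(x_j-\xi)(x_k-\xi)$ can be negative.
\end{enumerate}

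The missing idea is that the coordinates do \emph{not} vary independently. Because the faces of the prescribed die are sorted, $x^{(t)}$ is coordinatewise nondecreasing in $t$. Along such a chain, for two coordinates $j,k$ and a node $\xi$, one can choose thresholds $\{\alpha,\beta\}=\{\xi,b\}$ with $b\le\xi$ so that $(x_j-\alpha)(x_k-\beta)\ge0$ on every row (Lemma~\ref{individual-paired-thresholds}). Products of such factors over disjoint pairs are multi-affine and nonnegative on all rows. On the diagonal they restrict to $\prod_\ell(u-\xi_\ell)(u-b_\ell)$, which serves as a surrogate for a squared node polynomial. Testing against Gaussian nodes shows that every coordinate of the last row is at least $1-\pi^2/(m+1)^2$. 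Testing against right Radau nodes then gives $1/K=w_K\le2/(e+1)^2\le200/m^2$ (Lemma~\ref{individual-last-row}).

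\textbf{Upper bound.} Your construction is not a working mechanism. Lemma~\ref{lem:insertion} reduces fairness to the prefix fractions only because every prefix and suffix is a power of one equal-composition fair string, so its partial-pattern counts depend only on the letter set. This leaves you two options, and neither works.

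\begin{enumerate}[label=(\roman*)]
  \item If the blocks have proportional compositions, the prefixes stay fair, but every $x^{(t)}$ lies on the diagonal at a rational point. You would then need an equal-weight rule with $K=O(n^2)$ rational nodes, which is the arithmetic problem the paper deliberately avoids.
  \item If the compositions are non-proportional, $x^{(t)}$ moves off the diagonal, but the prefixes become unfair. Already for two letters, concatenating two fair words gives $C_{12}-C_{21}=c^{(1)}_1c^{(2)}_2-c^{(1)}_2c^{(2)}_1$, where $c^{(g)}_i$ counts letter $i$ in word $g$. Order probabilities are then no longer functions of the $x^{(t)}$.
\end{enumerate}

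Either way, prefix fractions are rational while Gilboa--Peled nodes need not be, so the rounding error needs an exact compensation. The Hahn correction cannot provide it: it reweights the runs of the new letter with denominators $2^{O(n\log n)}$, whereas that die must keep exactly $K$ faces of weight $1/K$.

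The paper instead keeps the rounded nodes $t_q$ for the prescribed die and alters the \emph{densities of the other dice}. It adds local piecewise-constant bumps near the nodes, with disjoint index sets for different dice, vanishing moments through degree $m-1$, and $\int_0^t hg\,dx=g(t)=-\int_t^1hg\,dx$. These properties kill every multi-bump term and every one-bump term except those of the immediate neighbours of the prescribed die. Hence each order probability is exactly $\frac1K\sum_qp_k(t_q)+B(p_k')$ (Lemma~\ref{individual-order-response}). Vandermonde systems make every die induce the same $B$, with $B(p')=\int_0^1p-\frac1K\sum_qp(t_q)$. The resulting rational piecewise-constant densities are realized by concatenating, interval by interval, blown-up copies of one fair word on the other dice, with a distinguished face inserted at each $t_q$. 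This also makes the other dice equal-sided without Corollary~\ref{cor:equalize}.
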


Thus, if \(g(n)\) is the smallest possible number of faces of one
specified die in a permutation-fair family of \(n\) dice, then
\[
  g(n)=\Theta(n^2).
\]
The construction below gives no quantitative bound for the other dice,
and hence no improvement in the minimum total size.

\subsubsection{A universal quadratic lower bound}

Fix a die \(D_0\) with sorted faces \(x_1<\cdots<x_K\), and put
\(m=n-1\).  For each other die \(D_i\), define
\[
  p_i(q)=\Pr(D_i<x_q),\qquad 1\leq i\leq m,\quad 1\leq q\leq K.
\]
Each \(p_i(q)\) is nondecreasing in \(q\).  Independence, followed by
permutation fairness restricted to \(D_0\) and any subset of the other
dice, gives
\begin{equation}\label{individual-comparison-moments}
  \frac1K\sum_{q=1}^K\prod_{i\in S}p_i(q)=\frac1{|S|+1}
  \qquad(S\subseteq[m]).
\end{equation}
Indeed, the left side is the probability that \(D_0\) beats every die
indexed by \(S\).  We will use these identities with general positive
weights \(w_q\) summing to one in place of \(1/K\).  By linearity,
their weighted version implies
\begin{equation}\label{individual-diagonal-identity}
  \sum_{q=1}^K w_q F(p_1(q),\ldots,p_m(q))
  =\int_0^1 F(t,\ldots,t)\,dt
\end{equation}
for every multi-affine polynomial \(F\), meaning one whose degree in
each variable is at most one.  We write \(\mathbb E_w\) for the weighted
sum on the left.

\begin{lemma}[Paired thresholds]\label{individual-paired-thresholds}
Suppose the vectors \(p(q)\in[0,1]^m\) are coordinatewise
nondecreasing in \(q\).  For distinct indices \(j,k\) and any
\(t\in(0,1)\), there are thresholds \(\alpha,\beta\) such that
\[
  \{\alpha,\beta\}=\{t,b\},\qquad 0\leq b\leq t,
  \qquad
  (p_j(q)-\alpha)(p_k(q)-\beta)\geq0
  \quad\text{for every }q.
\]
\end{lemma}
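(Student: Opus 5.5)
The idea is to cut the index range at the first place where one of the two sequences reaches \(t\). The sequence that reaches \(t\) first is thresholded at \(t\). The other sequence is thresholded at its own value just before that cut, and this value is automatically at most \(t\). Monotonicity then makes both factors change sign at the same index.

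\textbf{Step 1: the crossing indices.} Let \(q_j=\min\{q: p_j(q)\geq t\}\), with \(q_j=K+1\) if no such \(q\) exists, and define \(q_k\) in the same way. Because both sequences are nondecreasing in \(q\),
\[
  p_j(q)\geq t \iff q\geq q_j,
  \qquad
  p_k(q)\geq t \iff q\geq q_k.
\]
The statement is symmetric under exchanging \(j\) and \(k\), provided \((\alpha,\beta)\) is exchanged at the same time. So we may assume \(q_j\leq q_k\), that is, \(p_j\) crosses \(t\) no later than \(p_k\).

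\textbf{Step 2: choice of thresholds.} Set \(\alpha=t\). If \(q_j=1\), set \(\beta=b=0\). Otherwise set \(\beta=b=p_k(q_j-1)\). In both cases \(b\geq0\). In the second case \(q_j-1<q_j\leq q_k\), so \(p_k(q_j-1)<t\); hence \(0\leq b\leq t\). This also covers \(q_j=K+1\), where \(b=p_k(K)<t\). By construction \(\{\alpha,\beta\}=\{t,b\}\).

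\textbf{Step 3: sign check.} Take \(q<q_j\); this case is empty when \(q_j=1\). Then \(p_j(q)-t<0\). By monotonicity \(p_k(q)\leq p_k(q_j-1)=b\), so \(p_k(q)-b\leq0\), and the product is nonnegative. Now take \(q\geq q_j\). Then \(p_j(q)-t\geq0\). Also \(p_k(q)-b\geq0\): when \(b=0\) this is immediate, and otherwise \(p_k(q)\geq p_k(q_j-1)=b\) by monotonicity. Again the product is nonnegative.

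No step is a real obstacle. The only delicate point is the bookkeeping at the boundary cases \(q_j=1\) and \(q_j=K+1\), where \(b\) must still satisfy \(0\leq b\leq t\). Choosing \(b=0\) in the first case, and relying on the strict inequality \(p_k(q_j-1)<t\) that comes from \(q_j\leq q_k\) in the second, handles both.
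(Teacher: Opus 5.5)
Your proof is correct and uses the same idea as the paper: cut at the first place where either coordinate reaches \(t\), threshold that coordinate at \(t\), and threshold the other coordinate at a value lying between its values on the two rows adjacent to the cut. The paper builds a continuous path by interpolating the rows and adjoining the all-zero and all-one vectors, and takes the other coordinate's interpolated value at the crossing point; you work with the discrete rows directly and take the preceding row's value (or \(0\) if there is none), with explicit handling of the boundary cases.
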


\begin{proof}
Adjoin the all-zero and all-one vectors and interpolate successive
vectors linearly to form a continuous coordinatewise nondecreasing path.
Stop at the first point where either coordinate \(j\) or coordinate
\(k\) reaches \(t\), and let \(\alpha,\beta\) be their values there.
One equals \(t\), and the other is at most \(t\).  Every original row
before this point lies below both thresholds, and every row after it lies
above both.  Thus the two differences have the same weak sign.  The
interpolating points are used only to choose thresholds; they carry no
weight in~\eqref{individual-diagonal-identity}.
\end{proof}

We need the following elementary facts about quadrature for the uniform
measure on \([0,1]\).  Their proofs are included to specify the endpoint
estimates used below.

\begin{lemma}[Gaussian and Radau rules]\label{individual-quadrature-facts}
The \(D\)-node Gaussian rule has positive weights, is exact through
degree \(2D-1\), and has largest node \(\xi_D\) satisfying
\begin{equation}\label{individual-gaussian-edge}
  1-\xi_D\leq\frac{\pi^2}{4(D+1)^2}.
\end{equation}
The right Radau rule with interior nodes \(t_1<\cdots<t_e\) and endpoint
\(1\) has positive weights and is exact through degree \(2e\).  Its
endpoint weight and interior nodes satisfy
\begin{equation}\label{individual-radau-identities}
  \omega_{\mathrm{end}}=\frac1{(e+1)^2},
  \qquad
  \sum_{\ell=1}^e\frac1{1-t_\ell}=\frac{e(e+2)}2.
\end{equation}
\end{lemma}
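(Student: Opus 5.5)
We treat the Gaussian and Radau parts separately, working throughout with Legendre polynomials on \([0,1]\), i.e.\ orthogonal polynomials for Lebesgue measure.

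\textbf{Gaussian rule.} The nodes are the zeros of the shifted Legendre polynomial \(\tilde P_D(x)=P_D(2x-1)\). Positivity of the weights follows by the standard argument: apply exactness to \(\ell_j(x)^2\), where \(\ell_j\) is the Lagrange basis polynomial of degree \(D-1\). Exactness through degree \(2D-1\) follows by dividing any \(p\) of that degree by \(\tilde P_D\) and using orthogonality of the quotient, which has degree below \(D\).

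For the edge bound we need a lower bound on the largest zero. Write \(\cos\theta_D\) for the largest zero of \(P_D\) on \([-1,1]\). Then
\[
1-\xi_D=\frac{1-\cos\theta_D}{2}=\sin^2\!\frac{\theta_D}{2}\le\frac{\theta_D^2}{4},
\]
so it suffices to show \(\theta_D\le\pi/(D+1)\). This is the classical Bruns / Szegő interlacing estimate: the \(\nu\)-th zero satisfies \(\theta_\nu\in\bigl((\nu-\tfrac12)\pi/(D+\tfrac12),\,\nu\pi/(D+1)\bigr)\). One proof applies Sturm comparison to \(u(\theta)=\sqrt{\sin\theta}\,P_D(\cos\theta)\), which satisfies
\[
u''+\Bigl[(D+\tfrac12)^2+\tfrac1{4\sin^2\theta}\Bigr]u=0 .
\]
I would cite Szegő's Theorem~6.21.2 rather than reprove it, since it is only a classical input. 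With it, \(1-\xi_D\le \pi^2/(4(D+1)^2)\), which is \eqref{individual-gaussian-edge}.

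\textbf{Radau rule.} The interior nodes are the zeros of the degree-\(e\) orthogonal polynomial \(\rho_e\) for the weight \((1-x)\,dx\). Up to scaling, \(\rho_e(x)=P^{(1,0)}_e(2x-1)\). Exactness through degree \(2e\) comes from writing
\[
p=(1-x)\rho_e\,q+r,
\]
where \(\deg q\le e-1\) and \(r\) interpolates \(p\) at the \(e+1\) nodes. Positivity of the interior weights comes from testing exactness on \((1-x)\ell_j(x)^2\), which has degree \(2e\) and vanishes at \(1\). The endpoint weight comes from testing on \(\rho_e(x)^2\):
\[
\omega_{\mathrm{end}}\,\rho_e(1)^2=\int_0^1\rho_e^2\,dx .
\]
For this I would use Jacobi data. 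We have \(P^{(1,0)}_e(1)=e+1\). The integral \(\int_0^1\rho_e^2\) can be computed from the Christoffel–Darboux kernel, or more simply via the identity
\[
\frac{\tilde P_e(x)+\tilde P_{e+1}(x)}{2x}\propto \rho_e(x)
\]
under the reflection \(x\mapsto 1-x\); this identity is equivalent to the standard relation \((1-x)P^{(1,0)}_e\propto P_e-P_{e+1}\). With \(\rho_e(x)=(\tilde P_e-\tilde P_{e+1})/(2(1-x))\), it is cleaner to test exactness on
\[
f(x)=(1-x)\rho_e(x)^2\cdot\frac{1}{1-x}.
\]
Alternatively, and most simply, use the known closed form \(\omega_{\mathrm{end}}=1/(e+1)^2\) for Radau on \([-1,1]\), which is \(2/(e+1)^2\) there, and rescale; I would present the computation via
\[
\int_0^1\frac{(\tilde P_e-\tilde P_{e+1})^2}{4(1-x)^2}\,dx ,
\]
expanding with Legendre orthogonality.

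For the sum identity, note that
\[
\sum_\ell\frac1{1-t_\ell}=\frac{\rho_e'(1)}{\rho_e(1)} .
\]
In the \([-1,1]\) variable \(y=2x-1\) this becomes \(2\,\frac{d}{dy}\log P^{(1,0)}_e(y)\big|_{y=1}\). Using the standard value \(P^{(\alpha,\beta)\prime}_e(1)/P^{(\alpha,\beta)}_e(1)=e(e+\alpha+\beta+1)/(2(\alpha+1))\), obtained from the hypergeometric representation \({}_2F_1(-e,e+\alpha+\beta+1;\alpha+1;(1-y)/2)\), we get
\[
2\cdot\frac{e(e+2)}{4}=\frac{e(e+2)}{2},
\]
as required.

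\textbf{Main obstacle.} The main obstacle is the Gaussian edge bound, because it needs a genuine zero estimate rather than an algebraic identity. I would handle it by citing the Bruns–Szegő inequality \(\theta_1\le\pi/(D+1)\), with the Sturm comparison as the fallback proof.
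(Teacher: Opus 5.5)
Your proposal is correct if the classical facts it cites are accepted. Its skeleton matches the paper's: positivity via squared Lagrange polynomials, exactness by division, $\omega_{\mathrm{end}}\rho_e(1)^2=\int_0^1\rho_e^2$, and $\sum_\ell(1-t_\ell)^{-1}=\rho_e'(1)/\rho_e(1)$. In fact your $\rho_e=P^{(1,0)}_e(2x-1)$ is exactly the paper's $J_e/(e+1)$. The real difference is that you cite the two quantitative facts, while the paper proves them in a few lines. The self-contained fallbacks you sketch would not work as written.

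\textbf{Gaussian edge bound.} Sturm comparison for $u''+[(D+\tfrac12)^2+\tfrac1{4\sin^2\theta}]u=0$ naturally yields only Bruns's bound $\theta_1<\pi/(D+\tfrac12)$. That gives $1-\xi_D\le\pi^2/(4(D+\tfrac12)^2)$, which is weaker than \eqref{individual-gaussian-edge}. The constant $D+1$ comes from comparison with the zeros $\cos(\nu\pi/(D+1))$ of $U_D$. The paper makes that comparison by linear algebra. The Legendre Jacobi matrix has off-diagonal entries $k/\sqrt{4k^2-1}\ge\tfrac12$. So its Rayleigh quotient at the positive vector $(\sin(k\pi/(D+1)))_k$, the top eigenvector of the constant-$\tfrac12$ matrix, is at least $\cos(\pi/(D+1))$.

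Also, the interval you quote for $\theta_\nu$ is valid only for $\nu\le D/2$, by the symmetry $\theta_{D+1-\nu}=\pi-\theta_\nu$, and its upper end is attained at $D=1$. Since you need only the non-strict case $\nu=1$, this does not affect your argument.

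\textbf{Radau endpoint weight.} Expanding $(\tilde P_e-\tilde P_{e+1})^2$ by Legendre orthogonality does not handle the factor $(1-x)^{-2}$. The step you need is the Christoffel--Darboux identity at $y=1$:
\[
(e+1)\rho_e=\sum_{r=0}^e(2r+1)\tilde P_r .
\]
This gives $\int_0^1\rho_e^2=(e+1)^{-2}\sum_{r=0}^e(2r+1)=1$, hence $\omega_{\mathrm{end}}=(e+1)^{-2}$, without citing the closed form.

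The paper takes this sum as the definition of $J_e$. Its reproducing property $\int_0^1J_ef=f(1)$ then gives, from Legendre facts alone:
\begin{itemize}
\item the orthogonality for the weight $1-t$;
\item the endpoint weight, via $\int_0^1J_e^2=J_e(1)$;
\item the logarithmic derivative, via $P_r'(1)=r(r+1)/2$.
\end{itemize}
Your ${}_2F_1$ evaluation $P^{(1,0)\prime}_e(1)/P^{(1,0)}_e(1)=e(e+2)/4$ is correct and slightly quicker, at the price of importing Jacobi-polynomial normalizations.

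A minor slip: with $\ell_j$ the Lagrange basis on the $e$ interior nodes, $(1-x)\ell_j^2$ has degree $2e-1$, not $2e$. This is still within exactness.
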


\begin{proof}
Let \(P_r\) denote the Legendre polynomial normalized by \(P_r(1)=1\).
The Gaussian nodes are the shifted roots of \(P_D\).  Polynomial
division by the node polynomial and orthogonality give exactness through
degree \(2D-1\); applying the rule to squared cardinal polynomials gives
positive weights.  The unshifted nodes are the eigenvalues of the
symmetric \(D\)-by-\(D\) Jacobi matrix with zero diagonal and
off-diagonal entries
\[
  \frac{k}{\sqrt{(2k-1)(2k+1)}}\geq\frac12,
  \qquad 1\leq k<D.
\]
The matrix with all these entries replaced by \(1/2\) has largest
eigenvalue \(\cos(\pi/(D+1))\), with a positive sine eigenvector.
Using that vector in the Rayleigh quotient shows that the largest
unshifted Gaussian node is at least \(\cos(\pi/(D+1))\).  Rescaling to
\([0,1]\) and using \(1-\cos x\leq x^2/2\) proves
\eqref{individual-gaussian-edge}.

For the Radau rule, put
\[
  J_e(t)=\sum_{r=0}^e(2r+1)P_r(2t-1).
\]
Legendre orthogonality gives the reproducing identity
\(\int_0^1J_e(t)f(t)\,dt=f(1)\) for \(\deg f\leq e\).
Consequently \(J_e\) is orthogonal to all polynomials of degree at most
\(e-1\) for the weight \(1-t\), and has \(e\) distinct roots in
\((0,1)\).  Interpolation at these roots and at \(1\) gives a rule
exact through degree \(2e\): division by \((t-1)J_e(t)\) leaves a
remainder of degree at most \(e\), and the quotient has degree at most
\(e-1\).  The integral of the quotient term vanishes by orthogonality.
Applying the rule to
\((1-t)(J_e(t)/(t-t_\ell))^2\) shows that every interior weight is
positive.  At the endpoint, exactness on \(J_e^2\) gives
\[
  \omega_{\mathrm{end}}J_e(1)^2
  =\int_0^1J_e(t)^2\,dt=J_e(1)=(e+1)^2.
\]
Finally, the Legendre endpoint derivative identity gives
\[
  \frac{J_e'(1)}{J_e(1)}
  =\frac{\sum_{r=0}^e(2r+1)r(r+1)}{(e+1)^2}
  =\frac{e(e+2)}2.
\]
The logarithmic derivative of \(J_e\) at \(1\) is
\(\sum_{\ell=1}^e(1-t_\ell)^{-1}\), proving the remaining identity.
\end{proof}

\begin{lemma}[The last row]\label{individual-last-row}
Suppose \(w_q>0\), \(\sum_qw_q=1\), the vectors \(p(q)\in[0,1]^m\)
are coordinatewise nondecreasing, and
\eqref{individual-diagonal-identity} holds.  Write \(a_i=p_i(K)\).
Then
\[
  a_i\geq1-\frac{\pi^2}{(m+1)^2}\quad(1\leq i\leq m),
  \qquad w_K\leq\frac{200}{m^2}.
\]
\end{lemma}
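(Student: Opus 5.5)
The plan is to feed~\eqref{individual-diagonal-identity} with multi-affine test functions $F$ built from products of paired factors $(p_j-\alpha)(p_k-\beta)$ supplied by Lemma~\ref{individual-paired-thresholds}. Each such pair factor is pointwise nonnegative in $q$, while its diagonal version $(t-\alpha)(t-\beta)$ vanishes at a prescribed quadrature node $t$. Using distinct coordinates for distinct factors keeps $F$ multi-affine. The right-hand side $\int_0^1F(t,\ldots,t)\,dt$ can then be evaluated exactly by the Gaussian or Radau rule of Lemma~\ref{individual-quadrature-facts}, because almost all nodal values vanish.

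\emph{Bound on $a_i$.} Let $D=\lfloor (m+1)/2\rfloor$, so that $2(D-1)+1\leq m$ and $D+1\geq (m+1)/2$. Fix $i$, and choose $2(D-1)$ other coordinates, grouped into $D-1$ pairs. For the $l$-th pair, $l<D$, apply Lemma~\ref{individual-paired-thresholds} with $t=\xi_l$, the Gaussian nodes, to get thresholds $\{\alpha_l,\beta_l\}=\{\xi_l,b_l\}$ with $b_l\leq\xi_l$. Set
\[
  F=(p_i-a_i)\prod_{l<D}(p_{j_l}-\alpha_l)(p_{k_l}-\beta_l).
\]
Since $p_i(q)\leq p_i(K)=a_i$ for every $q$, we have $F\leq0$ at every row, so $\mathbb E_wF\leq0$. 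The diagonal polynomial has degree $2D-1$, so the $D$-node Gaussian rule integrates it exactly. All nodes $\xi_l$ with $l<D$ are roots, leaving
\[
  \int_0^1F(t,\ldots,t)\,dt=\omega_D(\xi_D-a_i)\prod_{l<D}(\xi_D-\xi_l)(\xi_D-b_l).
\]
Here $\omega_D>0$, and every factor in the product is positive because $b_l\leq\xi_l<\xi_D$. If $a_i<\xi_D$, the right side would be strictly positive, contradicting $\mathbb E_wF\leq0$. Hence $a_i\geq\xi_D$, and~\eqref{individual-gaussian-edge} gives $a_i\geq1-\pi^2/(4(D+1)^2)\geq1-\pi^2/(m+1)^2$.

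\emph{Bound on $w_K$.} Take the right Radau rule with $e$ interior nodes $t_1<\cdots<t_e$, where $2e\leq m$. Pair $2e$ coordinates, and use Lemma~\ref{individual-paired-thresholds} with $t=t_l$ to get thresholds $\{t_l,b_l\}$, $b_l\leq t_l$. Let $F=\prod_l(p_{j_l}-\alpha_l)(p_{k_l}-\beta_l)$, which has degree $2e$ and is nonnegative at every row. Radau exactness kills all interior nodes, so
\[
  \int_0^1F(t,\ldots,t)\,dt=\frac{1}{(e+1)^2}\prod_l(1-t_l)(1-b_l).
\]
Dropping all rows except $q=K$ gives
\[
  \mathbb E_wF\geq w_K\prod_l(a-t_l)(a-b_l),
\]
where $a=\min_ia_i$ and all factors are positive by the first part, provided $a>t_e$; this must be checked. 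Since $b_l\leq t_l$, we have $(a-b_l)/(1-b_l)\geq(a-t_l)/(1-t_l)$. Therefore
\[
  w_K\leq\frac{1}{(e+1)^2}\prod_{l=1}^e\Bigl(1-\frac{1-a}{1-t_l}\Bigr)^{-2}.
\]

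The main obstacle is controlling this product. Put $x_l=(1-a)/(1-t_l)$. By~\eqref{individual-radau-identities} and the first part,
\[
  \sum_lx_l\leq\frac{\pi^2}{(m+1)^2}\cdot\frac{e(e+2)}{2}.
\]
With the naive choice $e\approx m/2$, this is about $\pi^2/8>1$, which is too large to control the product. Instead I would take $e\approx m/4$. Then $\sum_lx_l\leq\pi^2/32<1/3$, which also ensures every $x_l<1$, that is, $a>t_e$. The elementary inequality $\prod(1-x_l)\geq1-\sum x_l$ then bounds the product by $(2/3)^{-2}$. Since $(e+1)^2\gtrsim m^2/16$, this yields $w_K\leq c/m^2$ with $c$ roughly $36$. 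The remaining work is to handle small $m$ and the floors, checking that the constant stays below $200$; for tiny $m$ the claim $w_K\leq 200/m^2$ is trivial since $w_K\leq 1$.
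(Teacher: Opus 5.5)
Your proposal is correct and follows the paper's proof essentially step for step. The Gaussian test function $(a_i-p_i)\prod(\text{paired factors})$ with the top node $\xi_D$ omitted gives $a_i\ge\xi_D$, and the Radau test function together with $\sum_\ell(1-t_\ell)^{-1}=e(e+2)/2$ bounds $w_K$; the only difference is that the paper takes $e=\lfloor m/10\rfloor$, which makes $\sum_\ell x_\ell<1/4$ immediate, whereas your $e=\lfloor m/4\rfloor$ also closes for every $m\ge4$, since $m\ge4e$ gives $\sum_\ell x_\ell\le\frac{\pi^2e(e+2)}{2(4e+1)^2}\le\frac{3\pi^2}{50}<0.6$ and $e+1\ge(m+1)/4$, hence $w_K\le 100/(m+1)^2\le200/m^2$ (with $w_K\le1$ covering $m\le3$).
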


\begin{proof}
Fix \(i\), put \(d=\lfloor(m-1)/2\rfloor\), and choose \(d\)
disjoint pairs of coordinates other than \(i\).  Let
\(\xi_1<\cdots<\xi_D\) be the nodes of the Gaussian rule with
\(D=d+1\) nodes.  For pair \(\ell\), apply
Lemma~\ref{individual-paired-thresholds} at \(\xi_\ell\), omitting
the largest node \(\xi_D\).  Let \(T\) be the product of the resulting
paired factors.  It is multi-affine and nonnegative on every row, and
\[
  P(t):=T(t,\ldots,t)
  =\prod_{\ell=1}^d(t-\xi_\ell)(t-b_\ell),
  \qquad b_\ell\leq\xi_\ell<\xi_D.
\]
Since \(a_i-p_i(q)\geq0\) and coordinate \(i\) is absent from \(T\),
\eqref{individual-diagonal-identity} and Gaussian exactness yield
\[
  0\leq\mathbb E_w[(a_i-p_i)T(p)]
  =\int_0^1(a_i-t)P(t)\,dt
  =\omega_D(a_i-\xi_D)P(\xi_D).
\]
Here \(\omega_D>0\) and \(P(\xi_D)>0\), also when \(d=0\).
Therefore \(a_i\geq\xi_D\).  Since \(D+1\geq(m+1)/2\), we obtain
\[
  1-a_i\leq\delta:=\frac{\pi^2}{(m+1)^2}
  \qquad\text{for every }i.
\]

For \(m\geq10\), put \(e=\lfloor m/10\rfloor\) and choose \(e\)
disjoint pairs of coordinates.  Apply
Lemma~\ref{individual-paired-thresholds} at the interior nodes of the
right Radau rule, one node for each pair.  Again let \(T\) be the
product of the paired factors, and write
\[
  P(t)=T(t,\ldots,t)
  =\prod_{\ell=1}^e(t-t_\ell)(t-b_\ell),\qquad b_\ell\leq t_\ell.
\]
Its degree is \(2e\leq m\).  Since \(P\) vanishes at the interior
Radau nodes, exactness gives
\begin{equation}\label{individual-radau-test}
  \mathbb E_w T(p)=\frac{P(1)}{(e+1)^2}.
\end{equation}
Put \(S=\sum_\ell(1-t_\ell)^{-1}=e(e+2)/2\).  The inequalities
\(e\leq m/10\) and \(e+2\leq3m/10\) imply
\[
  \delta S\leq\frac{3\pi^2}{200}<\frac14.
\]
In particular \(\delta<1-t_\ell\) for every \(\ell\), so all paired
thresholds lie strictly below all \(a_i\).  Each paired factor at the
last row, divided by its value at the all-one vector, is at least
\((1-\delta/(1-t_\ell))^2\).  Using
\(\prod_j(1-z_j)\geq1-\sum_jz_j\) for \(0\leq z_j\leq1\), we get
\[
  \frac{T(a)}{P(1)}
  \geq\prod_{\ell=1}^e
       \left(1-\frac{\delta}{1-t_\ell}\right)^2
  \geq1-2\delta S>\frac12.
\]
All rows contribute nonnegatively to
\eqref{individual-radau-test}.  Hence
\(w_K\leq2/(e+1)^2\leq200/m^2\).  For \(m<10\), the same bound
follows directly from \(w_K\leq1\).
\end{proof}

\begin{proof}[Proof of the lower bound in Theorem~\ref{thm:individual-size}]
Apply Lemma~\ref{individual-last-row} to the comparison probabilities
of any chosen die, with \(w_q=1/K\) and \(m=n-1\).
Equation~\eqref{individual-comparison-moments} supplies the required
identities, and the conclusion is \(K\geq(n-1)^2/200\).
\end{proof}

\subsubsection{A construction with a prescribed number of faces}

The upper bound starts with a real equal-weight quadrature with \(K\)
nodes.  We approximate the nodes by rationals and correct the resulting
order probabilities by changing the distributions of the other dice.
The corrections have disjoint supports and vanishing moments, so their
effect on every order probability is exactly linear.  Finally, we replace
the corrected continuous distributions by finite uniform dice without
changing the number \(K\) of distinguished faces.

\begin{lemma}[A regular starting quadrature]\label{individual-real-rule}
There is an absolute constant \(C_0\) such that, for every \(m\geq1\)
and every integer \(K\geq C_0(m+1)^2\), there are distinct nodes
\(0<u_1<\cdots<u_K<1\) satisfying
\[
  \frac1K\sum_{q=1}^K p(u_q)=\int_0^1p(t)\,dt
  \qquad(\deg p\leq m).
\]
We may take \(C_0\) large enough that \(K\geq m^2\).
\end{lemma}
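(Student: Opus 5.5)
This follows directly from the Gilboa--Peled equal-weight quadrature theorem, which the introduction lists among the classical inputs. Bernstein's classical bound concerns Chebyshev-type quadrature for the uniform measure on an interval: an equal-weight rule exact through degree \(m\) needs only \(O(m^2)\) nodes. The sharp form of Gilboa and Peled says more. There is an absolute constant \(c_1\) such that, for every \(K\geq c_1 m^2\), not merely for one particular \(K\), there exist \(K\) real nodes in \([-1,1]\) with equal weights \(1/K\) integrating every polynomial of degree at most \(m\) exactly against the normalized uniform measure. I would transport such a rule to \([0,1]\) by the affine map \(x\mapsto(x+1)/2\). This map preserves degrees and carries the normalized uniform measure to Lebesgue measure on \([0,1]\). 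It therefore gives nodes \(u_1\leq\cdots\leq u_K\) in \([0,1]\) satisfying the displayed identity. Finally I set \(C_0=\max(c_1,1)\) times a suitable factor, so that \(C_0(m+1)^2\geq c_1m^2\) and also \(K\geq m^2\).

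Two regularity properties remain: the nodes are distinct, and they lie strictly inside \((0,1)\). For \emph{interiority}, suppose some node equals \(1\). Test the rule on \(p(t)=t^{m}\); the argument is uniform if I instead use a polynomial that is nonnegative and small except near the endpoint. Then the node at \(1\) alone contributes \(1/K\), while the integral can be made smaller than \(1/K\) once the degree is large compared with \(\sqrt K\). That fails in our regime, so this bound alone is not sufficient. The cleaner route is endpoint analysis, as in Lemma~\ref{individual-last-row}. Apply the rule to \((1-t)\) times the squared Radau node polynomial, or to \(t\cdot(\text{Gauss node polynomial})^2\). Both are nonnegative, vanish at the endpoint, and have integral comparable to the endpoint Radau weight \(1/(e+1)^2\). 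I expect this to force only a bounded number of nodes at an endpoint, not zero.

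Hence the practical fix is perturbative. I would apply Gilboa--Peled with degree \(m+2\) or so, in a larger admissible range, and then use the open-ness of the solution set. The moment map \((u_1,\ldots,u_K)\mapsto\bigl(\frac1K\sum_q u_q^j\bigr)_{j\leq m}\) has a Jacobian whose rows are \(\bigl(ju_q^{j-1}/K\bigr)_q\). This Jacobian has full rank \(m\) as soon as at least \(m\) nodes are distinct, by the Vandermonde determinant. By the implicit function theorem, the set of exact node vectors near a given solution is therefore a smooth manifold of dimension \(K-m\geq1\). On this manifold I can move any coinciding or endpoint nodes slightly, keeping the \(m\) pivot nodes as the dependent variables. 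Coincidence loci and endpoint loci are proper algebraic subvarieties of this manifold, so a generic nearby point has all nodes distinct and inside \((0,1)\). Nodes must stay in \([0,1]\) under the perturbation. Nodes already at an endpoint can be moved inward deliberately, and interior nodes stay interior for small perturbations.

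The main obstacle is guaranteeing at least \(m\) distinct nodes, which the full-rank step needs. If the rule had fewer than \(m\) distinct nodes, say \(r<m\), it would be a positive rule with \(r\) nodes exact through degree \(m\). When \(m\geq2r\) this contradicts the fact that \(r\)-node rules are exact at most through degree \(2r-1\), since the squared node polynomial has degree \(2r\). So at least \(\lceil (m+1)/2\rceil\) nodes are distinct, which is not quite \(m\). I would therefore first apply Gilboa--Peled at degree \(2m\). The same argument then yields at least \(m+1\) distinct nodes, and the cost is only a constant factor in \(C_0\).
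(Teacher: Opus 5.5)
Your route is essentially the paper's: Gilboa--Peled at a higher degree, a squared support polynomial to count distinct support points, $m$ pivots with a scaled Vandermonde Jacobian for the first $m$ moments, and a generic perturbation of the remaining nodes.

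The gap is that your pivots need not be interior. The pivots are the dependent coordinates of the implicit function, so ``moving endpoint nodes inward deliberately'' applies only to free nodes. A pivot at $0$ or $1$ moves however the compensation dictates. Staying inside $[0,1]$ is an open sign condition, so saying that endpoint loci are proper subvarieties cannot supply it. For instance, suppose a pivot and some free nodes share the value $0$. Their Jacobian columns coincide, so moving those free nodes inward moves the pivot outward with derivative $-1$. Your count at degree $2m$ gives only $m+1$ distinct support points. If $0$ and $1$ are both among them, at most $m-1$ are interior, so one of your $m$ pivots may be forced onto an endpoint.

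The paper avoids this by applying Gilboa--Peled at degree $2m+2$. The same squared-polynomial argument then yields at least $m+2$ distinct support points, hence at least $m$ interior ones. All pivots are chosen there, so they stay distinct and interior under small perturbations. At degree $2m$ you could instead use the test polynomial $t^{\epsilon_0}(1-t)^{\epsilon_1}\prod_x(t-x)^2$. Here the product runs over interior support points, and $\epsilon_0,\epsilon_1\in\{0,1\}$ indicate which endpoints are support points. This polynomial forces at least $m$ interior support points.

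You should also justify that a free--pivot collision locus is proper. As the paper notes, moving a free node that coincides with a pivot changes that pivot with derivative $-1$. The difference of the two coordinates therefore has nonzero derivative. Free--free collisions are just hyperplanes.
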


\begin{proof}
Theorem~1.1 of Gilboa and Peled~\cite{gilboa2017}, applied to the
constant weight, gives an equal-weight quadrature of degree \(d\) with
exactly \(K\) nodes for every integer \(K\geq C_1d^2\).  In their
notation, the theorem bounds the threshold \(\overline N_w(d)\) for
all sufficiently large cardinalities, as well as the minimum
cardinality \(N_w(d)\).  For the constant weight their local-mass
quantity is of order \(d^2\); affine rescaling gives the assertion on
\([0,1]\).

Apply this result at degree \(2m+2\), with the prescribed \(K\).
The rule has at least \(m+2\) distinct support points: otherwise the
square of its support-vanishing polynomial would have degree at most
\(2m+2\), zero quadrature sum, and positive integral.  At least \(m\)
support points therefore lie in \((0,1)\).  Choose one node at each
of \(m\) distinct interior points as a pivot.  The Jacobian of the first
\(m\) moment sums with respect to these nodes has entries
\(s u_q^{s-1}/K\), \(1\leq s\leq m\), and is an invertible scaled
Vandermonde matrix.  The analytic implicit function theorem allows
arbitrary sufficiently small perturbations of all other nodes, with
compensating changes of the pivots preserving these \(m\) moments.

The pivots remain distinct and interior.  Move each free endpoint node
slightly into \((0,1)\).  Collisions between free nodes form proper
hyperplanes in the free parameters.  If a free node initially coincides
with a pivot, varying that free node alone changes the matching pivot
with derivative \(-1\), because their moment-Jacobian columns are
equal.  The difference of the two coordinates thus has nonzero derivative
at the initial point, so their collision set is a proper analytic zero
set.  Initially distinct coordinates remain distinct in a sufficiently
small neighborhood.  A finite union of these proper zero sets cannot
cover the open set of inward perturbations.  Choose a collision-free
perturbation and sort the nodes.  All \(K\) nodes are now distinct and
interior, and the moments through degree \(m\) are unchanged.
\end{proof}

\begin{lemma}[Local moment corrections]\label{individual-centered-bumps}
For distinct interior nodes \(u_q\), choose pairwise disjoint open
intervals \(I_q\subset(0,1)\) around them.  After shrinking these
intervals if necessary, there are neighborhoods \(V_q\subset I_q\)
of \(u_q\) and bounded piecewise-constant functions \(h_{q,t}\),
\(t\in V_q\), supported in \(I_q\), such that
\begin{equation}\label{individual-bump-moments}
  \int_0^t h_{q,t}(x)g(x)\,dx=g(t),\qquad
  \int_t^1 h_{q,t}(x)g(x)\,dx=-g(t)
  \qquad(\deg g\leq m-1).
\end{equation}
Their breakpoints are rational, and their values are rational when
\(t\) is rational.  Their coefficient vectors depend continuously on
\(t\), and the functions are uniformly bounded for \(t\) near \(u_q\).
In particular, their full moments through degree \(m-1\) vanish.
\end{lemma}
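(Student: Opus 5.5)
I would construct \(h_{q,t}\) explicitly as a combination of indicator functions of small intervals, and reduce the moment conditions to a square linear system. Fix \(q\) and write \(u=u_q\). Shrink \(I_q\) to a closed-ish interval \([\alpha,\beta]\subset I_q\) with rational endpoints \(\alpha<u<\beta\). Choose \(2m\) rational breakpoints
\[
  \alpha<a_0<a_1<\cdots<a_m<u<b_0<b_1<\cdots<b_m<\beta ,
\]
so that \(J_r=(a_{r-1},a_r)\) and \(J'_r=(b_{r-1},b_r)\) for \(1\leq r\leq m\) are \(m\) disjoint subintervals to the left of \(u\) and \(m\) to the right. Take \(V_q\) to be a small open interval around \(u\) contained in \((a_m,b_0)\). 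For \(t\in V_q\), I would seek
\[
  h_{q,t}=\sum_{r=1}^m c_r(t)\mathbf 1_{J_r}+\sum_{r=1}^m c'_r(t)\mathbf 1_{J'_r}.
\]
Because every \(J_r\) lies left of \(t\) and every \(J'_r\) lies right of \(t\), the two conditions in \eqref{individual-bump-moments} decouple. Testing against the monomial basis \(g(x)=x^{s}\), \(0\leq s\leq m-1\), they become
\[
  \sum_r c_r(t)\int_{J_r}x^s\,dx=t^s,\qquad \sum_r c'_r(t)\int_{J'_r}x^s\,dx=-t^s .
\]

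The key step is invertibility of the \(m\times m\) moment matrices \(G_{sr}=\int_{J_r}x^s\,dx\), and likewise \(G'\). I would argue this by contradiction. If \(G\) were singular, some nonzero polynomial \(g\) of degree at most \(m-1\) would have \(\int_{J_r}g=0\) for every \(r\). Then \(g\) would change sign, and hence have a zero, inside each of the \(m\) disjoint intervals \(J_r\). That gives \(m\) distinct zeros, which is impossible for a nonzero polynomial of degree at most \(m-1\). This is the only genuinely nontrivial point, and it is standard: the moment functionals of disjoint intervals form a Chebyshev system.

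With \(G\) and \(G'\) invertible, the coefficients are \(c(t)=G^{-1}(1,t,\ldots,t^{m-1})^{\mathsf T}\) and \(c'(t)=-G'^{-1}(1,t,\ldots,t^{m-1})^{\mathsf T}\). The remaining properties then follow directly.
\begin{itemize}[leftmargin=*]
  \item \emph{Rationality.} \(G\) and \(G'\) have rational entries because the breakpoints are rational, so the coefficients are rational whenever \(t\) is rational.
  \item \emph{Continuity and boundedness.} The coefficients are polynomial in \(t\), hence continuous, and uniformly bounded on \(V_q\).
  \item \emph{Support.} Each \(h_{q,t}\) is supported in \([a_0,b_m]\subset I_q\).
  \item \emph{Vanishing full moments.} Adding the two identities in \eqref{individual-bump-moments} gives \(\int_0^1 h_{q,t}g=0\) for \(\deg g\leq m-1\).
\end{itemize}
Disjointness across different \(q\) is inherited from the disjointness of the \(I_q\).

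The shrinking step is only needed to place \(V_q\) strictly between the left and right blocks, so that the two conditions decouple for every \(t\in V_q\).
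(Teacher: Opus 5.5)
Your proof is correct and follows the paper's construction: $m$ rational-endpoint intervals on each side of $u_q$, $V_q$ placed between the two groups, and the two fixed rational moment systems solved with right-hand sides $\pm(1,t,\ldots,t^{m-1})^{\mathsf T}$. The only difference is the invertibility step: you use a sign-change and zero-counting argument, while the paper uses multilinearity to write the determinant as $\int_{J_1\times\cdots\times J_m}\prod_{a<b}(x_b-x_a)\,dx>0$, and either argument settles it.
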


\begin{proof}
Inside each \(I_q\), choose \(m\) disjoint rational-endpoint intervals
strictly to the left of \(u_q\), and another \(m\) strictly to its
right.  Choose \(V_q\) between these two groups.  For any \(m\)
ordered disjoint intervals \(J_1<\cdots<J_m\) of positive length,
the matrix
\[
  M_{ra}=\int_{J_a}x^r\,dx,
  \qquad 0\leq r\leq m-1,\quad 1\leq a\leq m,
\]
is rational and invertible.  Indeed, multilinearity of the determinant
gives
\[
  \det M=\int_{J_1\times\cdots\times J_m}
         \prod_{a<b}(x_b-x_a)\,dx_1\cdots dx_m>0.
\]
Solve the left interval system with right-hand side
\((1,t,\ldots,t^{m-1})^T\), and the right interval system with its
negative.  Use the solutions as the values of \(h_{q,t}\) on those
intervals, and set it to zero elsewhere.  The equations give
\eqref{individual-bump-moments} on the monomial basis.  The matrices
are fixed, so their solutions have all the stated rationality,
continuity, and boundedness properties.
\end{proof}

The next lemma explains why these local corrections can repair every
full ordering simultaneously.  Write \(\mathcal P_r\) for the real
polynomials of degree at most \(r\).

\begin{lemma}[The response of order probabilities]\label{individual-order-response}
Choose \(t_q\in V_q\), pairwise disjoint sets
\(Q_1,\ldots,Q_m\subseteq[K]\), and coefficients \(\delta_{j,q}\)
such that
\[
  f_j(x)=1+\sum_{q\in Q_j}\delta_{j,q}h_{q,t_q}(x),\qquad j\in[m],
\]
are nonnegative densities.  Let \(X_0,\ldots,X_m\) be independent,
with \(X_0\) uniform on \(t_1,\ldots,t_K\) and \(X_j\) having density
\(f_j\) for \(j\in[m]\).  Define linear functionals on
\(\mathcal P_{m-1}\) by
\[
  B_j(a)=\frac1K\sum_{q\in Q_j}\delta_{j,q}a(t_q).
\]
If all \(B_j\) equal the same functional \(B\), then every specified
full ordering in which \(X_0\) has exactly \(k\) predecessors has
probability
\begin{equation}\label{individual-response-formula}
  \frac1K\sum_{q=1}^K p_k(t_q)+B(p_k'),
  \qquad
  p_k(t)=\frac{t^k(1-t)^{m-k}}{k!(m-k)!}.
\end{equation}
\end{lemma}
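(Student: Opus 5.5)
The plan is to condition on the value of \(X_0\), expand the product of the densities \(f_j\) multilinearly in the corrections \(\delta_{j,q}h_{q,t_q}\), and show that only the zeroth-order term and two specific first-order terms survive. Those survivors will give exactly \(p_k\) and \(B(p_k')\).

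\textbf{Setup.} Fix a full ordering in which \(X_0\) has exactly \(k\) predecessors. Relabel so that the predecessors, in increasing order, are dice \(\sigma_1,\ldots,\sigma_k\), and the successors are \(\tau_1,\ldots,\tau_{m-k}\). By independence, the probability of the ordering is
\[
  \frac1K\sum_{q'=1}^K\int_{x_1<\cdots<x_k<t_{q'}<y_1<\cdots<y_{m-k}}
  \prod_{i}f_{\sigma_i}(x_i)\prod_{i}f_{\tau_i}(y_i)\,dx\,dy .
\]
Fix \(q'\) and write \(t=t_{q'}\). Expanding each \(f_j=1+\sum_{q\in Q_j}\delta_{j,q}h_{q,t_q}\), the integrand becomes a sum of terms. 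Each term is indexed by a set \(P\) of perturbed dice and, for each \(j\in P\), a node \(q_j\in Q_j\). Because the \(Q_j\) are pairwise disjoint, distinct perturbed dice use distinct nodes, and hence their bumps live in pairwise disjoint intervals \(I_{q_j}\).

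\textbf{Vanishing of most terms.} Fix such a term and a perturbed die \(a\in P\) with \(q_a\neq q'\). Fix all other perturbed coordinates and integrate out every unperturbed coordinate.
\begin{itemize}
  \item For \(x_a\in I_{q_a}\), the result is a single polynomial in \(x_a\) of degree at most \(m-1\). It has no breakpoints inside \(I_{q_a}\), since the other perturbed coordinates and \(t=t_{q'}\) lie outside \(I_{q_a}\); here we use \(t_{q'}\in V_{q'}\subset I_{q'}\), which is disjoint from \(I_{q_a}\). The degree bound holds because at most \(m-1\) unperturbed variables are integrated, each over an interval whose endpoints are affine in the remaining variables.
  \item For the same reason, the order constraints restrict \(x_a\) either to all of \(I_{q_a}\) or to nothing.
\end{itemize}
By Lemma~\ref{individual-centered-bumps}, the full moments of \(h_{q_a,t_{q_a}}\) through degree \(m-1\) vanish, so this term is zero. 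Consequently every term with \(|P|\geq2\) vanishes, since at most one die can use the node \(q'\). Every term with \(|P|=1\) vanishes unless the perturbed die \(j\) has \(q'\in Q_j\) and uses the bump at \(q_j=q'\).

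\textbf{Surviving terms.} The zeroth-order term is the Lebesgue volume \(t^k(1-t)^{m-k}/(k!(m-k)!)=p_k(t)\).

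Suppose the perturbed die is the predecessor \(\sigma_i\). Its term is
\[
  \delta_{j,q'}\int_0^t h_{q',t}(x)\,g(x)\,dx,
  \qquad
  g(x)=\frac{x^{i-1}}{(i-1)!}\,\frac{(t-x)^{k-i}}{(k-i)!}\,\frac{(1-t)^{m-k}}{(m-k)!}.
\]
Here \(\deg g\leq m-1\). By the first identity in \eqref{individual-bump-moments}, this equals \(\delta_{j,q'}g(t)\). This is zero unless \(i=k\), in which case it equals
\[
  \delta_{j,q'}\frac{t^{k-1}(1-t)^{m-k}}{(k-1)!(m-k)!}.
\]
Symmetrically, the second identity in \eqref{individual-bump-moments} shows that among the successors only the immediate successor \(\tau_1\) contributes. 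Its contribution is
\[
  -\delta_{j,q'}\frac{t^k(1-t)^{m-k-1}}{k!(m-k-1)!}.
\]
Both polynomials have degree at most \(m-1\), so they lie in \(\mathcal P_{m-1}\).

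\textbf{Summation over \(q'\).} Summing over \(q'\) with weight \(1/K\), the immediate-predecessor contribution is \(B_{\sigma_k}\) applied to the first polynomial, and the immediate-successor contribution is \(B_{\tau_1}\) applied to minus the second. The boundary cases \(k=0\) and \(k=m\) simply drop one of these. Since all \(B_j=B\), the two contributions combine, by linearity, into \(B\) applied to the derivative
\[
  p_k'(t)=\frac{t^{k-1}(1-t)^{m-k}}{(k-1)!(m-k)!}-\frac{t^k(1-t)^{m-k-1}}{k!(m-k-1)!}.
\]
This yields \eqref{individual-response-formula}.

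\textbf{Main obstacle.} The delicate step is the vanishing argument. One must check that, after integrating out the unperturbed variables, the dependence on a bump variable is genuinely one polynomial of degree at most \(m-1\) on the whole bump interval. This is exactly where the pairwise disjointness of the \(I_q\) and of the \(Q_j\), together with \(t_q\in V_q\subset I_q\), is used.
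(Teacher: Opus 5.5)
Your proposal is correct and follows essentially the same route as the paper's proof. Both condition on \(X_0=t_{q'}\), expand the densities multilinearly, and use disjointness of the \(Q_j\) and \(I_q\) with vanishing moments through degree \(m-1\) to kill every term containing a bump not indexed by \(q'\). The one-sided identities then isolate the immediate predecessor and successor, whose averaged contributions combine to \(B(p_k')\).
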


\begin{proof}
Fix the full ordering and condition on \(X_0=t_\ell\).  Expand the
product of the densities of the other variables.  Choosing the constant
term from every density gives probability \(p_k(t_\ell)\).

Consider a term choosing bumps for \(b\geq1\) variables, and expand
their sums over the bump indices.  These indices are distinct because
the sets \(Q_j\) are disjoint.  The chosen supports are therefore
disjoint and ordered.  If their order is inconsistent with the specified
full ordering, the term is zero.  Otherwise, integrate out the variables
whose selected density contribution was the constant \(1\).  The
resulting kernel is a product of powers of gaps between successive
chosen variables, \(t_\ell\), and the endpoints \(0,1\), divided by
factorials.  On the relevant ordered region it is a polynomial of total
degree \(m-b\).

Only the support \(I_\ell\) can contain \(t_\ell\).  If \(b\geq2\),
at least one chosen bump has its whole support on one side of
\(t_\ell\).  Because the support intervals are disjoint and ordered,
varying this coordinate within its support crosses neither another
chosen coordinate nor \(t_\ell\).  Hence the same gap-polynomial formula
holds throughout that support.  For fixed values of the other chosen variables, its
integration is over its entire support against a polynomial of degree
at most \(m-b\leq m-1\).  Its vanishing full moments make the term
zero.  The same argument eliminates a one-bump term unless its index
is \(\ell\).  Thus only one-bump terms cut by \(t_\ell\) can survive.

For such a bump from a variable before \(X_0\), let \(a\) variables
precede that variable and \(b\) lie between it and \(X_0\), so
\(a+b=k-1\).  Its kernel, with \(t=t_\ell\), is
\[
  \frac{x^a(t-x)^b(1-t)^{m-k}}{a!b!(m-k)!}.
\]
The left identity in~\eqref{individual-bump-moments} evaluates this
polynomial at \(x=t\), giving zero unless \(b=0\).  Hence only the
variable immediately preceding \(X_0\) contributes, with polynomial
\[
  A_k(t)=\frac{t^{k-1}(1-t)^{m-k}}{(k-1)!(m-k)!}
  \qquad(k\geq1).
\]
For a variable after \(X_0\), the kernel is
\[
  \frac{t^k(x-t)^a(1-x)^b}{k!a!b!},\qquad a+b=m-k-1.
\]
The right identity in~\eqref{individual-bump-moments} takes its
negative value at \(x=t\).  Only the variable immediately following
\(X_0\) contributes, with the negative of
\[
  C_k(t)=\frac{t^k(1-t)^{m-k-1}}{k!(m-k-1)!}
  \qquad(k\leq m-1).
\]
Define \(A_0=0\) and \(C_m=0\).  Both polynomials have degree at most
\(m-1\), and \(A_k-C_k=p_k'\).  Averaging over \(t_\ell\) applies
the functional \(B_j\) of the predecessor to \(A_k\) and that of the
successor to \(C_k\), omitting a term when the neighbor is absent.
Since all these functionals equal \(B\), the total correction is
\(B(p_k')\), as claimed.
\end{proof}

\begin{proof}[Proof of the construction in Theorem~\ref{thm:individual-size}]
Put \(m=n-1\) and fix the prescribed integer \(K\geq Cn^2\), where
\(C\) is large enough for Lemma~\ref{individual-real-rule}.  Choose
its nodes \(u_q\) and the corresponding neighborhoods and corrections
from Lemma~\ref{individual-centered-bumps}.  As \(K\geq m^2\), we can
choose disjoint sets \(Q_1,\ldots,Q_m\), each of size \(m\).

Approximate each \(u_q\) by a rational \(t_q\in V_q\), retaining their
strict order.  For \(1\leq s\leq m\), put
\[
  \mu_s(t)=\frac1K\sum_{q=1}^K t_q^s,
  \qquad a_s(t)=\frac{1/(s+1)-\mu_s(t)}s.
\]
For each \(j\in[m]\), solve the rational Vandermonde system
\begin{equation}\label{individual-correction-system}
  \frac1K\sum_{q\in Q_j}\delta_{j,q}t_q^{s-1}=a_s(t)
  \qquad(1\leq s\leq m).
\end{equation}
The distinct nodes make its matrix invertible.  Its solution depends
continuously on all the nodes near \(u\) and tends to zero as \(t\to u\),
since the starting quadrature gives \(a_s(u)=0\).  The bumps are
uniformly bounded near these starting nodes.  A sufficiently close
rational approximation therefore ensures that
\[
  f_j(x)=1+\sum_{q\in Q_j}\delta_{j,q}h_{q,t_q}(x)\geq\frac12
  \qquad\text{for every }x\in[0,1].
\]
The full zeroth moments of the bumps vanish, so each \(f_j\) integrates
to one.  These are positive piecewise-constant densities with rational
breakpoints and rational values.

Equation~\eqref{individual-correction-system} makes all functionals
\(B_j\) in Lemma~\ref{individual-order-response} equal to one
functional \(B\).  Checking on monomials and constants shows that,
for every \(p\in\mathcal P_m\),
\[
  B(p')=\int_0^1p(t)\,dt-\frac1K\sum_{q=1}^Kp(t_q).
\]
The lemma therefore gives every full ordering probability
\[
  \int_0^1\frac{t^k(1-t)^{m-k}}{k!(m-k)!}\,dt
  =\frac1{(m+1)!}.
\]
We have obtained a permutation-fair continuous model with one uniform
\(K\)-point variable.

It remains to realize this model by finite uniform dice.  Partition
\([0,1]\) at all breakpoints of the \(f_j\) and at every \(t_q\).
Ignore the boundary points, which have probability zero for the
continuous variables.  On each remaining interval \(J_g\), all the
densities are positive rational constants.  Thus the masses
\[
  \alpha_{j,g}=\int_{J_g}f_j(x)\,dx
\]
are positive rationals with \(\sum_g\alpha_{j,g}=1\) for every \(j\).
Choose a common denominator \(D\) for these masses.  By
Theorem~\ref{thm:first-size}, take an equal-sided permutation-fair
family on the \(m\) other dice, with \(F\) faces each.  For each
interval \(J_g\), make a copy of its fair word and replace each
occurrence of letter \(j\) by \(D\alpha_{j,g}\) consecutive copies.
Claim~\ref{clm:restriction-blowup}(ii) shows that the resulting block
is permutation-fair; it has \(FD\alpha_{j,g}\) faces of die \(j\).
Concatenate these blocks in interval order and insert one distinguished
face at each boundary \(t_q\).  Every other die now has exactly \(FD\)
faces, whereas the distinguished die still has exactly \(K\).

For each other die, the probabilities of selecting the different
intervals are exactly \(\alpha_{j,g}\), as in the continuous model.
Conditional on these interval choices, variables in different intervals
have a determined relative order.  Within a common interval, the
continuous variables are independent uniforms on that interval, so their
relative order is uniform.  The corresponding discrete variables also
have a uniform relative order, by fairness of the block and its
restrictions, using Claim~\ref{clm:restriction-blowup}(i).  Different
interval groups involve disjoint sets of independent dice, so their
internal orders combine independently.  Consequently the full order
distribution, including the distinguished faces at the interval
boundaries, is unchanged.  Assigning consecutive distinct integer labels
to the concatenated word produces the required finite dice.
\end{proof}

\section{Weaker notions of exact fairness}\label{sec:weaker}

For this section, an \emph{outcome} means one uniformly chosen occurrence of
every letter.  A string is \emph{go-first fair} if each letter is equally
likely to be the leftmost selected occurrence.  It is \emph{place fair} if,
for every rank, each letter is equally likely to occupy that rank.  These are
full-set notions: only the outcome involving all \(n\) dice is constrained.
We call either property \emph{hereditary} if it is required after restriction
to every nonempty subset of the dice.  In contrast, permutation fairness is
automatically hereditary by Claim~\ref{clm:restriction-blowup}(i).  The
full-set/hereditary distinction matters for both constructions and lower
bounds below.

\subsection{The exact full-set go-first optimum}

\begin{theorem}\label{thm:go-first-optimum}
Among full-set go-first-fair strings on \(n\) letters, allowing unequal
multiplicities, the minimum length is
\[
  L_{\mathrm{GF}}(n)=\frac{n(n+1)}2.
\]
\end{theorem}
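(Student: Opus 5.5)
The plan is to sort the letters by their first occurrence and to reduce both bounds to one product inequality. Let \(\ell_1,\ldots,\ell_n\) be the letters in order of first occurrence, and write \(c_j\) for the multiplicity of \(\ell_j\). The key observation is the following. If some \(\ell_j\) with \(j\leq k\) selects its first occurrence, then that occurrence precedes every occurrence of \(\ell_{k+1},\ldots,\ell_n\). The leftmost selected occurrence therefore belongs to one of \(\ell_1,\ldots,\ell_k\). Since the choices are independent, go-first fairness gives
\[
  \frac kn\;\geq\;1-\prod_{j=1}^k\Bigl(1-\frac1{c_j}\Bigr),
  \qquad\text{that is,}\qquad
  \prod_{j=1}^k\Bigl(1-\frac1{c_j}\Bigr)\geq\frac{n-k}{n}
  \qquad(1\leq k\leq n).
\]
The choice \(c_j=n-j+1\) satisfies all of these with equality by telescoping, and its sum is \(n(n+1)/2\). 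This suggests both the construction and the extremal configuration for the lower bound.

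For the upper bound, take multiplicities \(c_j=n-j+1\). Form the string \(\ell_1\ell_2\cdots\ell_n\), followed by the remaining \(c_j-1\) copies of each \(\ell_j\) in any order. For \(n=2\) this is \(1\,2\,1\). The letter \(\ell_n\) occurs only once, at position \(n\), before the whole tail. Hence the winner is the smallest \(j\) such that \(\ell_j\) selects its first occurrence, and the tail occurrences never win. Therefore
\[
  \Pr[\ell_k\text{ first}]=\frac1{c_k}\prod_{j<k}\Bigl(1-\frac1{c_j}\Bigr)
  =\frac1{n-k+1}\cdot\frac{n-k+1}{n}=\frac1n .
\]
The total length is \(n(n+1)/2\).

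For the lower bound, we must show that any positive integers \(c_1,\ldots,c_n\) satisfying the displayed product inequalities have \(\sum_jc_j\geq\sum_j(n-j+1)\). The case \(k=1\) already gives \(c_1\geq n\). We argue with logarithms. Put \(x_j=-\log(1-1/c_j)\) and \(y_j=-\log(1-1/(n-j+1))\). The hypotheses say that \(\sum_{j\leq k}x_j\leq\sum_{j\leq k}y_j\) for \(k<n\), and \(y_1<y_2<\cdots<y_{n-1}\). The map \(g(x)=1/(1-e^{-x})\) recovers \(c\) and is convex and decreasing. A weak-majorization (Karamata/Tomić) inequality for decreasing convex functions should then give \(\sum_{j<n}g(x_j)\geq\sum_{j<n}g(y_j)\). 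For the last letter we simply use \(c_n\geq1\).

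The main obstacle is making this majorization step rigorous. Tomić's theorem is usually stated for sequences sorted in a matching order, while our \(x_j\) need not be sorted. I would first rearrange the \(x_j\) in increasing order. This only strengthens the partial-sum constraints, because the sum of the \(k\) smallest \(x_j\) is at most the sum of the first \(k\). Then both sequences are increasing, and the standard result for decreasing convex \(g\) under weak submajorization from below applies.

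If that bookkeeping becomes awkward, I would use a direct integer exchange argument instead. Sort the multiplicities decreasingly as \(c_{(1)}\geq c_{(2)}\geq\cdots\). Then show \(c_{(j)}\geq n-j+1\) for each \(j\), by taking the smallest \(j\) that violates this and applying the \(k=j\) product inequality to the \(j\) largest multiplicities. This gives a contradiction because each factor is then at most \(1-1/c_{(j)}\) with \(c_{(j)}\leq n-j\). Some care is needed here, since the inequality is indexed by first-occurrence order and not by size. Sorting by size is nevertheless legitimate: the \(k\) letters of largest multiplicity maximize \(\prod_{j\leq k}(1-1/c_j)\) among all \(k\)-subsets, and those subsets include the first \(k\) letters in occurrence order.
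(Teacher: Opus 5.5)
Your proof is correct. The upper bound is the paper's staircase; your remark that the tail may be in any order, because $\ell_n$ always sits at position $n$, is a harmless generalization. The lower bound, however, follows a genuinely different route.

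The paper uses a sharper event. With $p_k$ the first occurrence of the $k$-th letter $i_k$, let $E_k$ be the event that every earlier letter selects an occurrence after $p_k$. If $E_k$ fails, the winner is among $i_1,\dots,i_{k-1}$, so $\Pr(E_k)\ge (n-k+1)/n$. Since $i_k$ wins whenever it picks its first occurrence and $E_k$ holds, $1/n\ge\Pr(E_k)/m_{i_k}$. This gives $m_{i_k}\ge n-k+1$ letter by letter, and summing finishes with no convexity.

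Your event (``some $\ell_j$ with $j\le k$ picks its first occurrence'') yields only the product inequalities, which are strictly weaker than these per-letter bounds. So you need the Tomi\'c--Weyl step to recover the sum. That step is sound as you set it up:
\begin{itemize}
\item the $k=n-1$ inequality forces $c_j\ge 2$ for $j<n$, so every $x_j$ is finite and lies in $(0,\infty)$, where $g$ is decreasing and convex;
\item the sum of the $k$ smallest $x_j$ is at most $\sum_{j\le k}y_j$, which is the sum of the $k$ smallest $y_j$;
\item the weak-majorization theorem for decreasing convex functions then gives $\sum_{j<n}c_j\ge\sum_{j<n}(n-j+1)$.
\end{itemize}
The paper's route buys more. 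It gives per-letter structural information, forcing $m_{i_k}=n-k+1$ in any optimal string, and the same argument is reused for the approximate first-place bound \eqref{eq:approx-first-lower}. Yours shows that even the real relaxation defined by the product constraints has the same optimum.

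You should drop the fallback integer exchange argument, because it does not work. First, the inequality is reversed. For $i<j$ you have $c_{(i)}\ge c_{(j)}$, hence $1-1/c_{(i)}\ge 1-1/c_{(j)}$. So the product over the $j$ largest multiplicities is only bounded above by $1-1/c_{(j)}\le (n-j-1)/(n-j)$, which gives no contradiction with $(n-j)/n$ once $n(j-1)\ge j^2$.

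More fundamentally, the per-rank claim $c_{(j)}\ge n-j+1$ does not follow from the product inequalities at all. For $n=6$, the multiplicities $(9,4,4,3,2,1)$ satisfy every one of them, with equality for $2\le k\le 5$, yet $c_{(2)}=4<5$. Such vectors are excluded for actual strings only by the paper's sharper per-letter bound. For your constraints, the majorization argument is the right tool.
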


\begin{proof}
For the upper bound, take
\begin{equation}\label{eq:go-first-staircase}
  s_n=12\cdots n\,(n-1)\,(n-2)^2\cdots 2^{n-2}1^{n-1}.
\end{equation}
Thus letter \(i\) occurs \(m_i=n-i+1\) times: once in the initial block and
\(n-i\) times in the descending staircase suffix.  For \(i<n\), letter \(i\)
wins precisely when it selects its initial occurrence and every letter
\(j<i\) selects one of its later occurrences.  Hence
\[
  \Pr(i\text{ wins})
  =\frac1{n-i+1}\prod_{j=1}^{i-1}\frac{n-j}{n-j+1}
  =\frac1n.
\]
Letter \(n\), which has a single occurrence, wins with probability
\[
  \prod_{j=1}^{n-1}\frac{n-j}{n-j+1}=\frac1n.
\]
The string is therefore go-first fair and has length \(n(n+1)/2\).

For the matching lower bound, let an arbitrary go-first-fair string have
letter multiplicities \(m_i\).  Order its letters as
\(i_1,\ldots,i_n\) according to the positions
\(p_1<\cdots<p_n\) of their first occurrences.  Let \(E_k\) be the event
that every earlier letter \(i_j\), \(j<k\), selects an occurrence after
\(p_k\).  If \(E_k\) fails, the winning selected occurrence belongs to one
of \(i_1,\ldots,i_{k-1}\).  Fairness therefore gives
\[
  \Pr(E_k^c)\leq\frac{k-1}{n},
  \qquad
  \Pr(E_k)\geq\frac{n-k+1}{n}.
\]
If \(i_k\) selects its first occurrence and \(E_k\) occurs, then \(i_k\)
wins.  These two events are independent, and consequently
\[
  \frac1n=\Pr(i_k\text{ wins})
  \geq\frac{\Pr(E_k)}{m_{i_k}}
  \geq\frac{n-k+1}{n m_{i_k}}.
\]
Thus \(m_{i_k}\geq n-k+1\).  Summing over \(k\) proves the lower bound.
\end{proof}

The hereditary versions of go-first and place fairness are arithmetically
more expensive even when the dice may have different numbers of faces.  Let
die \(i\) have \(m_i\) faces.  For every prime \(p\leq n\) and every
\(p\)-element set \(S\), hereditary go-first fairness forces
\(p\mid\prod_{i\in S}m_i\).  Hence at most \(p-1\) face counts can fail to
be divisible by \(p\), and therefore
\begin{equation}\label{eq:hereditary-primorial}
  \prod_{i=1}^n m_i
  \geq\prod_{\substack{p\leq n\\p\ \mathrm{prime}}}p^{\,n-p+1}
  =2^{\Theta(n^2)},
  \qquad \max_i m_i\geq2^{\Omega(n)}.
\end{equation}
The same argument at any fixed rank applies to hereditary place fairness.
When all \(m_i\) equal \(m\), the argument recovers
\(\operatorname{rad}(n!)\mid m\).

\subsection{An exact place-fair construction}

The Hahn construction already gives place-fair dice, as a consequence of
full permutation fairness, with \(2^{O(n\log n)}\) faces per die.  Thus the
construction below does not improve the asymptotic scale merely by weakening
the notion of fairness.  Its interest is instead structural: the rank counts
turn out to be controlled by equalities of moments through degree \(n-2\),
bringing the classical problem of equal sums of like powers directly into
the design of dice.  It gives a simple explicit construction independent of
the Hahn machinery, and every improvement in consecutive multigrade
partitions automatically yields place-fair dice with fewer faces.

Our route to exact place fairness is a consecutive
Prouhet--Tarry--Escott partition.  We first isolate the dice-specific
reduction, then retain Prouhet's especially transparent digit-sum
construction~\cite{prouhet1851}, and finally invoke the stronger partition
theorem of Choudhry~\cite{choudhry2020}.

Prouhet's construction and its relation to the later Tarry--Escott problem
are discussed by Lehmer and Wright~\cite{lehmer1947,wright1949,wright1959}.
Roberts studied partitions of consecutive integers and their digit
structure~\cite{roberts1964a,roberts1964b}, while Bolker et al. connect the
problem to generalized Thue--Morse sequences~\cite{bolker-et-al2016}.
For two classes, optimizing the interval length is also the problem of
constructing Littlewood polynomials with a high-order zero at one; see
\cite{boyd1997,boyd2002,berend-golan2006,buhler-et-al2021}.  These results
motivate shorter colorings, but the reduction to place-fair dice below works
for any number of colors.

\begin{lemma}[Prouhet digit-sum identity~\cite{prouhet1851}]\label{lem:prouhet}
Let \(n\geq2\), \(m=n^{n-1}\), and write
\(t=\sum_{\ell=0}^{n-2}d_\ell n^\ell\), where
\(0\leq d_\ell<n\).  Put
\[
  c(t)=\sum_{\ell=0}^{n-2}d_\ell\pmod n.
\]
For every \(0\leq q\leq n-2\), the sum
\[
  \sum_{\substack{0\leq t<m\\c(t)=a}}t^q
\]
is independent of \(a\in\mathbb Z/n\mathbb Z\).
\end{lemma}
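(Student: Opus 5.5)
We will encode the class sums in a generating function and reduce the claim to a vanishing statement. Let \(\zeta\) be a primitive \(n\)-th root of unity, and for \(j\in\mathbb Z/n\mathbb Z\) consider
\[
  G_j(x)=\sum_{t=0}^{m-1}\zeta^{j\,c(t)}e^{tx}.
\]
Because \(c(t)\) is the digit sum modulo \(n\) and the digits of \(t\) range independently over \(\{0,\ldots,n-1\}\), this sum factors over the \(n-1\) digit positions:
\[
  G_j(x)=\prod_{\ell=0}^{n-2}\Bigl(\sum_{d=0}^{n-1}\zeta^{jd}e^{d n^\ell x}\Bigr).
\]
For \(j\not\equiv0\), each factor vanishes at \(x=0\), since \(\sum_d\zeta^{jd}=0\). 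Hence each factor is \(O(x)\), and the product vanishes to order at least \(n-1\) at \(x=0\).

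Reading off Taylor coefficients, this says
\[
  \sum_{t}\zeta^{jc(t)}t^q=0\qquad(0\leq q\leq n-2,\ j\not\equiv0).
\]
Now write \(S_a=\sum_{c(t)=a}t^q\). The vanishing above states that the discrete Fourier transform \(\sum_a\zeta^{ja}S_a\) is zero for every nonzero \(j\). By Fourier inversion on \(\mathbb Z/n\mathbb Z\),
\[
  S_a=\frac1n\sum_j\zeta^{-ja}\hat S_j=\frac1n\hat S_0=\frac1n\sum_t t^q,
\]
which is independent of \(a\), as claimed.

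The argument has no real obstacle. The only point needing care is the factorization, which rests on two facts: the map \(t\mapsto(d_0,\ldots,d_{n-2})\) is a bijection from \([0,m)\) onto \(\{0,\ldots,n-1\}^{n-1}\), and both \(e^{tx}\) and \(\zeta^{jc(t)}\) are multiplicative over the digits. The latter holds because \(\zeta^{j\cdot}\) is well defined modulo \(n\), so the reduction of the digit sum mod \(n\) is harmless. We should also check the degree count: there are \(n-1\) factors, each divisible by \(x\), which gives vanishing order \(n-1\) and therefore covers all exponents \(q\leq n-2\).
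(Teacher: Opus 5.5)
Your proposal is correct and follows essentially the same argument as the paper. Both factor the twisted exponential generating function \(\sum_t\zeta^{jc(t)}e^{tx}\) over the \(n-1\) digit positions, note that each factor vanishes at \(x=0\) when \(j\not\equiv0\), read off the vanishing twisted power sums for \(q\leq n-2\), and conclude by Fourier inversion on \(\mathbb Z/n\mathbb Z\).
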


\begin{proof}
Let \(\omega\) be a primitive \(n\)-th root of unity.  For
\(1\leq h<n\),
\[
  \sum_{t=0}^{m-1}\omega^{hc(t)}e^{tx}
  =\prod_{\ell=0}^{n-2}
    \sum_{d=0}^{n-1}(\omega^he^{n^\ell x})^d.
\]
Every factor on the right has a zero at \(x=0\), since \(\omega^h\neq1\).
Thus the product has a zero of order at least \(n-1\).  Taking derivatives
of orders \(q=0,1,\ldots,n-2\) at zero shows
\[
  \sum_{t=0}^{m-1}\omega^{hc(t)}t^q=0
  \qquad(0\leq q\leq n-2).
\]
Fourier inversion on \(\mathbb Z/n\mathbb Z\) now says exactly that the
power sums of all color classes are equal.
\end{proof}

\begin{theorem}[Moment-coloring reduction]\label{thm:place-moment}
Let \(n\geq2\) and \(m\geq1\) be integers, and suppose that there is a map
\[
  c:\{0,1,\ldots,m-1\}\longrightarrow\mathbb Z/n\mathbb Z
\]
such that, for every \(0\leq q\leq n-2\), the sum
\[
  \sum_{\substack{0\leq t<m\\c(t)=a}}t^q
\]
is independent of \(a\in\mathbb Z/n\mathbb Z\).  Then there are \(n\)
equal-sided, full-set place-fair dice with \(m\) faces each.
\end{theorem}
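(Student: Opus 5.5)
The plan is to realize the coloring by a word made of \(m\) consecutive blocks of length \(n\), where each block is a cyclic shift of \(12\cdots n\) determined by the color. For \(0\leq t<m\), let block \(t\) list the letters \(c(t)+1,c(t)+2,\ldots,c(t)+n\), read modulo \(n\) with representatives in \([n]\). The full string is the concatenation of blocks \(0,1,\ldots,m-1\). Each letter occurs exactly once per block, so all dice have \(m\) faces. Within block \(t\), letter \(i\) sits at offset \(r=r_i(t)\equiv i-c(t)-1\pmod n\), with \(0\leq r<n\). In particular \(r_i(t)\) depends only on \(i-c(t)\).

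Next I would compute the rank distribution of a fixed letter \(i\) conditioned on its chosen occurrence, say block \(t\) with offset \(r\). Every other letter \(j\) independently lies before this occurrence with probability \((t+\epsilon_j)/m\). Here \(\epsilon_j=1\) for the \(r\) letters preceding \(i\) inside the block, and \(\epsilon_j=0\) otherwise. The generating function for the number of predecessors is therefore
\[
  G_r(t,z)=m^{-(n-1)}\,(t+1+(m-t-1)z)^{r}\,(t+(m-t)z)^{n-1-r},
\]
and the probability that \(i\) has rank \(k+1\) is the \(z^k\)-coefficient of \(\frac1m\sum_{t}G_{r_i(t)}(t,z)\). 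As a polynomial in \(t\), \(G_r\) has degree \(n-1\). Its \(t^{n-1}\)-coefficient is \(m^{-(n-1)}(1-z)^{n-1}\), which does not depend on \(r\).

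The main step is to show that the average is independent of \(i\). I would write \(G_r(t,z)=m^{-(n-1)}(1-z)^{n-1}t^{n-1}+\sum_{q\leq n-2}g_{r,q}(z)t^q\). The top-degree part sums over all \(t\) to something independent of \(i\). For the lower part, I would group the values of \(t\) by color \(a\). The offset is then \(r\equiv i-a-1\), so
\[
  \sum_t\sum_{q\leq n-2}g_{r_i(t),q}(z)t^q
  =\sum_{q\leq n-2}\sigma_q\sum_{a\in\mathbb Z/n}g_{(i-a-1)\bmod n,\,q}(z),
\]
where \(\sigma_q=\sum_{c(t)=a}t^q\) is the common power sum guaranteed by the hypothesis. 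As \(a\) runs over \(\mathbb Z/n\), the residue \(i-a-1\) runs over all offsets \(0,\ldots,n-1\). Hence the inner sum equals \(\sum_{r=0}^{n-1}g_{r,q}(z)\), which is independent of \(i\). Every rank distribution is therefore the same for all letters, which is full-set place fairness.

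The only delicate point is bookkeeping. The degree-\((n-1)\) term must be checked to be offset-independent, because the hypothesis supplies moments only through degree \(n-2\). The cyclic-shift design must also make the offset a function of \(i-c(t)\) alone, so that summing over colors turns into a sum over all offsets. I expect the leading-coefficient check to be the step needing care. Finally, labeling the faces consecutively along the string gives the dice.
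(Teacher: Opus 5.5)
Your proposal is correct and follows the paper's proof essentially step for step. The shared ingredients are cyclic-shift blocks determined by \(c(t)\), a generating polynomial in \(t\) of degree at most \(n-1\) whose top coefficient (\((1-z)^{n-1}\) up to sign and scaling) is independent of the offset, and the equal power sums through degree \(n-2\). Those power sums absorb the lower-order part because the offsets \(i-a-1\) run over all residues as \(a\) does.

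One harmless slip: your \(G_r\) attaches \(z\) to the factors \(m-t-1\) and \(m-t\), so it actually counts successors, not predecessors as you state; the paper's \(F_{t,r}\) attaches \(z\) to \(t+1\) and \(t\). Since you show the entire polynomial is independent of \(i\), the conclusion is unaffected.
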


\begin{proof}
Fix a coloring \(c\) satisfying the hypothesis.  For \(0\leq t<m\), let \(B_t\) be
the cyclic block of length \(n\) whose letter at within-block position
\(r\in[n]\) is the unique \(i\in[n]\) satisfying
\[
  i\equiv r-c(t)\pmod n.
\]
Concatenate the blocks \(B_0B_1\cdots B_{m-1}\).  Every block contains every
letter once, so every die has \(m\) faces.

Fix a face of letter \(i\) in block \(t\), and suppose that its within-block
position is \(r\).  For each of the \(r-1\) letters before it in that block,
there are \(t+1\) faces before the chosen face and \(m-t-1\) after it.  For
each of the remaining \(n-r\) letters, the corresponding numbers are \(t\)
and \(m-t\).  Therefore the generating polynomial for the number of other
selected faces lying before this face is
\begin{equation}\label{eq:place-generating}
  F_{t,r}(z)=
  \bigl((m-t-1)+(t+1)z\bigr)^{r-1}
  \bigl((m-t)+tz\bigr)^{n-r}.
\end{equation}
For \(0\leq k<n\), put \(P_{r,k}(t)=[z^k]F_{t,r}(z)\).  This is a polynomial
in \(t\) of degree at most \(n-1\).  Its leading coefficient is
\[
  [z^k](z-1)^{n-1},
\]
which is independent of \(r\).

Letter \(i\) occupies position \(r\) in block \(t\) exactly when
\(c(t)\equiv r-i\pmod n\).  Hence its number of outcomes at rank \(k+1\) is
\begin{equation}\label{eq:place-count}
  C_{i,k}=\sum_{r=1}^n
     \sum_{\substack{0\leq t<m\\c(t)\equiv r-i\; (\mathrm{mod}\ n)}}
     P_{r,k}(t).
\end{equation}
For powers \(t^q\) with \(q\leq n-2\), the hypothesis makes the inner class
sum independent of its residue.  The coefficient of
\(t^{n-1}\) is independent of \(r\), while the classes
\(r-i\), \(r\in[n]\), run through all residues.  Thus~\eqref{eq:place-count}
is independent of \(i\).  For each outcome exactly one die occupies rank
\(k+1\), so every die occupies it in exactly \(m^n/n\) outcomes.  This holds
for every rank, proving place fairness.
\end{proof}

The digit-sum coloring of Lemma~\ref{lem:prouhet}, followed by
Theorem~\ref{thm:place-moment}, gives the especially transparent bound
\(m=n^{n-1}\).  The following stronger Prouhet--Tarry--Escott construction
improves this by a factor of \(n/2\).

\begin{theorem}[Improved place-fair upper bound]\label{thm:place-prouhet}
For every \(n\geq2\), there are \(n\) equal-sided, full-set place-fair dice
with
\[
  m=2n^{n-2}
\]
faces each.  For \(n=1\), one face suffices.
\end{theorem}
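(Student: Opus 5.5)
By Theorem~\ref{thm:place-moment}, it suffices to exhibit, for each \(n\geq2\), a coloring \(c:\{0,\ldots,m-1\}\to\mathbb Z/n\mathbb Z\) with \(m=2n^{n-2}\) whose \(n\) color classes have equal power sums in every degree \(0\leq q\leq n-2\). The case \(n=1\) is trivial: a single die with one face is place fair. The plan is to replace Prouhet's digit-sum coloring from Lemma~\ref{lem:prouhet} with a more efficient consecutive multigrade partition. This is what the introduction of this subsection attributes to Choudhry~\cite{choudhry2020}.

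\textbf{Step 1: saving one digit.} Prouhet uses \(n-1\) base-\(n\) digits, because each factor \(\sum_d(\omega^he^{n^\ell x})^d\) contributes one order of vanishing at \(x=0\). I would aim for a generating function \(\sum_t\omega^{hc(t)}e^{tx}\) that still vanishes to order \(n-1\) at \(x=0\) for every \(1\leq h<n\), but built from \(n-2\) base-\(n\) digits together with one extra binary digit. For this one factor must contribute order \(2\) and cost only a factor \(2\) in length. Concretely, I would take \(t=t_0+2u\) with \(t_0\in\{0,1\}\) and \(u<n^{n-2}\), or alternatively put the binary digit on top. Then c(t) is defined as a digit sum of \(u\) plus a correction depending on \(t_0\), possibly a sign flip or a twist \(c\mapsto -c\) on odd \(t\). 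The twist should be chosen so that the pair \((t_0,\text{lowest digit})\) yields a factor with a double zero. For even \(n\), the pairing \(a\leftrightarrow a+n/2\) suggests one such mechanism.

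\textbf{Step 2: verification by Fourier inversion.} Once the coloring is fixed, the check repeats the proof of Lemma~\ref{lem:prouhet}. We expand the exponential generating sum of \(\omega^{hc(t)}\) as a product of factors and count the order of the zero at \(x=0\). We then differentiate up to order \(n-2\) and invert the Fourier transform on \(\mathbb Z/n\mathbb Z\).

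\textbf{Main obstacle.} The hard step is producing a factor with a double zero that works uniformly for all \(h\neq0\), including \(h\) with \(\omega^h=-1\). I do not expect to find this by hand. If the elementary attempt fails, I would invoke Choudhry's theorem directly: it supplies a partition of \(\{0,\ldots,2n^{n-2}-1\}\) (or of consecutive integers, which can be shifted since equal power sums are translation invariant) into \(n\) classes with equal sums of \(q\)-th powers for \(q\leq n-2\). Feeding that partition into Theorem~\ref{thm:place-moment} gives the result. The one point to check is that the classes may have unequal sizes: the \(q=0\) condition forces equal sizes, so \(n\mid m\), which holds here.
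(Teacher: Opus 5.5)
Your fallback is the paper's proof: for \(n\ge3\) the paper shifts Choudhry's partition of \(1,\ldots,2n^{n-2}\) into \(n\) \emph{equally sized} classes by \(-1\) and applies Theorem~\ref{thm:place-moment}, and for \(n=2\) it uses the classes \(\{0\},\{1\}\). Note that equal class sizes are part of Choudhry's theorem, and they are what makes the shift legitimate; they do not follow from \(n\mid m\), which is only a necessary condition. Incidentally, your Step~1 also works with the right twist, \(d\mapsto-1-d\) rather than \(d\mapsto-d\): color both \(j\) and \(2n-1-j\) by \(j\) for \(0\le j<n\), so that \(\sum_{j=0}^{n-1}\omega^{hj}\bigl(e^{jx}+e^{(2n-1-j)x}\bigr)\) and its derivative vanish at \(x=0\) for every \(1\le h<n\); adding the digit sum of \(n-3\) further base-\(n\) digits with place values \(2n\cdot n^{\ell}\) \((0\le\ell\le n-4)\) then gives a zero of order \(n-1\), hence a self-contained construction with \(m=2n\cdot n^{n-3}=2n^{n-2}\) for \(n\ge3\).
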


\begin{proof}
For \(n=2\), the digit-sum coloring of Lemma~\ref{lem:prouhet} has \(m=2\).
Now let \(n\geq3\).  Choudhry~\cite[Theorem~6]{choudhry2020} partitions the
consecutive integers
\[
  1,2,\ldots,2n^{n-2}
\]
into \(n\) equally sized classes whose sums of \(q\)-th powers agree for
every \(1\leq q\leq n-2\).  Equal class sizes give the assertion for
\(q=0\), and translating every integer by \(-1\) preserves equality of all
moments through degree \(n-2\).  The translated classes therefore satisfy
the hypothesis of Theorem~\ref{thm:place-moment} with \(m=2n^{n-2}\).
The same consecutive-interval partition theorem was independently obtained
by Pollack and Trevi\~no~\cite{pollack-trevino2026}.
\end{proof}

\section{Approximate permutation fairness}\label{sec:approximate}

Exact permutation fairness forces exponentially many faces on most dice.
We now ask how small the dice can be if every order is allowed a relative
error of at most $\varepsilon$.  Thus the required inequality is
\begin{equation}\label{eq:pointwise-approx}
  \left|n!P(\pi)-1\right|\leq\varepsilon
  \qquad\text{for every }\pi\in S_n.
\end{equation}
This controls even individually rare orders.  In particular, it implies
$\lVert P-U\rVert_1\leq\varepsilon$ and
$d_{\mathrm{TV}}(P,U)\leq\varepsilon/2$, where $U$ is uniform on $S_n$.
Here $\lVert P-U\rVert_1=\sum_{\pi\in S_n}|P(\pi)-1/n!|$ and
$d_{\mathrm{TV}}(P,U)=\frac12\lVert P-U\rVert_1$.

\subsection{Classical shuffle constructions}

The periodic word $(12\cdots n)^m$ realizes an inverse $m$-riffle shuffle:
each letter independently chooses a uniform level in $[m]$, and ties are
resolved in increasing letter order.  If $d=\operatorname{des}(\pi)$ is the
number of positions $j$ with $\pi_j>\pi_{j+1}$,
the classical $P$-partition count gives
\begin{equation}\label{eq:descent-count}
  P(\pi)=\frac1{m^n}\binom{m+n-1-d}{n},
  \qquad
  n!P(\pi)=\prod_{k=-d}^{n-1-d}\left(1+\frac{k}{m}\right)
  \quad(m\geq n).
\end{equation}
See Stanley~\cite{stanley1972} and Bayer and Diaconis~\cite{bayer-diaconis1992}.
The product decreases with $d$.  Its two extreme values show that
$m\geq n(n-1)/\varepsilon$ suffices for \eqref{eq:pointwise-approx} when
$0<\varepsilon\leq1$: the increasing-order ratio is at most
$e^{\varepsilon/2}\leq1+\varepsilon$, and the decreasing-order ratio is at
least $1-\varepsilon/2$.  Conversely, for $0<\varepsilon<1$ the decreasing
order forces $m\geq n$ and
\[
  m\geq\frac{n(n-1)}{2[-\log(1-\varepsilon)]}.
\]
Thus this periodic family requires $\Theta(n^2/\varepsilon)$ faces per
die for $0<\varepsilon\leq1/2$.

A better classical construction repeats a palindrome:
\[
  (12\cdots n\,n\cdots21)^m.
\]
Its order distribution is exactly the shelf shuffle of Diaconis, Fulman,
and Holmes~\cite{diaconis-fulman-holmes2013}, as given by their
Description~1.  Each die independently chooses one of the \(2m\)
monotone runs uniformly; reading the chosen letters groups them by run,
in increasing order within odd runs and decreasing order within even
runs.  This is precisely their packet construction, so their
Theorem~3.1 applies with the valleys of the output permutation itself.
Their Theorem~3.4 gives, for fixed $c>0$ and $m\sim cn^{3/2}$,
\[
  \max_{\pi\in S_n}|n!P(\pi)-1|
  \longrightarrow e^{1/(12c^2)}-1.
\]
Consequently, for each fixed positive error, $O(n^{3/2})$ faces per die
suffice.  Our construction improves this bound by independently relabeling
the palindrome blocks.

\subsection{Independent palindrome blocks}

For a permutation $\rho$, let $\rho^{\mathrm{rev}}$ be its reversal.
Choose $\rho_1,\ldots,\rho_m$ independently and uniformly from $S_n$, and
form the word
\begin{equation}\label{eq:random-palindrome-word}
  W=(\rho_1\rho_1^{\mathrm{rev}})
       \concat\cdots\concat(\rho_m\rho_m^{\mathrm{rev}}).
\end{equation}
Every resulting die has exactly $2m$ faces.  The randomness here chooses
the dice; after the word has been chosen, their order distribution $P_W$
is fixed.

\begin{theorem}[Approximate fairness from random palindromes]
\label{thm:approximate}
There is an absolute constant $C$ such that, for every $n\geq3$ and
$0<\varepsilon\leq1$, if
\[
  m\geq Cn^{13/10}(\log n)^{1/5}\varepsilon^{-2/5},
\]
then the random word \eqref{eq:random-palindrome-word} satisfies
\eqref{eq:pointwise-approx} simultaneously for all $\pi\in S_n$ with
probability at least $1/2$.  In particular, there exist $n$ equal-sided
dice with
\[
  O\!\left(n^{13/10}(\log n)^{1/5}\varepsilon^{-2/5}\right)
\]
faces each satisfying \eqref{eq:pointwise-approx}.
\end{theorem}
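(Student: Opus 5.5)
The plan is to show first that the \emph{expected} order distribution of the random word is exactly uniform. Then we bound the fluctuations of \(n!P_W(\pi)-1\) for a single fixed \(\pi\) by a high-moment estimate, and finish with a union bound over all \(n!\) permutations.

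\textbf{Step 1: a level description.} For a fixed \(W\), describe a roll as follows. Each die \(i\) independently chooses a uniform block \(b_i\in[m]\) and a uniform half \(h_i\in\{0,1\}\) of that block, i.e.\ one of \(2m\) levels. Dice are ordered by level. Within a level with \(h=0\) they are ordered by \(\rho_{b}\); within a level with \(h=1\) they are ordered by \(\rho_b^{\mathrm{rev}}\). Conditioning on the level vector \(L=(b_i,h_i)_i\), we write
\[
  n!P_W(\pi)=\mathbb E_L\Bigl[\,n!\,\mathbf 1[\text{levels weakly increase along }\pi]\prod_{b}X_b(L,\rho_b)\Bigr],
\]
where \(X_b\in\{0,1\}\) records whether \(\rho_b\) orders the dice in block \(b\) consistently with \(\pi\).

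\textbf{Step 2: the mean is exactly uniform.} For a uniform \(\rho_b\), the two disjoint groups \(\{i:b_i=b,h_i=0\}\) and \(\{i:b_i=b,h_i=1\}\) receive independent uniform internal orders. So, averaged over \(\rho\), the model becomes: i.i.d.\ uniform levels with uniform random tie-breaking. This model is exchangeable in the dice. Hence \(\mathbb E_\rho[n!P_W(\pi)]=1\) for every \(\pi\).

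\textbf{Step 3: moment bound for one \(\pi\).} Fix \(\pi\) and set \(Z=n!P_W(\pi)\), a function of the independent variables \(\rho_1,\ldots,\rho_m\). Consider its Efron--Stein (Hoeffding) decomposition \(Z-1=\sum_{d\ge1}Z^{=d}\) over sets of blocks. Bonami's hypercontractive inequality for product spaces, applied with the finite alphabet \(S_n\), gives
\[
  \|Z-1\|_q\le\sum_{d\ge1}(\kappa q)^{d/2}\,\|Z^{=d}\|_2 .
\]
Here \(\kappa\) is a constant, or at worst \(\log n!\)-dependent, which must then be absorbed into the choice of \(q\). The quantity \(\|Z^{=d}\|_2^2\) is computed with two independent level vectors \(L,L'\). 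A set \(T\) of \(d\) blocks contributes only when, in every block of \(T\), both \(L\) and \(L'\) put at least two dice in a common half, and the corresponding tie-break events are correlated. The palindrome structure should make single-pair collisions cancel at leading order, in the same way that Step~2 cancels the mean. If so, the surviving weight per block is governed by higher-order collisions. I would then aim for a multinomial collision estimate of the form
\[
  \|Z^{=d}\|_2\le\bigl(K\,n^{11/4}m^{-5/2}\bigr)^{d}
\]
(or whatever exponents the count actually yields). This is the target exponent that makes the arithmetic below match the stated \(13/10\).

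\textbf{Step 4: union bound.} Take \(q\asymp n\log n\), so that Markov's inequality on \(|Z-1|^q\) gives
\[
  \Pr\bigl(|Z-1|>\varepsilon\bigr)\le\varepsilon^{-q}\|Z-1\|_q^q\le\frac{1}{2\,n!}.
\]
This requires \(\sqrt{q}\,Kn^{11/4}m^{-5/2}\le\varepsilon/2\), which holds once \(m\ge Cn^{13/10}(\log n)^{1/5}\varepsilon^{-2/5}\). The union bound over the \(n!\) permutations then gives success probability at least \(1/2\), and any successful \(W\) supplies the dice.

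\textbf{Main obstacle.} The hard step is the collision estimate in Step~3. It requires identifying exactly which within-block configurations survive the centering (the pair cancellation coming from the palindrome), and then summing the resulting multinomial probabilities over two independent level vectors uniformly in \(d\). I would start with \(d=1\) to find the correct per-block exponent, and then argue that distinct blocks in \(T\) factor approximately, up to a multinomial dependence correction.
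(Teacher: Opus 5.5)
Your Steps 1, 2 and 4 are sound and match the paper's outer argument. The mean identity is the paper's $\mathbb E_{\rho_i}\delta_i(A)=0$, and Markov plus a union bound with $q\asymp n\log n$ is exactly how the theorem is deduced from Lemma~\ref{lem:palindrome-moment}. But the whole content of the theorem is that moment estimate, and Step~3 does not prove it. The target $\|Z^{=d}\|_2\le(Kn^{11/4}m^{-5/2})^d$ is read off from the exponent $13/10$ (``or whatever exponents the count actually yields''), not derived. Even the pair cancellation is stated conditionally, although it is exact and deterministic: $\delta_i(A)=0$ for $|A|\le2$ and every $\rho_i$.

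The tool you propose for passing from $L^2$ to $L^q$ also fails as stated. On $S_n$ with the uniform measure the smallest atom is $\lambda=1/n!$. Bonami's inequality for general product spaces then costs $(\sqrt{q-1}\,\lambda^{1/q-1/2})^d$, i.e.\ a factor of order $(n!)^{1/2}$ per degree when $q\asymp n\log n$. This is sharp for general functions; the centered indicator of one permutation in one block is a degree-one example. So your $\kappa$ is neither constant nor $\log n!$-dependent. Passing to continuous priorities does not help, because on atomless spaces bounded Efron--Stein degree and $L^2$ norms do not control $L^q$ norms.

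What is needed instead is an $L^\infty$-based chaos bound, as in Lemma~\ref{lem:palindrome-chaos}. This requires writing $R_\sigma-1$ as a multilinear chaos in \emph{independent} bounded centered variables. Applied at block level, with each block bounded by $\beta\le n^3/(2m^3)$ as in the paper's argument for $n<60$, it only gives $m\gtrsim n^{7/5}(\log n)^{1/5}\varepsilon^{-2/5}$. The missing factor $n^{1/4}$ comes from the $\sqrt n$ spread of the start position of the segment of $\sigma$ that lands in a given block. The paper captures it with the following ingredients:
\begin{itemize}
\item Each $\rho_i$ is represented by i.i.d.\ priorities indexed by positions of $\sigma$, so segment variables $\delta_{i,a,k}$ on disjoint segments are independent.
\item A random coloring chooses, for each block, a length $K_i$ with probability $2^{-(k-2)}$ and a residue $B_i$ modulo $K_i$. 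Every retained term is then a product of independent centered variables, at cost $\prod_j k_j/q_{k_j}$.
\item The exact coefficient formula \eqref{eq:palindrome-coefficient} turns squared coefficient sums into multinomial collision probabilities $C_d(g/M)$. Lemma~\ref{lem:palindrome-lattice} bounds the lattice sum of these by $(108M/\sqrt{d+1})^\ell$. This replaces your ``approximate factoring across blocks'' with an exact computation.
\item Terms with total support $s>n/2$ are treated separately, since there $d$ may be small and the collision saving is unavailable.
\end{itemize}
Without these ingredients, or genuine substitutes, the proposal stops exactly at the step you flag as the main obstacle.
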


The palindrome structure makes every pair of letters exactly fair within
each block.  Thus a block contributes no error unless at least three dice
choose faces in it.  Independent relabeling centers the remaining errors;
the proof controls their joint contribution, including interactions between
several occupied blocks.

Here is the exact expansion behind that argument.  Fix a target order
$\sigma$.  Let $L=(L_1,\ldots,L_m)$ have the multinomial distribution with
$n$ trials and $m$ equally likely cells, and split $\sigma$ into consecutive
segments $A_i$ of lengths $L_i$.  For an ordered list $A$ of distinct
letters, write $q_i(A)$ for the probability that these letters appear in
the prescribed order when each chooses one of its two occurrences in
block $i$.  Put
\[
  \delta_i(A)=|A|!q_i(A)-1,
\]
with $\delta_i(\varnothing)=0$.  Summing over the chosen block indices gives
the identity
\begin{equation}\label{eq:palindrome-expansion}
  n!P_W(\sigma)=\mathbb E_L\prod_{i=1}^m(1+\delta_i(A_i)).
\end{equation}
For $|A|\leq2$, $\delta_i(A)=0$ identically.  For every fixed $A$,
uniform relabeling gives $\mathbb E_{\rho_i}\delta_i(A)=0$.

The following estimate controls the whole expansion.  Its proof, given in
Appendix~\ref{app:approximate-proof}, combines hypercontractivity with a
bound on multinomial collision probabilities.  The $L^p$ norm is over
the independent random permutations defining $W$.

\begin{lemma}[Uniform moment estimate]\label{lem:palindrome-moment}
Let $n\geq3$, $2\leq p\leq n^2$, and
$m\geq8n^{11/10}p^{1/5}$.  For every $\sigma\in S_n$,
\[
  \bigl\|n!P_W(\sigma)-1\bigr\|_p
  \leq32\,\frac{n^{11/4}\sqrt p}{m^{5/2}}.
\]
\end{lemma}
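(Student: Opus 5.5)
Write $X=n!P_W(\sigma)-1$ and $\Delta=\prod_i(1+\delta_i(A_i))-1$, so that \eqref{eq:palindrome-expansion} gives $X=\mathbb E_L\Delta$. By Minkowski's inequality, $\|X\|_p\le\mathbb E_L\|\Delta\|_p$. The plan is to bound $\|\Delta\|_p$ for each fixed occupancy vector $L$ and then average over $L$.

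For fixed $L$, the segments $A_i$ are fixed. Each factor $1+\delta_i(A_i)$ depends only on $\rho_i$, so the factors are independent with mean $1$. Expanding the product, $\Delta=\sum_{\emptyset\ne T}\prod_{i\in T}\delta_i(A_i)$, where $T$ ranges only over blocks with $L_i\ge3$, because $\delta_i$ vanishes when $|A_i|\le2$. This is a multilinear polynomial in independent centered variables. I will split it by the degree $|T|=j$ and bound each degree-$j$ chaos with Bonami's hypercontractive inequality, in the form $\|\cdot\|_p\le (C\sqrt p)^{j}\|\cdot\|_2$. This is available because each $\delta_i$ is bounded: $|\delta_i(A)|\le |A|!$, and in the relevant range it is much smaller. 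The $L^2$ norm factorizes as $\|\prod_{i\in T}\delta_i\|_2=\prod_{i\in T}\|\delta_i(A_i)\|_2$.

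The single-block input is a variance bound $\|\delta_i(A)\|_2^2\lesssim c^{|A|}$, or something similarly small, for $|A|\ge3$. To get it, I will compute $q_i(A)$ explicitly. Each letter picks its occurrence in $\rho$ or in $\rho^{\mathrm{rev}}$. When $|A|=3$, $\delta$ takes values in a fixed finite set with mean zero, so its variance is $O(1)$. For larger $|A|$, $\delta$ is crudely bounded by $|A|!$. These large segments should be negligible anyway, since the probability that $L_i=\ell$ is about $(n/m)^\ell/\ell!$, and this beats $\ell!$ once $m\gg n^{1+}$.

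Putting this together gives, up to constants,
$$\mathbb E_L\|\Delta\|_p\lesssim \mathbb E_L\sum_{j\ge1}(C\sqrt p)^j\prod\ldots,$$
which is dominated by the $j=1$ term. That term is controlled by $\sqrt p\,\bigl(\mathbb E_L\sum_i\mathbf 1[L_i\ge3]\bigr)$-type quantities. The main obstacle is exactly here: a naive bound gives $\sqrt p\cdot m(n/m)^3=\sqrt p\,n^3/m^2$, which is too weak compared with the claimed $n^{11/4}\sqrt p/m^{5/2}$. The gain has to come from moving the $L$-average inside, that is, from $\|\mathbb E_L\Delta\|_2^2=\mathbb E_{L,L'}\mathbb E_\rho[\Delta_L\Delta_{L'}]$. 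The cross-correlation between $\Delta_L$ and $\Delta_{L'}$ is nonzero only when they share blocks with identical segment sets, which requires $A_i=A'_i$. So I would bound the $L^2$ norm by a multinomial collision probability: the probability that two independent multinomial occupancies produce the same triple-or-larger segment in the same block. This probability is roughly $m\cdot\Pr[\text{same }3\text{-set}]\approx m(n/m)^3\cdot n^{-1/2}\cdot(1/m)^{\ldots}$, and I expect the exponents $11/4$ and $5/2$ to emerge from optimizing this.

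For the $L^p$ version, I will apply hypercontractivity to $X$ itself, which is a multilinear polynomial in the $\rho_i$ after averaging over $L$. Its degree is unbounded, so I will truncate at degree $j_0$. The hypothesis $m\ge8n^{11/10}p^{1/5}$ should be exactly what makes the geometric series over $j$ converge, with ratio $\lesssim p\,n^{11/4}/m^{5/2}\cdot(\ldots)\le1/2$. With that, the final constant $32$ comes out of summing the geometric series.
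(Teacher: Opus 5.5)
Your diagnosis of the gain is right. The \(n^{-1/4}\) factor is a multinomial collision effect that appears only after the \(L\)-average is moved inside the norm. Heuristically \(\|X\|_2^2\approx m(n/m)^6n^{-1/2}\), which gives \(n^{11/4}/m^{5/2}\) directly, without any optimization.

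One supporting claim is false, however. Cross-correlations do not vanish unless \(A_i=A_i'\), because overlapping segments of one block are dependent. For three letters, \(\delta_i(x,y,z)=\tfrac12-\tfrac32\mathbf 1[y\text{ has the smallest }\rho_i\text{-rank among }x,y,z]\). For \(A=(\sigma_1,\sigma_2,\sigma_3)\) and \(A'=(\sigma_2,\sigma_3,\sigma_4)\) the two events are disjoint, so \(\mathbb E\,\delta_i(A)\delta_i(A')=-\tfrac14\). In an \(L^2\) computation this changes only constants, but this dependence is what blocks the step you leave open.

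The genuine gap is the passage from \(L^2\) to \(L^p\). Applying hypercontractivity to \(X\) as a multilinear function of the blocks \(\rho_i\) loses exactly the gain you want.

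\begin{itemize}
\item The degree-one part of \(X\) is \(\sum_iG_i(\rho_i)\) with \(G_i=\sum_{a,k}c(i,k,a)\,\delta_{i,a,k}\).
\item For a typical block, \(\|G_i\|_2\) is of order \((n/m)^3n^{-1/4}\).
\item In contrast, \(\|G_i\|_\infty\geq\tfrac12\Pr(L_i=3)\), which is of order \((n/m)^3\): take \(\rho_i=\sigma\), so every segment is unimodal and every \(\delta_{i,a,k}\geq\tfrac12\).
\end{itemize}

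Boundedness of the \(\delta_i\) justifies only the sup-norm chaos inequality of Lemma~\ref{lem:palindrome-chaos}. An \(L^2\) form \(\|\cdot\|_p\le(C\sqrt p)^j\|\cdot\|_2\) with a universal \(C\) is false for general functions of an \(S_n\)-valued coordinate. With block-level sup norms you get \(\sqrt{pm}\,(n/m)^3=\sqrt p\,n^3/m^{5/2}\), which is a factor \(n^{1/4}\) too large. This is the crude estimate the paper uses only for \(n<60\).

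To reach the stated bound, you need a chaos in finer, genuinely independent variables. The paper gets one by random thinning. Independently for each block, it draws a length \(K_i\) with probability \(q_k=2^{-(k-2)}\) and a residue \(B_i\bmod K_i\). It keeps only segments of that length starting in that residue class; these use disjoint priorities and so are independent. It then reweights by \(\prod_jk_j/q_{k_j}\), applies Lemma~\ref{lem:palindrome-chaos}, and bounds the coefficient square sum with the collision estimate of Lemma~\ref{lem:palindrome-lattice}.

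That collision estimate needs many unassigned letters, namely \(d=n-s\) with \(M\ge d+1\). So terms whose segments cover more than half of \(\sigma\) need a separate argument, the paper's \(\theta^{s-n/2}\) weighting. Your geometric series over the degree does not supply it.
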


\begin{proof}[Proof of Theorem~\ref{thm:approximate}]
Set
\[
  p=\max\left\{2,\frac{\log(2n!)}{\log3}\right\}.
\]
For $n\geq3$, we have $p\leq n^2$ and $p=O(n\log n)$.
Choosing the absolute constant $C$ sufficiently large ensures
$m\geq8n^{11/10}p^{1/5}\varepsilon^{-2/5}$.
Lemma~\ref{lem:palindrome-moment} then gives, for every $\sigma$,
\[
  \bigl\|n!P_W(\sigma)-1\bigr\|_p
  \leq\frac{32}{8^{5/2}}\varepsilon<\frac\varepsilon3.
\]
Markov's inequality and a union bound imply
\[
  \Pr\!\left(\exists\sigma\in S_n:
      |n!P_W(\sigma)-1|>\varepsilon\right)
  \leq n!3^{-p}\leq\frac12.
\]
Thus a single sampled collection satisfies all the required inequalities
with the stated probability.
\end{proof}

\subsection{A lower bound for total size}

For fixed error, quadratic total length is necessary even if only the
first-place marginals are required to be approximately fair.  Suppose
each die is first with probability at most $(1+\varepsilon)/n$.
Order the letters as $i_1,\ldots,i_n$ according to the positions
$p_1<\cdots<p_n$ of their first occurrences.  Let $E_k$ be the event
that every earlier letter $i_j$, $j<k$, chooses an occurrence after
$p_k$.  If $E_k$ fails, the first selected occurrence belongs to one
of those $k-1$ letters.  Consequently,
\[
  \Pr(E_k)\geq1-\frac{(k-1)(1+\varepsilon)}n.
\]
The choice of a face of $i_k$ is independent of $E_k$.  If $i_k$
chooses its first occurrence and $E_k$ holds, then $i_k$ is first.
Writing $m_i$ for the face counts gives
\begin{equation}\label{eq:approx-first-lower}
  m_{i_k}\geq
  \max\left\{0,\frac{n}{1+\varepsilon}-(k-1)\right\}.
\end{equation}
Summing over $k$ yields total length $\Omega(n^2)$ for fixed
$\varepsilon$.  Condition~\eqref{eq:pointwise-approx} implies the marginal
hypothesis by summing over the orders in which a specified die is first.
Theorem~\ref{thm:approximate} gives total length
$O(n^{23/10}(\log n)^{1/5})$ for fixed positive error, leaving a factor
$O(n^{3/10}(\log n)^{1/5})$ between these upper and lower bounds.

\paragraph{Acknowledgments.}
We are especially grateful to Austin Conner, who devised the first version
of our second construction (Section~\ref{sec:second}) using polynomial
interpolation.
We thank Florian Haeupler for suggesting the problem, and James Grime,
Eric Harshbarger, Michael Purcell, Josef Tkadlec, and commenters on the
Polylog video~\cite{polylog-video} for helpful discussions.  T.G. acknowledges
the support of Charles University grant PRIMUS/22/HUM/020.
V.R. was supported by the Czech Science Foundation (GA\v{C}R), JUNIOR STAR
grant no.~26-23599M.

\paragraph{AI use.}
We collaborated with OpenAI's GPT models and Anthropic's Claude, which supplied
most of the proofs outside Section~\ref{sec:prelim}. The authors take responsibility for the
mathematical claims and the final text.

\clearpage
\appendix
\section{The moment estimate for random palindrome blocks}
\label{app:approximate-proof}

We prove Lemma~\ref{lem:palindrome-moment}.  Throughout this appendix,
$n\geq3$, $2\leq p\leq n^2$, and
$m\geq8n^{11/10}p^{1/5}$.  All norms and expectations over blocks refer
to the independent uniform permutations in
\eqref{eq:random-palindrome-word}, unless another random variable is
indicated explicitly.

\subsection{Two auxiliary estimates}

\begin{lemma}[Bounded multilinear chaos]\label{lem:palindrome-chaos}
Let $(Z_v)$ be independent centered random variables with $|Z_v|\leq b_v$.
If
\[
  T=\sum_{|A|=\ell}c_A\prod_{v\in A}Z_v,
\]
then, for $p\geq2$,
\[
  \|T\|_p\leq(2\sqrt{p-1})^\ell
     \left(\sum_{|A|=\ell}c_A^2\prod_{v\in A}b_v^2\right)^{1/2}.
\]
\end{lemma}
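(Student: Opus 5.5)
The plan is to reduce to Bonami's hypercontractive inequality for bounded independent variables by normalizing and symmetrizing. First, replace each $Z_v$ by $Z_v/b_v$ and $c_A$ by $c_A\prod_{v\in A}b_v$. This does not change $T$. It reduces the claim to the case $b_v=1$, where the target bound becomes $\|T\|_p\leq(2\sqrt{p-1})^\ell\|c\|_2$. (If some $b_v=0$ then $Z_v=0$ and those terms can simply be dropped.)

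Next comes symmetrization. Let $(Z'_v)$ be an independent copy of $(Z_v)$ and $(\varepsilon_v)$ independent Rademacher signs. I would use the standard decoupling-free symmetrization for multilinear forms in independent centered variables, applied one coordinate at a time. Because $T$ is affine in each $Z_v$ and $\mathbb E Z'_v=0$, Jensen's inequality gives
\[
\|T(Z)\|_p\leq\|T(Z-Z')\|_p,
\]
conditioning on all the other coordinates. Applying this to every coordinate in turn replaces each $Z_v$ by $Z_v-Z'_v$. This variable is symmetric, so it has the same law as $\varepsilon_v(Z_v-Z'_v)$. The result is
\[
\|T\|_p\leq\Bigl\|\sum_{|A|=\ell}c_A\prod_{v\in A}\varepsilon_v Y_v\Bigr\|_p,\qquad Y_v=Z_v-Z'_v,\ |Y_v|\leq2.
\]

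Now condition on $Y$. The expression is then a homogeneous Rademacher chaos of degree $\ell$ with coefficients $c_A\prod_{v\in A}Y_v$. Bonami's hypercontractive inequality (which the paper takes as a classical input) says that a degree-$\ell$ multilinear polynomial $f$ in Rademacher variables satisfies $\|f\|_p\leq(\sqrt{p-1})^\ell\|f\|_2$. Here the $L^2$ norm is the coefficient $\ell^2$ norm, and that is at most $2^\ell\|c\|_2$ because $|Y_v|\leq2$. Raising to the $p$-th power, integrating over $Y$, and taking $p$-th roots gives $(2\sqrt{p-1})^\ell\|c\|_2$, which is the claim after undoing the normalization.

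The main obstacle is the symmetrization step. Because every monomial is multilinear in distinct variables, the coordinatewise Jensen argument works without decoupling constants. Still, I must check that conditioning on the remaining coordinates keeps $T$ affine in the single variable being replaced, and that the independent copies of the other coordinates may be introduced sequentially. Everything else reduces to Bonami's inequality and bookkeeping of the factor $2^\ell$.
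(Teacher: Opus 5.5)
Your proposal is correct and follows essentially the same route as the paper. You symmetrize with an independent copy via Jensen, insert Rademacher signs using the symmetry of $Z_v-Z_v'$, apply Bonami's degree-$\ell$ inequality conditionally on the pairs, and use $|Z_v-Z_v'|\le 2b_v$ to bound the coefficient norm. The normalization to $b_v=1$ and the coordinate-by-coordinate Jensen step are cosmetic; the paper symmetrizes in one step, since multilinearity and independence give $\mathbb{E}[T(Z-Z')\mid Z]=T(Z)$ directly.
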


\begin{proof}
Introduce independent copies $Z_v'$ and replace every $Z_v$ in $T$ by
$Z_v-Z_v'$.  Conditional expectation over the copies recovers $T$, so
Jensen's inequality bounds the original norm by the new one.  Swapping
each pair $(Z_v,Z_v')$ independently preserves their joint law and changes
the corresponding difference by a Rademacher sign.  Conditional on the
pairs, the degree-$\ell$ Rademacher hypercontractive inequality
\cite{bonami1970,odonnell2014} bounds the $L^p$ norm over these signs by
$(p-1)^{\ell/2}$ times the coefficient $\ell_2$ norm.  Since
$|Z_v-Z_v'|\leq2b_v$, this gives the displayed bound uniformly in the
pairs, and hence also after averaging over them.
\end{proof}

For a probability vector $u=(u_0,\ldots,u_\ell)$, let
\[
  C_d(u)=\sum_{a_0+\cdots+a_\ell=d}
       \left(\frac{d!}{a_0!\cdots a_\ell!}
                    \prod_{j=0}^{\ell}u_j^{a_j}\right)^2
\]
be the collision probability of the multinomial distribution with $d$
trials and cell probabilities $u$.  We use the convention $0^0=1$.

\begin{lemma}[A lattice collision bound]\label{lem:palindrome-lattice}
For integers $\ell\geq1$, $d\geq0$, and $M\geq d+1$,
\[
  \sum_{g_0+\cdots+g_\ell=M}C_d(g/M)
  \leq\left(\frac{108M}{\sqrt{d+1}}\right)^\ell,
\]
where the sum is over nonnegative integer vectors $g$.
\end{lemma}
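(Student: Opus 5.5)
The plan is to separate the bound into a pointwise estimate for one multinomial collision probability and a summation over lattice points. The pointwise target is
\[
  C_d(u)\leq\prod_{j\neq j_0}\min\Bigl\{1,\frac{c}{\sqrt{(d+1)u_j}}\Bigr\},
\]
where \(j_0\) is an index of a largest cell and \(c\) is an absolute constant. Given this, the lattice sum splits coordinatewise. First choose which index is \(j_0\); there are \(\ell+1\leq2^\ell\) choices. The coordinate \(g_{j_0}\) is then determined by the others, since the entries of \(g\) sum to \(M\). So the sum is at most
\[
  2^\ell\Bigl(\sum_{h=0}^{M}\min\Bigl\{1,c\sqrt{\tfrac{M}{(d+1)h}}\Bigr\}\Bigr)^{\ell}
  \leq2^\ell\Bigl(1+2c\,\frac{M}{\sqrt{d+1}}\Bigr)^{\ell}.
\]
Here we use \(\sum_{h=1}^M h^{-1/2}\leq2\sqrt M\). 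Since \(M\geq d+1\), we have \(1\leq M/\sqrt{d+1}\). The whole expression is therefore at most \((2(1+2c)M/\sqrt{d+1})^\ell\), and \(108\) leaves room for a crude value of \(c\).

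For the pointwise estimate I would write the multinomial as a chain of binomials. Relabel so that \(u_1\leq u_2\leq\cdots\leq u_\ell\leq u_0\), with the largest cell last. Then \(X_1\sim\mathrm{Bin}(d,p_1)\). Given \(X_1,\ldots,X_{j-1}\), the next count is \(X_j\sim\mathrm{Bin}(n_j,p_j)\), where \(n_j=d-X_1-\cdots-X_{j-1}\) and \(p_j=u_j/(u_j+\cdots+u_\ell+u_0)\). Because every later cell is at least as large as cell \(j\), we get \(u_j\leq p_j\leq1/2\), and hence \(p_j(1-p_j)\geq u_j/2\). For two independent copies, conditioning successively gives
\[
  C_d(u)\leq\prod_j\max\Pr(X_j=Y_j\mid\text{past}),
\]
because \(\sum_x P(x)^2\,Q_x\leq(\sum_x P(x)^2)\max_x Q_x\). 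Each factor is at most the largest point mass of \(\mathrm{Bin}(n_j,p_j)\), which is at most \(\min\{1,c'/\sqrt{n_jp_j(1-p_j)}\}\).

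The main obstacle is that \(n_j\) is random, so the maximum over the past should not be taken naively. I would avoid this in one of two ways. The first is to restrict to the event that every \(n_j\) is at least half its mean. Its mean is \(d(u_j+\cdots+u_0)\geq d/2\) times the remaining mass, which is \(\geq du_j\). Off that event, a Chernoff bound controls the contribution, and its exponentially small size is absorbed into the constant. The second, and cleaner, route is to Poissonize. Replace \(d\) by a \(\mathrm{Poisson}(\lambda)\) number of trials with \(\lambda\asymp d+1\), so that the cell counts are independent \(\mathrm{Poisson}(\lambda u_j)\). Their collision probabilities are each \(\lesssim\min\{1,(\lambda u_j)^{-1/2}\}\). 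Then transfer back to fixed \(d\) using monotonicity of \(C_d\) in \(d\), which would need checking, or using the local CLT for the total count. I would try the truncation argument first, since it only uses the binomial pmf bound. The \(d=0\) case is trivial, since \(C_0=1\) and the lattice has at most \((M+1)^\ell\leq(2M)^\ell\) points.
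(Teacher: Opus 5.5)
Your summation step is correct and matches the paper's: charge each $g$ to a largest coordinate, drop the constraint on the other $\ell$ coordinates, and compare with an integral. The gap is the pointwise estimate, which carries all the content. Neither route you sketch establishes it as stated.

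In the truncation route, the step \emph{its exponentially small size is absorbed into the constant} fails for large $\ell$. You bound the off-event part of $\Pr(X=Y)$ by the probability of the bad event, $\sum_j e^{-\Omega(\mathbb{E}n_j)}$. That discards the collision factors of all the other coordinates, and the target can be far smaller than this tail. Take $u$ uniform and $d=K(\ell+1)$ with $K$ a large constant. Then $\mathbb{E}n_\ell=2K$, so $\Pr(n_\ell<K)=e^{-\Theta(K)}$ for every $\ell\ge2$, while your target is about $(c^2/K)^{\ell/2}$. What truncation actually needs is that bad pasts are rare under the collision-tilted measure:
\[
  \sum_{x_{<j}\ \mathrm{bad}}P(x_{<j})^2\lesssim\frac{1}{\sqrt{du_j}}\sum_{x_{<j}}P(x_{<j})^2 .
\]
A Chernoff bound for $P$ itself does not give this. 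Bounding the left side by $\Pr(\mathrm{bad})\max P$ just requires a point-mass bound for the marginal multinomial, which is exactly the kind of estimate the lemma is about.

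The Poissonization route is the right idea; it is the paper's. However, the transfer back to fixed $d$ is not lossless, and you do not account for the loss. Monotonicity does hold, since $C_d(u)=(2\pi)^{-\ell-1}\int_{[-\pi,\pi]^{\ell+1}}\bigl|\sum_ju_je^{it_j}\bigr|^{2d}\,dt$. But $\Pr(X=Y)=\sum_{d'}\Pr(N=d')^2C_{d'}(u)$ then yields only a bound of the form $C_d(u)\le C\sqrt{d+1}\prod_{j=0}^{\ell}\min\{1,c(\lambda u_j)^{-1/2}\}$. The product runs over all $\ell+1$ cells, and there is a prefactor $\sqrt{d+1}$.

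The missing step is to spend the largest cell's factor on that prefactor. Since $u_{j_0}\ge1/(\ell+1)$, one has $\sqrt{d+1}\,(1+du_{j_0})^{-1/2}\le\sqrt{\ell+1}$. So this route proves your target only up to an extra factor $O(\sqrt{\ell+1})$. That is harmless after the lattice summation, since $(\ell+1)^{3/2}\le3^\ell$, but it is not the clean pointwise bound you stated.

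The paper needs neither monotonicity nor a local CLT. It writes the multinomial law as independent $\mathrm{Poisson}(du_j)$ variables conditioned on their sum being $d$, so that
\[
  C_d(u)\le\max_a\Pr(a)\le\frac{\prod_{j}\max_k\Pr(\mathrm{Poisson}(du_j)=k)}{\Pr(\mathrm{Poisson}(d)=d)}\le3\cdot2^{\ell+1}\sqrt{d+1}\prod_{j=0}^{\ell}(1+du_j)^{-1/2}.
\]
It then applies exactly the largest-coordinate cancellation above.
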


\begin{proof}
Elementary Stirling bounds give
\[
  \max_k\Pr(\operatorname{Poisson}(\lambda)=k)
     \leq\frac2{\sqrt{1+\lambda}},\qquad
  \Pr(\operatorname{Poisson}(d)=d)
     \geq\frac1{3\sqrt{d+1}}.
\]
The cases $\lambda\leq1$ and $d=0$ are immediate.  For $\lambda>1$,
the maximum is attained at $k=\lfloor\lambda\rfloor$, with
$k\geq\lambda/2$, and $k!\geq\sqrt{2\pi k}(k/e)^k$ gives the first
inequality.  For integer $d\geq1$,
$d!\leq e\sqrt d(d/e)^d$ gives the second.

Condition independent Poisson variables of means $du_j$ on their sum
being $d$.  Since a collision probability is at most the maximum
probability mass, the preceding bounds give
\[
  C_d(u)\leq3\,2^{\ell+1}\sqrt{d+1}
                   \prod_{j=0}^{\ell}(1+du_j)^{-1/2}.
\]
For every $g$ in the sum, some coordinate satisfies
$g_j\geq M/(\ell+1)$.  For that coordinate,
\[
  \sqrt{d+1}(1+dg_j/M)^{-1/2}\leq\sqrt{\ell+1}.
\]
Sum over the $\ell+1$ possible choices of this coordinate, then drop
the sum constraint on the remaining $\ell$ coordinates.  Each of them
lies between $0$ and $M$.  Since $w(x)=(1+dx/M)^{-1/2}$ is decreasing,
\[
  \sum_{g=0}^M w(g)
  \leq1+\int_0^M w(x)\,dx
  \leq1+\frac{2M}{\sqrt{d+1}}
  \leq\frac{3M}{\sqrt{d+1}}.
\]
The integral bound also holds for $d=0$; the last inequality uses
$M\geq d+1$.  Therefore the sum in the lemma is at most
\[
  6(\ell+1)^{3/2}
       \left(\frac{6M}{\sqrt{d+1}}\right)^\ell
  \leq\left(\frac{108M}{\sqrt{d+1}}\right)^\ell.
\]
For the final inequality, use $\ell+1\leq2^\ell$ and
$6\leq6^\ell$.
\end{proof}

\subsection{The expansion and a coloring argument}

Fix $\sigma\in S_n$ and write $R_\sigma=n!P_W(\sigma)$.  We use the
quantities $\delta_i(A)$ in \eqref{eq:palindrome-expansion}.  A prescribed
order of $k$ distinct letters occurs at most twice as a subsequence of
a palindrome $\rho_i\rho_i^{\mathrm{rev}}$.  Indeed, in the ordering
defined by $\rho_i$, the prescribed order must increase and then
decrease, and the only possible ambiguity is which occurrence of its
largest letter is chosen.  Hence, for $k\geq3$,
\begin{equation}\label{eq:palindrome-delta-bound}
  |\delta_i(A)|\leq b_k:=k!2^{1-k}.
\end{equation}
Here $q_i(A)\leq2^{1-k}$ and $b_k\geq1$.

Represent $\rho_i$ by the relative order of independent continuous
priorities $Y_{i,1},\ldots,Y_{i,n}$, indexed by positions in $\sigma$.
Write $\delta_{i,a,k}$ for the segment starting at position $a$ and
having length $k$.  For fixed $k$ and a fixed residue class of $a$
modulo $k$, these variables are independent: their segments use disjoint
sets of priorities.  Different blocks are independent as well.

Expanding \eqref{eq:palindrome-expansion}, the part involving exactly
$\ell$ nonconstant factors has the form
\[
  T_\ell=\sum_{I,\boldsymbol{k},\boldsymbol{a}}
        c(I,\boldsymbol{k},\boldsymbol{a})
                 \prod_{j=1}^{\ell}\delta_{i_j,a_j,k_j},
  \qquad I=\{i_1<\cdots<i_\ell\},\quad k_j\geq3.
\]
The coefficients are nonnegative probabilities of the corresponding
occupancies and segment starts.  The selected segments are disjoint and
appear in their natural order.  The sum of these homogeneous parts is
$R_\sigma-1$.

For any fixed restriction $\mathcal D$ on these indices, we claim that
\begin{equation}\label{eq:palindrome-colored}
  \|T_\ell^{\mathcal D}\|_p
  \leq(2\sqrt{p-1})^\ell
  \left[
    \sum_{(I,\boldsymbol{k},\boldsymbol{a})\in\mathcal D}
       c(I,\boldsymbol{k},\boldsymbol{a})^2
           \prod_{j=1}^{\ell}\frac{k_jb_{k_j}^2}{q_{k_j}}
  \right]^{1/2},
  \qquad q_k=2^{-(k-2)}\quad(k\geq3).
\end{equation}
Independently for each block, choose a size $K_i$ with probabilities
$q_k$, and a uniform residue $B_i$ modulo $K_i$.  Keep only terms with
$k_j=K_{i_j}$ and $a_j\equiv B_{i_j}\pmod{k_j}$, multiplying their
coefficients by $\prod_j k_j/q_{k_j}$.  Averaging over these auxiliary
choices recovers the original sum.  Given the choices, all remaining
variables are independent and centered, so
Lemma~\ref{lem:palindrome-chaos} applies.  Minkowski's inequality followed
by Jensen's inequality for the square root proves
\eqref{eq:palindrome-colored}: a term is retained with probability
$\prod_jq_{k_j}/k_j$.  A choice $K_i>n$ simply retains no terms from
block $i$.

We next compute the coefficients.  For fixed $I$ and
$\boldsymbol{k}$, put $s=\sum_jk_j$, $d=n-s$, and
\[
  g_0=i_1-1,\qquad
  g_j=i_{j+1}-i_j-1\ (1\leq j<\ell),\qquad
  g_\ell=m-i_\ell,\qquad M=m-\ell.
\]
The unspecified blocks lie in these $\ell+1$ gaps and
$\sum_jg_j=M$.  The segment starts are equivalently specified by the
numbers $a_0,\ldots,a_\ell$ of remaining letters assigned to those gaps,
where $\sum_ja_j=d$.  Direct summation of the multinomial law gives
\begin{equation}\label{eq:palindrome-coefficient}
  c(I,\boldsymbol{k},\boldsymbol{a})
  =\frac{\fall ns}{m^s\prod_jk_j!}
      \left(\frac Mm\right)^d
      \frac{d!}{a_0!\cdots a_\ell!}
                     \prod_{j=0}^{\ell}\left(\frac{g_j}{M}\right)^{a_j}.
\end{equation}
Here and below $\boldsymbol a$ denotes gap counts rather than the
equivalent segment starts.  The map $I\mapsto\boldsymbol g$ is a
bijection between $\ell$-subsets of $[m]$ and weak compositions of
$m-\ell$ into $\ell+1$ parts.  Since $m\geq n$ and
$\ell\leq n/3$, we always have $M>0$.

\subsection{Terms with short total support}

Split the expansion into terms with $s\leq n/2$ and $s>n/2$, denoting
their sums by $T^{\mathrm{short}}$ and $T^{\mathrm{long}}$.  In the
first case, $d\geq n/2$ and
$M=m-\ell\geq d+2\ell\geq d+1$.
Using \eqref{eq:palindrome-coefficient},
\eqref{eq:palindrome-delta-bound}, and $\fall ns\leq n^s$, the square
sum in \eqref{eq:palindrome-colored} for this part is at most
\[
  \sum_{\boldsymbol{k}:\,s\leq n/2}
       \prod_{j=1}^{\ell}
          \left[\left(\frac nm\right)^{2k_j}
                       2^{2-2k_j}\frac{k_j}{q_{k_j}}\right]
       \sum_{g_0+\cdots+g_\ell=M}C_d(g/M).
\]
We have also used $(M/m)^{2d}\leq1$.
Lemma~\ref{lem:palindrome-lattice} bounds the collision sum by
$(160m/\sqrt n)^\ell$, because $108\sqrt2<160$.
Extend the length sum to all $k_j\geq3$ to separate it into a product.
For $z=n^2/(2m^2)\leq1/2$,
\[
  \sum_{k\geq3}\left(\frac nm\right)^{2k}
                   2^{2-2k}\frac{k}{q_k}
  =\sum_{k\geq3}kz^k
  =\frac{z^3(3-2z)}{(1-z)^2}
  \leq8z^3=\frac{n^6}{m^6}.
\]
Consequently, if
\[
  a=2\sqrt{160}\,\frac{n^{11/4}\sqrt p}{m^{5/2}},
\]
then
\begin{equation}\label{eq:palindrome-short}
  \|T^{\mathrm{short}}\|_p
  \leq\sum_{\ell\geq1}a^\ell=\frac a{1-a}
  \qquad(a<1).
\end{equation}

\subsection{Terms with long total support}

For $s>n/2$, bound each collision probability by one and sum over $I$,
using $\binom m\ell\leq m^\ell/\ell!$.
For any $\theta\geq1$, the factor $\theta^{s-n/2}$ is at least one.
Inserting this factor into the nonnegative square sum in
\eqref{eq:palindrome-colored} gives
\[
  \|T_\ell^{\mathrm{long}}\|_p
  \leq\theta^{-n/4}\frac{A_\theta^\ell}{\sqrt{\ell!}},
  \qquad
  A_\theta=2\sqrt{pm}
       \left(\sum_{k\geq3}k
                  \left(\frac{\theta n^2}{2m^2}\right)^k\right)^{1/2}.
\]
Set $t=m/n\geq1$ and $\theta=t^{1/3}$.  The power-series argument
is $1/(2t^{5/3})\leq1/2$, and hence
\[
  A_\theta^2\leq4pm\,t^{-5}=\frac{4pn}{t^4}.
\]
Cauchy--Schwarz gives
\[
  \sum_{\ell\geq0}\frac{A^\ell}{\sqrt{\ell!}}
  \leq
  \left(\sum_{\ell\geq0}2^{-\ell}\right)^{1/2}
  \left(\sum_{\ell\geq0}\frac{2^\ell A^{2\ell}}{\ell!}\right)^{1/2}
  =\sqrt2e^{A^2}.
\]
Therefore
\begin{equation}\label{eq:palindrome-long}
  \|T^{\mathrm{long}}\|_p
  \leq\sqrt2\exp\left(-\frac n{12}\log t+\frac{4pn}{t^4}\right).
\end{equation}

\subsection{Completion of the moment bound}

Put
\[
  H=\frac{n^{11/4}\sqrt p}{m^{5/2}}.
\]
The assumption on $m$ gives $H\leq8^{-5/2}$, $a\leq1/6$, and
$t\geq8n^{1/10}p^{1/5}\geq8$.  Thus
\eqref{eq:palindrome-short} yields
\[
  \|T^{\mathrm{short}}\|_p
  \leq\frac65a=\frac{12}5\sqrt{160}\,H.
\]
Moreover, since $p\leq n^2$,
\[
  \frac{4p}{t^4}
  \leq\frac1{1024}\left(\frac p{n^2}\right)^{1/5}
  \leq\frac1{1024}\leq\frac{\log t}{24}.
\]
For $n\geq60$, \eqref{eq:palindrome-long} consequently gives
\[
  \|T^{\mathrm{long}}\|_p
  \leq\sqrt2\,t^{-n/24}
  \leq\sqrt2\,t^{-5/2}
  \leq H,
\]
where the last inequality uses $H=n^{1/4}\sqrt p\,t^{-5/2}$.
Combining the two parts gives
$\|R_\sigma-1\|_p\leq32H$ for $n\geq60$.

For completeness, the remaining $3\leq n<60$ are covered uniformly
by a simpler estimate that keeps each entire block as one variable.
Let $F_I$ be the part of the product expansion using exactly the
nonconstant factors indexed by $I$.  This is a function of those blocks,
centered separately in each block.  The selected-cell multinomial
marginal and \eqref{eq:palindrome-delta-bound} give
\[
  \|F_I\|_\infty\leq\beta^{|I|},\qquad
  \beta=\sum_{k\geq3}2^{1-k}(n/m)^k
       =\frac{n^3}{4m^3(1-n/(2m))}
       \leq\frac{n^3}{2m^3}.
\]
Indeed, the joint probability of selected occupancies $k_i$ with sum
$s$ is bounded by
\[
  \frac{n^s}{m^s\prod_i k_i!}.
\]
The factorials cancel against \eqref{eq:palindrome-delta-bound}, and
extending the sum to arbitrary $k_i\geq3$ gives the product above.

To bound the sum over $|I|=\ell$, introduce an independent copy of each
block.  Replace $F_I$ by its alternating difference in its $\ell$
arguments, denoted by $\Delta_I F_I$.  Separate centering implies
that averaging over the copies recovers $F_I$, while
$|\Delta_I F_I|\leq2^\ell\beta^\ell$.  Independently swapping each
block with its copy multiplies $\Delta_I F_I$ by the product of the
corresponding Rademacher signs.  Conditional hypercontractivity,
followed by the uniform bound on these coefficients, gives
\[
  \left\|\sum_{|I|=\ell}F_I\right\|_p
  \leq(2\sqrt{p-1})^\ell\binom m\ell^{1/2}\beta^\ell
  \leq\frac{b^\ell}{\sqrt{\ell!}},
  \qquad b=2\beta\sqrt{mp}.
\]
Since $b\leq n^{1/4}H\leq60^{1/4}8^{-5/2}<1/4$, we obtain
\[
  \|R_\sigma-1\|_p
  \leq\sum_{\ell\geq1}\frac{b^\ell}{\sqrt{\ell!}}
  \leq\frac b{1-b}
  \leq\frac43\,60^{1/4}H<4H.
\]
This proves Lemma~\ref{lem:palindrome-moment} for all $n\geq3$.

\begingroup
\sloppy
\bibliographystyle{unsrtnat}
\bibliography{ref}
\endgroup

\end{document}